	\def\?[#1]{\textbf{[#1]}\marginpar{\Large{\textbf{??}}}}%
\numberwithin{equation}{section}
\newcommand{\be}{\begin{equation}}
\newcommand{\ee}{\end{equation}}
\newcommand{\ov}{\overline}
\newcommand{\ud}{\underline}
\newcommand{\D}{{\mathcal D}}
\newcommand{\Div}{{\mathrm{div}}}
\newcommand{\op}{{\mathrm{Op}}}
\theoremstyle{plain}
\newtheorem{thm}{Theorem}
\newtheorem{prop}{Proposition}[section]
\newtheorem{lem}[prop]{Lemma}
\newtheorem{definition}[prop]{Definition}
\theoremstyle{definition}
\newtheorem{rem}[prop]{Remark}
\numberwithin{equation}{section}
\def\squarebox#1{\hbox to #1{\hfill\vbox to #1{\vfill}}}
\newcommand{\noi}{\noindent}
\title[Kelvin-Voigt damped wave equation]
{Decay for the Kelvin-Voigt damped wave equation: Piecewise smooth damping}
\author[N. Burq]{Nicolas Burq}
\address{Universit\'e  Paris-Saclay, Laboratoire de math\'ematiques d'Orsay, UMR 8628 du CNRS, B\^at. 307, 91405 Orsay Cedex, France and Institut Universitaire de France}
\email{nicolas.burq@universite-paris-saclay.fr}
\author[C-M. Sun]{Chenmin Sun}
\address{Universit\'e de Cergy-Pontoise, Laboratoire de Math\'ematiques AGM, UMR  8088 du CNRS, 2 av. Adolphe Chauvin
95302 Cergy-Pontoise Cedex, France }
\email{chenmin.sun@u-cergy.fr}
\def\11{{\rm 1~\hspace{-1.4ex}l} }
\def\R{\mathbb R}
\def\Z{\mathbb Z}
\def\N{\mathbb N}
\begin{document}

	\begin{abstract}
	We study the energy decay rate of the Kelvin-Voigt damped wave equation with piecewise smooth damping on the multi-dimensional domain. Under suitable geometric assumptions on the support of the damping, we obtain the optimal polynomial decay rate which turns out to be different from the one-dimensional case studied in \cite{LR05}. This optimal decay rate is saturated by  high energy quasi-modes localised on geometric optics rays  which hit the interface along non orthogonal neither tangential directions. The proof uses semi-classical analysis of boundary value problems.
	\end{abstract}   
	
	\maketitle 
	\setlength{\parskip}{0.3em}  
	
	\section{Introduction}   
	\label{in}
\subsection{Kelvin-Voigt damped wave equation}
In this article, we study the decay rate of the Kelvin-Voigt damped wave equation on the multi-dimensional bounded domain $\Omega\subset \R^d$, $d\geq 2$:
	\begin{equation}\label{KVDW-main}
\left\{
\begin{aligned}
	&\big(\partial_t^2-\Delta-\mathrm{div}a(x)\nabla\partial_t\big)u=0,\quad (t,x)\in\R_t\times\Omega,\\
	&u(t,\cdot)|_{\partial\Omega}=0,\\
	& (u,\partial_tu)|_{t=0}=(u_0,u_1)
\end{aligned}
\right.
\end{equation}
The damping $a(x)\geq 0$ is assumed to be piecewise smooth.
Denote by $\mathcal{H}^1=H_0^1\times L^2$. The solution of \eqref{KVDW-main} can be written as  
$$ U(t)=\binom{u(t)}{\partial_tu(t)}=e^{t\mathcal{A}}\binom{u_0}{u_1},
$$
where the generator
\begin{equation}\label{A}
\mathcal{A}=\left(\begin{matrix}
0   &1    \\
\Delta   &\mathrm{div}a\nabla
\end{matrix}\right)  
\end{equation}
with domain
$$ \mathcal{D}(\mathcal{A})=\{(u_0,u_1)\in H_0^1\times L^2: \Delta u_0+\mathrm{div} a\nabla u_1\in L^2, u_1\in H_0^1  \}.
$$
Note that the energy
$$ E[u](t)=\frac{1}{2}\|e^{t\mathcal{A}}(u_0,u_1)\|_{\mathcal{H}^1}^2 =\frac{1}{2}\int_{\Omega}\big(|\partial_tu|^2+|\nabla u|^2 \big)dx
$$
satisfies 
$$ E[u] (t) - E[u] (0) = - \int_{0}^t\int_{\Omega} a(x) |\nabla_x \partial_t u |^2 (s,x) ds$$

It was proved in \cite{BC15} and \cite{BS} (see also \cite{LR06},\cite{Te16} for related results) that if $a$ is smooth, vanishing nicely and the region $\{x\in\Omega: a(x)>0\}$ controls geometrically $\Omega$, then the rate of decay of the energy is exponential:
$$ E[u](t)\leq C\mathrm{e}^{-ct}E[u](0).
$$ 
In this article, we investigate the different case where the damping $a(x)$ is piecewise smooth and has a jump across some hypersurface $\Sigma\subset\Omega$. Unlike the smooth damping vanishing nicely, the problem with piecewise damping can be seen as an elliptic-hyperbolic transmission system on the two sides of the interface $\Sigma$ connected by some transmission condition. The interface becomes a wall to reduce the energy transmission from the hyperbolic region to the damped region. This phenomenon is known as {\em overdamping}. It turns out that this discontinuous Kelvin-Voigt damping $\nabla\cdot(a(x)\nabla\partial_t u)$ does not follow the principle that the ``geometric control condition" implies the exponential stabilization, which holds for the wave equation with localized viscous damping $a(x)\partial_tu$ (see~\cite{LR05, Zh18} for results using the multiplier methods)

\subsection{The main result}
To state our main result, we first make some geometric assumptions.
Let $\Omega\subset \R^d$ with $d\geq 2$. We consider the piecewise smooth damping $a\in C^{\infty}(\ov{\Omega}_1)$, $a|_{\Omega\setminus\Omega_1}=0$, such that there exists $\alpha_0>0$,
$$ \inf_{x\in\partial\Omega_1}a(x)\geq \alpha_0,
$$
 where $\Omega_1\subset\Omega.$ We assume that $\partial\Omega_1$ consists of $\partial\Omega$ and $\Sigma=\partial\Omega_1\setminus\partial\Omega$ where $\Sigma\subset \Omega$. Denote by $\Omega_2=\Omega\setminus (\Omega_1\cup \Sigma)$, then $\partial\Omega_2=\Sigma$ is the interface. We will fix this geometry in this article and assume that $\Omega_1, \Omega_2$ and $\Sigma$ are smooth ($C^\infty$, though this assumption could be relaxed to a finite number of derivatives).
\begin{center}
\begin{tikzpicture}
\draw[fill=blue!40] (0,0) ellipse[x radius=3.4cm, y radius =2.4 cm];
\draw[black] (0,1.8) node {$\Omega_1:a(x)\geq \alpha_0$};
\draw[fill=green!40] (-0.15,0) ellipse [x radius = 1.4cm, y radius =1.6cm];
\draw[black] (0,-0.4) node[above] {$\Omega_2:a(x)=0$};
\draw [->] (-0.15,-1.6) -- (-0.15,-0.8);
\draw[black] (0.2,-0.9) node{$\nu$};
\draw[black] (-0.6,-1.7) node{$\Sigma$};
\draw[black] (0,-3) node {
\text{Geometry of the damped region}};
\end{tikzpicture}
\end{center}

\begin{definition}[Geometric control condition]\label{GCC}
We say that $\Omega_1$ satisfies the geometric control condition, if all generalized rays (geometric optics reflecting on the boundary $\partial\Omega$ according to the laws of geometric optics) of $\Omega$ eventually reach the set $\Omega_1$ in finite time.	
\end{definition}

An alternative (equivalent in this context) property is the following
\begin{itemize}
	\item[(H)] All the bicharacteristics of $\Omega_2$ will reach a non-diffractive point (with respect to the domain $\Omega_2$) at the boundary $\Sigma$.
\end{itemize} 

\begin{thm}\label{thm:decay}
Assume that $\Omega,\Omega_1,\Omega_2$ and $a(x)$ satisfy the above geometric conditions. Then under the hypothesis (H),  there exists a uniform constant $C>0$, such that for every $(u_0,u_1)\in\mathcal{D}(\mathcal{A})$ and $t\geq 0$, 
\begin{equation}\label{eq:decayrate}
\|e^{t\mathcal{A}}(u_0,u_1)\|\leq \frac{C}{1+t}\|(u_0,u_1)\|_{\mathcal{D}(\mathcal{A})}.
\end{equation}
Moreover, the decay rate is optimal in the following sense: when $\Omega\subset \R^d$, $d\geq 2$ and  $\Omega_2=\mathbb{D}\subset \Omega$ is a unit ball, $\Omega_1=\Omega\setminus\Omega_2$,  the semi-group $e^{t\mathcal{A}}$ associated with the damping $a(x)=\mathbf{1}_{\ov{\Omega}_1(x)}$ satisfies
\begin{align}\label{lowerbound}
\sup_{0\neq (u_0,u_1)\in \mathcal{D}(\mathcal{A}) }\frac{ \|e^{t\mathcal{A}}(u_0,u_1)\|_{\mathcal{H}^1}
 } {\|(u_0,u_1)\|_{\mathcal{D}(\mathcal{A})} }\geq \frac{C'}{1+t},
\end{align} 
for all $t\geq 0$, where $C'>0$ is a uniform constant.

\end{thm}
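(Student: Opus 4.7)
The plan is to establish the upper bound via the Gearhart–Prüss–Borichev–Tomilov characterization, which reduces the $O(1/t)$ decay of $e^{t\mathcal{A}}$ on $\mathcal{D}(\mathcal{A})$ to the resolvent estimate $\|(i\lambda-\mathcal{A})^{-1}\|_{\mathcal{H}^1\to\mathcal{H}^1}=O(|\lambda|)$ as $|\lambda|\to\infty$. I would argue by contradiction: assume there exist $\lambda_n\to+\infty$ and $U_n=(u_n,v_n)\in\mathcal{D}(\mathcal{A})$ with $\|U_n\|_{\mathcal{H}^1}=1$ and $\lambda_n\|(i\lambda_n-\mathcal{A})U_n\|_{\mathcal{H}^1}\to 0$. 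Writing $v_n=i\lambda_n u_n+o_{L^2}(\lambda_n^{-1})$ and setting $h_n=\lambda_n^{-1}$, the resolvent system is equivalent to the semiclassical transmission problem
\[
(h_n^2\Delta+1)u_n+ih_n\Div(a\nabla u_n)=o_{L^2}(h_n),\qquad u_n|_{\partial\Omega}=0,
\]
together with $u_n$ continuous and $[(1+i\lambda_n a)\partial_\nu u_n]$ continuous across $\Sigma$. The dissipation identity $\Re\langle\mathcal{A}U_n,U_n\rangle=-\int a|\nabla v_n|^2$ forces $\sqrt{a}\,h_n\nabla v_n\to0$ in $L^2$, so the semiclassical defect measure $\mu$ of the sequence $(u_n)$ is supported in $\overline{\Omega_2}$.

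In $\Omega_2$, $u_n$ solves the semiclassical Helmholtz equation $(h_n^2\Delta+1)u_n=o_{L^2}(h_n)$, so standard interior propagation shows that $\mu$ is invariant under the bicharacteristic flow. In $\Omega_1$, the leading equation $ih_n\Div(a\nabla u_n)+u_n\approx 0$ is elliptic with a large parameter and a Dirichlet datum on $\Sigma$ inherited from $\Omega_2$; its solution is a boundary layer of width $\sim\sqrt{h_n}$, which, combined with the transmission condition $(1+i\lambda_n a)\partial_\nu u_n|_{\Omega_1}=\partial_\nu u_n|_{\Omega_2}$, produces an effective impedance boundary condition for $u_n$ on the $\Omega_2$ side of $\Sigma$. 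The semiclassical boundary calculus of Melrose--Sj\"ostrand and G\'erard--Lebeau applied to this impedance condition at non-diffractive (transverse hyperbolic) points shows that the reflection coefficient has modulus strictly less than one, so a definite fraction of the incident mass of $\mu$ is absorbed at every non-grazing hit on $\Sigma$. Hypothesis (H) then propagates this absorption to every bicharacteristic of $\Omega_2$, yielding $\mu\equiv 0$, contradicting the $L^2$-normalization of $u_n$ (after disposing of the boundary-layer mass using an $H^{1/2}\to L^2$ trace argument).

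For the optimality part, I would construct quasi-modes concentrated on a periodic polygonal billiard trajectory inscribed in $\Omega_2=\mathbb{D}$ whose sides meet $\Sigma=\partial\mathbb{D}$ at a fixed angle that is neither tangential nor orthogonal. The local transmission analysis developed above, now run in reverse at each reflection point, shows that the reflection coefficient has the form $R(h)=e^{i\theta(h)}(1-c\,h+O(h^{3/2}))$ with $c>0$, whereas the transmitted amplitude in $\Omega_1$ is $O(\sqrt h)$ and decays exponentially across the boundary layer. A Gaussian beam / WKB ansatz along the closed orbit, rescaled so that the phase closes up at a quantized sequence $\lambda_n\to\infty$ and matched to leading order at every vertex, produces $U_n\in\mathcal{D}(\mathcal{A})$ with $\|U_n\|_{\mathcal{H}^1}\sim 1$ and $\|(i\lambda_n-\mathcal{A})U_n\|_{\mathcal{H}^1}=O(\lambda_n^{-1})$; this forces $\|(i\lambda_n-\mathcal{A})^{-1}\|\gtrsim\lambda_n$, which by Borichev--Tomilov is sharp and gives \eqref{lowerbound}.

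The principal obstacle is the interface analysis: in a neighbourhood of $\Sigma$ one must simultaneously handle the semiclassical Helmholtz equation on the $\Omega_2$ side, the singularly perturbed elliptic equation with its $O(\sqrt h)$ boundary layer on the $\Omega_1$ side, and the transmission conditions coupled through the large factor $\lambda a$. Quantifying the loss per reflection (needed exactly at order $h$ both from above and from below) requires a matched-asymptotic/tangential pseudodifferential computation where the defect measures of the incoming, reflected, and evanescent components are related with explicit constants. The fact that one must also exclude grazing bicharacteristics—which is the role of the non-diffractive hypothesis (H)—is what separates the admissible rays from those on which the reflection coefficient degenerates, and this is the subtlest point of the argument.
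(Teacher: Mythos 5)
Your overall strategy for the upper bound coincides with the paper: reduce via the Borichev--Tomilov theorem to the resolvent estimate $\|(i\lambda-\mathcal A)^{-1}\|=O(\lambda)$, argue by contradiction with semiclassical quasi-modes, and use semiclassical defect measures propagated in $\Omega_2$ and a boundary-layer analysis at scale $\sqrt h$ in $\Omega_1$. However, the mechanism you invoke at the interface contains a genuine gap.

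You claim that the effective impedance condition on the $\Omega_2$ side gives a reflection coefficient with modulus strictly less than one, so ``a definite fraction of the incident mass of $\mu$ is absorbed at every non-grazing hit on $\Sigma$.'' This cannot be made to work for a semiclassical defect measure: by your own later estimate the loss per reflection is of order $h$, i.e.\ $|R(h)|^2 = 1 - c\,h + o(h)$, so $|R(h)|^2\to 1$ and the outgoing defect measure equals the incoming one. An $O(h)$ absorption per bounce gives no contradiction in the limit. The paper's argument is fundamentally different: the dissipation identity combined with the equation is pumped through to a genuine a priori estimate (Lemma \ref{apriori}: $\|\nabla u_1\|=o(h^{3/2})$, $\|u_1\|=o(h^2)$), which, via the trace theorem and the $\hbar=\sqrt h$ Dirichlet-to-Neumann parametrix in $\Omega_1$ (Section \ref{sectionelliptic}, Proposition \ref{estimatetrace1}), yields that \emph{both} the Dirichlet trace $w_2|_\Sigma=o_{H^{1/2}}(h^{1/2})$ and the microlocalized Neumann trace $\varphi\chi(hD_{x'})\varphi\,h\partial_\nu w_2|_\Sigma=o_{L^2}(1)$ vanish in the limit. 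With $o(1)$ Cauchy data on $\Sigma$ one does not need any reflection/absorption argument: the factorized hyperbolic energy estimate of Proposition \ref{zerohyperbolic} shows directly that $\mu=0$ in a full neighborhood of every hyperbolic point of $\Sigma$, and propagation plus hypothesis (H) then annihilates $\mu$. Your plan would need to replace the ``$|R|<1$'' claim by precisely this trace-vanishing argument and carry the $o$-gain from the dissipation all the way through the two-scale ($h$ and $\sqrt h$) boundary calculus (including the wavefront-set control of Proposition \ref{control:frontdonde}), which is the real content of Sections 2--4.

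On the lower bound your route is admissible but diverges from the paper. You propose Gaussian-beam quasi-modes along an inscribed periodic billiard polygon with quantization of the phase around the loop. The paper instead uses exact Dirichlet eigenfunctions of the disc, $J_{\alpha n}(\lambda_{\alpha n,n}r)e^{i\alpha n\theta}$ with $\alpha$ fixed and large, whose wavefront is hyperbolically localized on $\Sigma$ (Lemma \ref{energynorm} and Proposition \ref{zeros}); this makes the interior Helmholtz equation exact with vanishing Dirichlet trace, and all the work goes into solving the elliptic mixed problem in $\Omega_1$ (Proposition \ref{pb:mixed}) and then killing the remaining boundary discrepancies by a finite geometric-optics correction and a trace-lifting step. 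Your beam construction, if carried out with a correct quantization condition and careful vertex matching at order $h$, could in principle yield the same bound and would be less tied to the disc geometry, but it is technically heavier (one must control caustics, the period map, and the $O(h)$ nonclosure of the amplitude). Be aware that the key numerology you would need to verify is exactly the one the paper obtains by construction: $\|U_n\|_{\mathcal H}\sim1$ with $\|(i\lambda_n-\mathcal A)U_n\|_{\mathcal H}=O(h_n)$, and the $O(\sqrt h)$-wide evanescent tail in $\Omega_1$ must contribute only $O(h)$ to the error in both the $H^1$ and $L^2$ components.
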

\begin{rem} In~\cite{B19}, under the geometric control condition, a weaker decay rate, namely $\frac 1 {\sqrt{1+t}}$ was achieved with a simpler and very robust general proof requiring much less rigidity on the geometric setting. Notice also that in dimension $1$, a stronger decay rate, namely $\frac 1 {{(1+t)^2}}$ is known to hold~\cite[Section 3, Example 1]{LR05}. It is hence remarquable that in higher dimensions we can construct examples of geometries where the $\frac{ 1} {(1+ t)}$ decay rate is saturated. This phenomenon is linked to the fact that in higher dimensions there exists sequences of eigenfunctions of the Laplace operator in $\Omega _2$ with Dirichlet boundary condidtions (or at least high order quasi-modes), with mass concentrated along rays which do not encounter the boundary at normal incidence (a fact which is clearly false in dimension $1$, seeing that in this case the incidence is always normal).

\begin{center}
	\begin{tikzpicture}
	\draw[fill=blue!40] (0,0) ellipse[x radius=3.4cm, y radius =2.4 cm];
	\draw[black] (0,1.8) node {$\Omega_1$};
	\draw[fill=green!40] (-0.15,0) ellipse [x radius = 1.4cm, y radius =1.6cm];
	\draw[black] (0,-0.4) node[above] {$\Omega_2$};
	\draw[->,blue](-1.3,0.7) -- (-0.15,1.6);
	\draw[->,blue] (-0.15,1.6)--(1.1,0.6);
	\draw[->,blue] (1.1,0.6)--(-0.2,-1.58);
	\draw[->,blue] (-0.2,-1.58)--
	(-1.2,0);
	\draw[->,blue] (-1.1,0.7)--(0,1.57);
	\draw[->,blue] (0,1.57)--(1,0.8);
	\draw[->,blue] (1,0.8)--(-0.4,-1.56);
	\draw[->,blue]
	(-0.4,-1.56)--(-1.4,0);
	\draw[black] (-0.6,-1.7) node{$\Sigma$};
	\draw[black] (0,-3) node {
		\text{Angle of incidence is acute}};
	\end{tikzpicture}
\end{center}

\end{rem}

\begin{rem}
	Let us mention that the non-exponential stability for \eqref{KVDW-main} and a more general (theromo)viscoelastic system were studied in \cite{MRa}, where the authors obtained a rougher polynomial decay rate $O(t^{-\frac{1}{3}})$. Moreover, in our result, the damped region ($\Omega_1$) only needs to satisfy the geometric control condition, so the geometric configuration in Munoz Rivera-Racke is contained in our assumption.   
\end{rem}

\begin{rem} The choice of Dirichlet boundary conditions on $\partial \Omega$ plays no particular role, and we could have taken any type of boundary conditions for which the system is well posed and we have propagation of singularities (e.g. Neumann boundary conditions)
\end{rem}
\begin{rem}
The picture for Kelvin Voigt damping is now quite complete for smooth (essentially $C^2$) dampings \cite{BC15} and \cite{BS} (and also \cite{LR06},\cite{Te16}), or discontinuous dampings, see in dimension $1$ ~\cite[Section 3, Example 1]{LR05}, and the present paper. It would be interesting to understand the intermediate situation ($C^\alpha$, $\alpha \in (0, 2)$, dampings). We refer to~\cite{HZZ} for resuts in this direction in dimension $1$.
\end{rem}

\begin{rem}
In this article, we do not treat the case where $\Sigma\cap\partial\Omega\neq \emptyset$. In that case, $\partial\Omega_2$ can be only Lipchitz, and more technical treatments for the propagation of singularities are needed near the points $\Sigma\cap\partial\Omega$.
\end{rem}

\begin{thm}\label{thm:resolvent}
We have Spec$(\mathcal{A})\cap i\R=\emptyset$. Moreover, there exists $C>0$, such that for all $\lambda\in\R, |\lambda|\geq 1$,
	\begin{equation}\label{eq:resolvent}
	\big\|(i\lambda-\mathcal{A})^{-1}\big\|_{\mathcal{L}(\mathcal{H})}\leq C|\lambda|.
	\end{equation}
Moreover, when $\Omega\subset \R^d$, $d\geq 2$ and $\Omega_2=\mathbb{D}\subset \Omega$ is a unit ball, $\Omega_1=\Omega\setminus\Omega_2$, we actually have a lower bound:
$$\limsup_{\lambda\rightarrow + \infty} \lambda^{-1} \big\|(i\lambda-\mathcal{A})^{-1}\big\|_{\mathcal{L}(\mathcal{H})}=c >0.$$ In other words,  there exist sequences $(U_n)\subset  \mathcal{H}^1$ and $\lambda_n\rightarrow +\infty $ such that
\begin{equation}\label{lowerboundbis} \|U_n\|_{\mathcal{H}}=1,\; \|(i\lambda_n-\mathcal{A})U_n\|_{\mathcal{H}}=O(\lambda_n^{-1}).
\end{equation}
\end{thm}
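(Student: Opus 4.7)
I would first verify $\mathrm{Spec}(\mathcal{A})\cap i\mathbb{R}=\emptyset$ by combining an energy identity with unique continuation. Given $\lambda\in\mathbb{R}$ and $U=(u_0,u_1)\in\mathcal{D}(\mathcal{A})$ with $(i\lambda-\mathcal{A})U=0$, one has $u_1=i\lambda u_0$ and $u_0$ solves the distributional equation $\mathrm{div}\big((1+i\lambda a)\nabla u_0\big)+\lambda^2 u_0=0$ on $\Omega$, with $u_0|_{\partial\Omega}=0$ and the transmission conditions $[u_0]_\Sigma=[(1+i\lambda a)\partial_\nu u_0]_\Sigma=0$. Testing against $\bar u_0$ and taking the imaginary part forces $\lambda\int_{\Omega_1} a|\nabla u_0|^2=0$; since $a\ge \alpha_0/2$ on a neighborhood of $\partial\Omega_1$, this gives $\nabla u_0=0$ there, and the homogeneous Dirichlet condition together with the connectedness of $\Omega_1$ forces $u_0\equiv 0$ on $\Omega_1$. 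The transmission relations then give $u_0|_\Sigma=\partial_\nu u_0|_\Sigma=0$ from the $\Omega_2$-side, and Holmgren's theorem for $(-\Delta-\lambda^2)u_0=0$ on $\Omega_2$ yields $u_0\equiv 0$. The case $\lambda=0$ is immediate from invertibility of the Dirichlet Laplacian.

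\textbf{Upper bound by contradiction.} For $\|(i\lambda-\mathcal{A})^{-1}\|_{\mathcal{L}(\mathcal{H})}\le C|\lambda|$, I would use semiclassical analysis with $h_n:=\lambda_n^{-1}\to 0$: suppose $\|U_n\|_\mathcal{H}=1$ and $F_n:=(i\lambda_n-\mathcal{A})U_n$ has $\mathcal{H}$-norm $o(h_n)$, and aim to derive $\|U_n\|_\mathcal{H}\to 0$. Dissipativity gives $\int_\Omega a|\nabla u_{1,n}|^2=\Re\langle F_n,U_n\rangle_\mathcal{H}=o(h_n)$; substituting $u_{1,n}=i\lambda_n u_{0,n}+g_n$ (with $g_n$ small in $H_0^1$) yields $\int_\Omega a|\nabla u_{0,n}|^2=o(h_n^3)$, and thus $\|\nabla u_{0,n}\|_{L^2(\Omega_1\cap\{d(\cdot,\Sigma)<\delta\})}=o(h_n^{3/2})$. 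The equation now reads $(-h_n^2\Delta-ih_n\mathrm{div}(a\nabla)-1)u_{0,n}=O(h_n)_{L^2}$. On $\Omega_1$, this operator is elliptic at the semiclassical scale $\sqrt{h_n}$, so two-scale elliptic estimates combined with the above trace control yield $\|u_{0,n}\|_{L^2(\Omega_1)}\to 0$ and $\|u_{0,n}|_\Sigma\|_{H^{1/2}(\Sigma)}\to 0$; by continuity of the trace this also holds from the $\Omega_2$-side. On $\Omega_2$, $u_{0,n}$ is thus an $o(h_n)_{L^2}$-quasimode of $(-h_n^2\Delta-1)$ with Dirichlet trace vanishing in the limit at $\Sigma$. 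I would extract a semiclassical defect measure $\mu$ on $T^*\Omega_2$ supported on $\{|\xi|=1\}$; Melrose--Sjöstrand propagation shows $\mu$ is invariant under the generalized bicharacteristic flow reflecting at $\partial\Omega$, while the vanishing Dirichlet trace forces $\mu$ to vanish at every non-diffractive point of $\Sigma$. Hypothesis (H) ensures that every generalized bicharacteristic of $\Omega_2$ meets such a point in finite time, so flow-invariance forces $\mu\equiv 0$; hence $\|u_{0,n}\|_{L^2(\Omega_2)}=o(h_n)$ and thus $\|U_n\|_\mathcal{H}\to 0$, the contradiction. The main obstacle is the precise interface analysis at $\Sigma$: the very large jump coefficient $1+i\lambda_n a$ couples an elliptic-with-small-parameter problem on $\Omega_1$ at scale $\sqrt{h_n}$ to a hyperbolic problem on $\Omega_2$ at scale $h_n$, and implementing propagation near tangential (but non-diffractive) points of $\Sigma$ requires a two-sided, two-scale semiclassical pseudodifferential calculus that constitutes the technical heart of the proof.

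\textbf{Lower bound in the disk geometry.} To saturate the bound when $\Omega_2=\mathbb{D}$, I would construct explicit quasimodes concentrating on rays meeting $\Sigma=\partial\mathbb{D}$ at a fixed acute angle. Take the Bessel eigenfunctions $\varphi_n(r,\theta)=J_m(j_{m,k_n}r)e^{im\theta}$ of $-\Delta|_{\mathbb{D}}^{\mathrm{Dir}}$ with $m,k_n\to\infty$ and $m/j_{m,k_n}\to\sin\theta_0\in(0,1)$; each $\varphi_n$ is an exact eigenfunction of eigenvalue $\lambda_n^2=j_{m,k_n}^2$ microlocally concentrated on a ray-torus meeting $\partial\mathbb{D}$ at angle $\theta_0$. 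Extend by $u_{0,n}=\tilde\varphi_n+w_n$, where $w_n$ is a boundary-layer corrector supported in $\{0\le d(\cdot,\Sigma)\lesssim\lambda_n^{-1/2}\}\cap\Omega_1$, obtained from a WKB expansion
\[
w_n\sim \sum_{k\ge 0}\lambda_n^{-k/2}W_k\big(\sqrt{\lambda_n}\,y,\,s\big)
\]
in normal/tangential coordinates $(y,s)$ to $\Sigma$, with the $W_k$ chosen inductively so that the transmission conditions $[u_{0,n}]_\Sigma=[(1+i\lambda_n a)\partial_\nu u_{0,n}]_\Sigma=0$ hold up to sufficiently high order. Setting $U_n=(u_{0,n},i\lambda_n u_{0,n})$ (with $\|U_n\|_\mathcal{H}\sim\lambda_n$), pushing the expansion far enough ensures $\|(i\lambda_n-\mathcal{A})U_n\|_\mathcal{H}=O(1)$, whence $\|(i\lambda_n-\mathcal{A})^{-1}\|_{\mathcal{L}(\mathcal{H})}\gtrsim\lambda_n$, saturating the upper bound.
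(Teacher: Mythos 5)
Your overall strategy matches the paper's very closely (contradiction argument, two-scale elliptic analysis on $\Omega_1$ at scale $\sqrt{h}$, semiclassical defect measure and Melrose--Sj\"ostrand propagation on $\Omega_2$, Bessel eigenfunctions of the disc for the lower bound). However, the upper-bound argument as you have written it contains a genuine gap at the decisive step.

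You write that on $\Omega_2$ the function $u_{0,n}$ is an $o(h_n)$-quasimode of $-h_n^2\Delta-1$ ``with Dirichlet trace vanishing in the limit at $\Sigma$'' and then conclude that ``the vanishing Dirichlet trace forces $\mu$ to vanish at every non-diffractive point of $\Sigma$.'' This is false. A vanishing Dirichlet trace yields only that the measure is invariant under the broken bicharacteristic flow; it does not annihilate the measure anywhere on $\Sigma$. Take any sequence of Dirichlet eigenfunctions of $\Omega_2$: these are exact solutions of $(-h^2\Delta-1)u=0$ with $u|_\Sigma\equiv 0$, and their defect measures are nontrivial and, under (H), charge non-diffractive points. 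What kills the measure in the paper is the additional control of the \emph{Neumann} trace: from the a priori estimate (Lemma \ref{apriori}) and the two-scale elliptic parametrix on $\Omega_1$ (Propositions \ref{control:frontdonde} and \ref{estimatetrace1}) one gets, via the transmission condition, that $h\partial_\nu u_2|_\Sigma=o(h)$ in $L^2(\Sigma)$ after tangential microlocalization, i.e.\ the defect measure $\nu_0$ of the Neumann data vanishes. It is the vanishing of $\nu_0$, not of the Dirichlet trace, that drives the proof of $\mu(\mathcal H)=0$ near $\Sigma$ (Proposition \ref{zerohyperbolic}) and of $\mu(\mathcal G^{2,-})=0$ (Proposition \ref{zerogliding}), after which the geometric control hypothesis (H) closes the argument. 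Your sketch extracts only the $L^2$ and $H^{1/2}$-trace smallness of $u_{0,n}$ from the elliptic region but never exports the Neumann trace control through $\Sigma$, and without it the conclusion does not follow.

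Two smaller remarks. For $\mathrm{Spec}(\mathcal A)\cap i\mathbb R=\emptyset$ the paper simply invokes the sub-exponential resolvent bound of \cite{B19}; your direct unique-continuation argument is a legitimate alternative, but the step from ``$\nabla u_0=0$ on a neighborhood of $\partial\Omega_1$'' to ``$u_0\equiv 0$ on $\Omega_1$'' needs an explicit unique-continuation argument if $a$ is not assumed positive throughout $\Omega_1$. For the lower bound, your WKB boundary-layer ansatz at scale $\lambda_n^{-1/2}$ in $\Omega_1$ is a reasonable alternative to the paper's route (which solves the elliptic mixed Dirichlet--Neumann problem exactly by Lax--Milgram and then iteratively corrects the residual trace mismatch using geometric optics in $\Omega_2$ and the surjectivity of the trace operator); what the paper's iteration buys is an exact match of the transmission conditions without having to justify convergence of a two-scale asymptotic series across the huge jump coefficient $1+ih^{-1}a$.
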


\noi
$\bullet${ Theorem \ref{thm:decay} and  Theorem \ref{thm:resolvent}} are essentially equivalent. Indeed, the equivalence between the resolvent estimate \eqref{eq:resolvent} and the decay rate \eqref{eq:decayrate} is covered by Theorem 2.4 of \cite{BoTo}.  It is very likelly that~\eqref{lowerbound} and~\eqref{eq:resolvent} are also equivalent. However, we prove here only the fact that ~\eqref{lowerboundbis} implies \eqref{lowerbound}. We argue as follows: 
Let $U_n$ be a sequence of quasi-modes associated with $\lambda_n$ ($\lambda_n\rightarrow+\infty$) that saturates \eqref{eq:resolvent}. Denote by $F_n=(i\lambda_n-\mathcal{A})U_n$. We have
$$ \|U_n\|_{\mathcal{H}}=1,\;\|F_n\|_{\mathcal{H}}=O(\lambda_n^{-1}),\quad \|U_n\|_{\mathcal{D}(\mathcal{A})}\sim \lambda_n.
$$
Define $U_n(t)=\mathrm{e}^{t\mathcal{A}}U_n$ and we write
$$
 U_n(t)=\mathrm{e}^{i\lambda_n t}U_n+R_n(t),
$$ 
then
$$ (\partial_t-\mathcal{A})R_n=-(i\lambda_n-\mathcal{A})\mathrm{e}^{it\lambda_n}U_n=O_{\mathcal{H}}(\lambda_n^{-1}),\; R_n(0)=0.
$$
Since $$R_n(t)=-\int_0^t\mathrm{e}^{(t-s)\mathcal{A}}(i\lambda_n-\mathcal{A})\mathrm{e}^{is\lambda_n}U_nds,$$
we deduce that $\|R_n(t)\|_{\mathcal{H}}=O(\lambda_n^{-1}t)$ for $t>0$. Assume that $\kappa(t)$ is the optimal decay rate of the energy, then by $E[U_n(t)]^{\frac{1}{2}}=\|U_n(t)\|_{\mathcal{H}}\leq \kappa(t)^{\frac{1}{2}}\|U_n\|_{\mathcal{D}(\mathcal{A})}$ we have 
$$ C_1\kappa(t)^{\frac{1}{2}} \lambda_n\geq 1-\|R_n(t)\|_{\mathcal{H}}=1-C_2\lambda_n^{-1}t.
$$
For fixed $t>0$, we choose $n$ large enough such that $C_2\lambda_n^{-1}t=\frac{1}{2}$, thus we obtain that
$$ \kappa(t)^{\frac{1}{2}}\geq\frac{1}{2C_1\lambda_n}=\frac{1}{C_1C_2 t}.
$$
This proves \eqref{lowerbound}. As a consequence, we shall in the sequel reduce the analysis to the proof of Theorem~\ref{thm:resolvent}.

This article is organized as follows. We present the proof of \eqref{eq:resolvent} of Theorem \ref{thm:resolvent} in Section 2, Section 3 and Section 4. The proof follows from a contradiction argument which reduces the matter to study the associated high energy quasi-modes. In Section 2, we reduce the equation of quasi-modes to a transmission problem, consisting of an elliptic system in $\Omega_1$ and a hyperbolic system in $\Omega_2$, coupled at the interface $\Sigma$. Both systems are semi-classical but with different scales $h, \hbar= h^{1/2}$. Next in Section 3, we study the elliptic system and obtain the information of the quasi-modes restricted to the interface by transmission conditions. Then in Section 4, we prove the propagation theorem for the hyperbolic problem in $\Omega_2$ which will lead to a contradiction. We need to analyze two semi-classical scales corresponding to the elliptic and hyperbolic region, connected by the transmission condition on the interface.
 Finally in Section 5, we construct a sequence of quasi-modes saturating the inequality \eqref{eq:resolvent} in a simple geometry. In particular this proves the optimality of the resolvent estimate. We collect various toolboxes in the final section of the appendix. 
 
Throughout this article, we adopt the standard notations in semi-classical analysis (see for example \cite{EZB}). We will use the standard quantization for classical and semi-classical pseudo-differential operators $\mathrm{Op},\mathrm{Op}_h,\mathrm{Op}_{\hbar}$. We will also adopt the usual asymptotic notations, such as $O(h^{\alpha}),O(\hbar^{\alpha})$ and $o(h^{\alpha}),o(\hbar^{\alpha})$, as $h\rightarrow 0$. Moreover, for a Banach space $X$ and $h$-dependent families of functions $f_h,g_h$, we mean $f_h=O_X(h^{\alpha})$, $g_h=o_X(h^{\alpha})$, if
 $$ \|f_h\|_X=O(h^{\alpha}),\quad \|g_h\|_X=o(h^{\alpha}),
 $$
 as $h\rightarrow 0$.

\subsection*{Acknowledgment}
The first author is supported by Institut Universitaire de France and
		ANR grant ISDEEC, ANR-16-CE40-0013. The second author is supported by the postdoc programe: ``Initiative d'Excellence Paris Seine" of CY Cergy-Paris Universit\'e and ANR grant
ODA (ANR-18-CE40- 0020-01).

\section{Reduction to a transmission problem }

It was proved by the first author in \cite{B19} that 
$$ \|(\mathcal{A}-i\lambda)^{-1}\|_{\mathcal{L}(\mathcal{H})}\leq Ce^{c|\lambda|}
$$ 
under more general conditions for the damping. Therefore, the proof of the first part of Theorem \ref{thm:resolvent} (i.e. \eqref{eq:resolvent}) is reduced to the high energy regime $|\lambda|\rightarrow+\infty$.
  For this, we argue by contradiction. Assume that \eqref{eq:resolvent} is not true, then there exist $h$-dependent functions $U=\binom{u}{v}, F=\binom{f}{g}$, such that 
\begin{align}\label{seq:contradiction} 
&\|U_j\|_{H^1\times L^2}=O(1),\;\|F_j\|_{H^1\times L^2}=o(h)\\
& (\mathcal{A}-ih^{-1})U=F.
\end{align}

 Let $\nu$ be the unit normal vector pointing to the undamped region $\Omega$. Denote by $a_1(x)=a(x)\mathbf{1}_{\ov{\Omega}_1}$
Let $U=\binom{u}{v}$ and $F=\binom{f}{g}$. Then for $U\in D(\mathcal{A})$ and $F\in\mathcal{H}$, the equation
$$ (\mathcal{A}-i\lambda)U=F
$$
is equivalent to ($h=\lambda^{-1}$) the following system for $u_j=u\mathbf{1}_{\Omega_j}, f_j=f\mathbf{1}_{\Omega_j},$ and $g_j=g\mathbf{1}_{\Omega_j}$, $j=1,2$:
\begin{align}\label{system1}
\begin{cases} 
& u_1=ih(f_1-v_1), \text{ in }\Omega_1\\
& h\Delta u_1+h\nabla_x\cdot(a_1(x)\nabla v_1)-i v_1=hg_1,\text{ in } \Omega_1 \\
& u_2=ih(f_2-v_2), \text{ in } \Omega_2\\
& h\Delta u_2-iv_2=hg_2,\text{ in }\Omega_2
\end{cases}
\end{align}
with boundary condition on the interface
\begin{align}\label{BC:interface}
u_1|_{\Sigma}=u_2|_{\Sigma},\quad \partial_{\nu}u_2|_{\Sigma}=(\partial_{\nu}u_1+a_1\partial_{\nu}v_1 )|_{\Sigma},
\end{align}
Indeed, the equations inside $\Omega_1,\Omega_2$ can be verified directly.  The first boundary condition is just the fact that the function $u$ equal to $u_j$ in $\Omega_j$ must have no jump at the interface to enssure taht ity belongs to $H^1( \Omega)$. To check the second boundary condition, we take an arbitrary test function $\varphi\in C_c^{\infty}(\Omega)$ and multiply the equation $h\Delta u-iv+h\Div a\nabla v=0$ by $\varphi$. We obtain that
\begin{align*}
0=&-h\int_{\Omega}\nabla u\cdot\nabla\ov{\varphi}-h\int_{\Omega}a\nabla v\cdot\nabla\ov{\varphi}-i\int_{\Omega}v\ov{\varphi}\\
=&-\sum_{j=1}^2\int_{\Omega_j}\big(h\nabla u_j\cdot\nabla\ov{\varphi}-iv_j\ov{\varphi}\big)-h\int_{\Omega_1}a_1(x)\nabla v_1\cdot\nabla\ov{\varphi}\\
=&\sum_{j=1}^2\int_{\Omega_j}\big(h\Delta u_j\ov{\varphi}-iv_j\ov{\varphi}\big)+\int_{\Omega_1}h\nabla_x\cdot(a_1(x)\nabla v_1)\cdot \ov{\varphi}+h\int_{\Sigma}(\partial_{\nu}u_2-\partial_{\nu}u_1-a_1\partial_{\nu}v_1)|_{\Sigma}\cdot\ov{\varphi}.
\end{align*}
Using the differential equations in $\Omega_1,\Omega_2$, the last term on the right side is equal to $$h\int_{\Sigma}(\partial_{\nu}u_2-\partial_{\nu}u_1-a_1\partial_{\nu}v_1)|_{\Sigma}\cdot\ov{\varphi}|_{\Sigma},$$
hence it must vanish for all $\varphi$. This verifies \eqref{BC:interface}.

First we prove an a priori estimate for these functions:
\begin{lem}[A priori estimate]\label{apriori}
Denote by $U_j=\binom{u_j}{v_j}, F_j=\binom{f_j}{g_j}$, for $j=1,2$. 
Assume that $\|U_j\|_{H^1\times L^2}=O(1)$ and $\|F_j\|_{H^1\times L^2}=o(h)$, then we have
$$ \|\nabla v_1\|_{L^2}=o(h^{\frac{1}{2}}), \quad \|v_1\|_{L^2}=o(h)
$$
and
$$ \|\nabla u_1\|_{L^2}=o(h^{\frac{3}{2}}),\quad \|u_1\|_{L^2}=o(h^2).
$$
Consequently, by the trace theorem, we have
$$ \|u_1\|_{H^{\frac{1}{2}}(\Sigma)}=o(h^{\frac{3}{2}}),\quad \|v\|_{H^{\frac{1}{2}}(\Sigma)}=o(h^{\frac{1}{2}}).
$$
\end{lem}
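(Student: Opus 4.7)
The proof rests on the basic energy identity for the semigroup. Taking the $\mathcal{H}^1$-inner product of $(\mathcal{A}-ih^{-1})U=F$ with $U$, noting that $\mathcal{A}U=(v,\Delta u+\nabla\cdot(a\nabla v))$ and that $u, v\in H_0^1(\Omega)$ are continuous across $\Sigma$, integration by parts on all of $\Omega$ gives
\[
\mathrm{Re}\,\langle \mathcal{A}U, U\rangle_{\mathcal{H}^1} = -\int_\Omega a(x)\,|\nabla v|^2\,dx,
\]
while $-ih^{-1}\|U\|_{\mathcal{H}^1}^2$ is purely imaginary. Hence
\[
\int_\Omega a\,|\nabla v|^2 = -\mathrm{Re}\,\langle F, U\rangle_{\mathcal{H}^1} \leq \|F\|_{\mathcal{H}^1}\,\|U\|_{\mathcal{H}^1} = o(h).
\]
Since $a\geq\alpha_0>0$ on $\overline{\Omega}_1$ by hypothesis, this yields at once $\|\nabla v_1\|_{L^2(\Omega_1)}^2=o(h)$, that is $\|\nabla v_1\|_{L^2}=o(h^{1/2})$. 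Combined with the first line $u_1=ih(f_1-v_1)$ of \eqref{system1}, one has
\[
\|\nabla u_1\|_{L^2} \leq h\bigl(\|\nabla f_1\|_{L^2}+\|\nabla v_1\|_{L^2}\bigr) \leq h\bigl(o(h)+o(h^{1/2})\bigr) = o(h^{3/2}).
\]

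The subtle point is the $L^2$ bound on $v_1$, which cannot be obtained by a naive Poincar\'e inequality (the latter only gives $o(h^{1/2})$). The strategy would be to multiply the second equation of \eqref{system1} on $\Omega_1$ by $\bar u_1$, integrate by parts using $u_1|_{\partial\Omega\cap\partial\Omega_1}=0$ together with the transmission condition $(\partial_\nu u_1+a_1\partial_\nu v_1)|_\Sigma=\partial_\nu u_2|_\Sigma$ and $u_1|_\Sigma=u_2|_\Sigma$, then substitute $-iv_1=u_1/h-if_1$ and take real parts. The bulk contributions are all controlled by $o(h^{7/2})$ using the gradient estimates above (in particular $\|\nabla v_1\|\cdot\|\nabla u_1\|=o(h^2)$), leaving the interface integral $-h^2\mathrm{Re}\int_\Sigma(\partial_\nu u_2)\bar u_2$ as the only non-trivial term. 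This is then evaluated by Green's identity on $\Omega_2$ applied to the Helmholtz-type equation $(h^2\Delta+1)u_2=ihf_2+h^2g_2$, obtained by eliminating $v_2=f_2+iu_2/h$ from the last line of \eqref{system1}. Careful algebraic bookkeeping --- unfolding $\|u_j\|^2=h^2\|f_j-v_j\|^2$ and exploiting the cancellation of the $f_j$-cross terms that appear on both sides --- combined with the analogous Helmholtz identity for $v_2$ (which solves $(h^2\Delta+1)v_2=h^2\Delta f_2+ihg_2$) eventually gives $\|u_1\|_{L^2}^2=o(h^4)$, whence $\|u_1\|_{L^2}=o(h^2)$ and $\|v_1\|_{L^2}\leq\|u_1\|_{L^2}/h+\|f_1\|_{L^2}=o(h)$.

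The trace bounds then follow from the standard trace inequality on $\Omega_1$: $\|u_1\|_{H^{1/2}(\Sigma)}\leq C\|u_1\|_{H^1(\Omega_1)}=o(h^{3/2})$ and $\|v_1\|_{H^{1/2}(\Sigma)}\leq C\|v_1\|_{H^1(\Omega_1)}=o(h^{1/2})$, where the continuity of $v\in H_0^1(\Omega)$ across $\Sigma$ identifies $v|_\Sigma$ with $v_1|_\Sigma=v_2|_\Sigma$. The main obstacle is the middle paragraph: the interface contribution $h^2\mathrm{Re}\int_\Sigma(\partial_\nu u_2)\bar u_2$ is a priori only $O(1)$, and extracting the cancellation that pushes it down to $o(h^4)$ requires simultaneously using the Helmholtz-type identities for both $u_2$ and $v_2$ on $\Omega_2$ together with the algebraic relations provided by the first equation of \eqref{system1}.
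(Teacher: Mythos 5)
Your derivation of the gradient bounds is sound and is essentially a cleaner repackaging of the paper's chain of identities: testing $(\mathcal{A}-ih^{-1})U=F$ against $U$ in $\mathcal{H}^1$ and taking real parts gives $\int_\Omega a|\nabla v|^2 = -\mathrm{Re}\,\langle F,U\rangle = o(h)$, whence $\|\nabla v_1\|_{L^2}=o(h^{1/2})$ and then $\|\nabla u_1\|_{L^2}=o(h^{3/2})$ from $u_1=ih(f_1-v_1)$. That part matches the paper in substance.

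The $L^2$ bound on $v_1$ is where the proposal does not work, and the strategy you outline is not the paper's and, as written, does not close. When you multiply the second equation of \eqref{system1} by $\bar u_1$, use the transmission conditions, and then try to evaluate the interface term by Green's identity on $\Omega_2$, you obtain (up to lower-order terms controlled by the already-established gradient bounds)
\begin{equation*}
\|u_1\|_{L^2(\Omega_1)}^2 \;+\; \Bigl(\|u_2\|_{L^2(\Omega_2)}^2 - h^2\|\nabla u_2\|_{L^2(\Omega_2)}^2\Bigr) \;=\; o(h^3) + o(h^2)\|u_1\|_{L^2}.
\end{equation*}
The parenthesized quantity is, a priori, only $O(h^2)$ (both of $\|u_2\|^2$ and $h^2\|\nabla u_2\|^2$ are of this size and the sign of the difference is not determined), so you cannot deduce $\|u_1\|_{L^2}^2 = o(h^4)$. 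Trying to control that difference again by a Green's identity on $\Omega_2$ is circular: the Helmholtz identity $(h^2\Delta+1)u_2 = ihf_2 + h^2g_2$ tested against $\bar u_2$ gives precisely $\|u_2\|^2 - h^2\|\nabla u_2\|^2 = h^2\,\mathrm{Re}\int_\Sigma \partial_\nu u_2\,\bar u_2 + o(h^3)$, i.e.\ the same term you started from. The extra identity for $v_2$ that you invoke does not introduce any new information because $v_2 = f_2 + i u_2/h$ is determined by $u_2$ and $f_2$, and the $f$-cross terms you hope to cancel only produce errors already dominated by the terms above. In short, there is no algebraic cancellation of the type you describe that is available at this stage of the argument.

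The paper closes the gap by a completely different mechanism: it reads the second equation of \eqref{system1} in $\Omega_1$ as a \emph{weak} ($H^{-1}(\Omega_1)$) bound for $v_1$,
\begin{equation*}
iv_1 = h\Delta u_1 + h\,\nabla\!\cdot(a_1\nabla v_1) - h g_1 = o_{H^{-1}(\Omega_1)}(h^{3/2}),
\end{equation*}
using the gradient estimates $\|\nabla u_1\|_{L^2}=o(h^{3/2})$, $\|\nabla v_1\|_{L^2}=o(h^{1/2})$, $\|g_1\|_{L^2}=o(h)$. Interpolating this with the $H^1$ information on $v_1$ (bootstrapping once if needed) yields $\|v_1\|_{L^2}=o(h)$, and then $u_1=ih(f_1-v_1)$ gives $\|u_1\|_{L^2}=o(h^2)$. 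This avoids entirely the need to control the real part of the interface flux $\int_\Sigma\partial_\nu u_2\,\bar u_2$, which is the obstacle your method runs into. I would recommend replacing your middle paragraph with this elliptic-regularity-plus-interpolation step.
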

\begin{proof}

First we observe that, from the relation between $u$ and $v$, we deduce that $\nabla v\in L^2(\Omega)$ and
\begin{equation}\label{eq1}
\|\nabla v_j\|_{L^2(\Omega_j)}=O(h^{-1}),\quad j=1,2.
\end{equation}
Moreover, by the trace theorem, $v_1|_{\Sigma}=v_2|_{\Sigma}$ as functions in $H^{\frac{1}{2}}(\Sigma)$.
From the system \eqref{system1}, we have
\begin{align}
& (\nabla u_1, \nabla v_1)_{L^2(\Omega_1)}\notag \\
&=ih(\nabla f_1,\nabla v_1)_{L^2(\Omega_1)}-ih\|\nabla v_1\|_{L^2(\Omega_1)}^2-(\nabla u_1,\nabla v_1)_{L^2(\Omega_1)}\notag \\
& \qquad -\|a_1^{1/2}\nabla v_1\|_{L^2(\Omega_1)}^2+(\partial_{\nu}u_1+a_1\partial_{\nu}v_1,v_1)_{L^2(\Sigma)} \label{eq2}\\
&=ih^{-1}\|v_1\|_{L^2(\Omega_1)}^2+(g_1,v_1)_{L^2(\Omega_1)}-(\nabla u_2, \nabla v_2)_{L^2(\Omega_2)}\label{eq3}\\
&=ih(\nabla f_2,\nabla v_2)_{L^2(\Omega_2)}-ih\|\nabla v_2\|_{L^2(\Omega_2)}^2-(\nabla u_2,\nabla v_2)_{L^2(\Omega_2)}-(\partial_{\nu}u_2,v_2)_{L^2(\Sigma)}\label{eq4}\\
&=ih^{-1}\|v_2\|_{L^2(\Omega_2)}^2+(g_2,v_2)_{L^2(\Omega_2)} \label{eq5}.
\end{align}
Taking the real part of \eqref{eq2}+\eqref{eq3}-\eqref{eq4}+\eqref{eq5}, we deduce that
$ \|\nabla v_1\|_{L^2(\Omega_1)}^2=o(h)
$, thanks to the boundary condition \eqref{BC:interface} and $v_1|_{\Sigma}=v_2|_{\Sigma}$. Therefore, from the first equation of \eqref{system1}, we have $\|\nabla u_1\|_{L^2(\Omega_1)}^2=o(h^3)$. Then, using this fact and the second equation of \eqref{system1}, we deduce that
$$ iv_1=h\Delta u_1+h\nabla\cdot(a_1\nabla v_1)-hg_1=O_{H^{-1}(\Omega_1)}(h^{\frac{3}{2}}).
$$
By interpolation, we have $v_1=o_{L^2(\Omega_1)}(h)$, and from $u_1=ih(f_1-v_1)$, $u_1=o_{L^2(\Omega_1)}(h^2)$. This completes the proof of Lemma \ref{apriori}.
\end{proof}

\section{Estimates of the elliptic system}\label{sectionelliptic}

\subsection{Standard theory}
We briefly recall the semiclassical elliptic boundary value problem near the interface $\Sigma$.  
 In what follows, we will sketch the parametrix construction for \eqref{elliptic1}, following \cite{BL03}. Near a point $x_0\in\Sigma$, we use the coordinate system $(y,x')$ where $\Omega_1=\{(y,x'):y>0\}$ near $x_0$. 
 \begin{equation}\label{elliptic1} L_{\hbar}w=\kappa=o_{L^2}(\hbar^2),\quad w|_{\Omega_1}=o_{H^1}(\hbar),\; w|_{\Sigma}=o_{H^{\frac{1}{2}}}(\hbar)
 \end{equation}
 where in the local coordinate chart, 
$$ L_{\hbar}:=\hbar^2D_y^2-R(y,x',\hbar D_{x'})+\sum_{j=1}^{d-1}\hbar M_j(y,x')\hbar\partial_{x_j'}+\hbar H(y,x')\hbar\partial_y.
$$
Here $R(y,x',\hbar D_{x'})$ is a second order semiclassical differential operator in $x'$ with the principal symbol $r(y,x',\xi')$. The principal symbol of
$L_{\hbar}$ is
$$ l(y,x',\eta,\xi')=\eta^2-r(y,x',\xi'),
$$
and we denote by
$$ m(y,x',\eta,\xi')=\sum_{j=1}^{d-1}M_j(y,x')\xi_j'+H(y,x')\eta. 
$$
The set of elliptic points in $T^*\partial\Omega$ is given by
$$ \mathcal{E}:=\{(y=0,x',\xi'): r(0,x',\xi')<0 \}
$$
By homogeneity, near a point $\rho_0\in\mathcal{E}$
\begin{equation}\label{localization-elliptic}
 -r(y,x',\xi')\geq c(\rho_0)|\xi'|^2.
\end{equation}
 Denote by $\ud{w}:=w\mathbf{1}_{y\geq 0}$ the extension by zero of $w$, and the same for $\ud{\kappa},$ etc. Then $\ud{w}$ satisfies the equation
\begin{equation}\label{elliptic2}
L_{\hbar}\ud{w}=-\hbar(\hbar\partial_yw)|_{y=0}\otimes\delta_{y=0}+\hbar^2w|_{y=0}\otimes\delta'_{y=0}+\hbar^2H(0,x')w|_{y=0}\otimes\delta_{y=0}+\ud{\kappa}.
\end{equation} 
Let $\varphi(y,x')$ be a cut-off to the local chart. Let $\psi\in C^{\infty}(\R^{d-1})$, be a Fourier multiplier in $S_{\xi'}^0$ such that on the support of $\varphi(y,x')\psi(\xi')$, \eqref{localization-elliptic} holds and $\varphi(y,x')\psi(\xi')=1$ near $\rho_0$. We define
\begin{equation}\label{symbol:elliptic}
 e^0(y,x',\eta,\xi'):=\frac{\varphi(y,x')\psi(\xi')}{l(y,x',\eta,\xi')}
\end{equation}
and $e^j,j\geq 1$ inductively by
\begin{equation*}
\begin{split}
e^1\cdot l=&-\sum_{|\alpha|=1}\frac{1}{i}\partial_{\xi',\eta}^{\alpha}e^0\cdot\partial_{x',y}^{\alpha}l-e^0\cdot m,\\
e^j\cdot l=-&\sum_{|\alpha|+k=n,k\neq n}\frac{1}{i^{|\alpha|}}\partial_{\xi',\eta}^{\alpha}e^k\cdot\partial_{x',y}^{\alpha}l-\sum_{|\alpha|+k=n-1}\frac{1}{i^{|\alpha|}}\partial_{\xi',\eta}^{\alpha}e^k\cdot\partial_{x',y}^{\alpha}m.
\end{split}
\end{equation*}
For any $N\in\N$, we define
$$ e_N=\sum_{j=0}^N\hbar^je^j,\quad E_N=\mathrm{Op}_{\hbar}(e_N),
$$
and then
$$  E_NL_{\hbar}=\varphi(y,x')\psi(\xi')\mathrm{Id}+R_N,
$$
where
$$ R_N=\mathcal{O}(\hbar^{N+1}): L_{x',y}^2\rightarrow L_{x',y}^2,\quad R_N=\mathcal{O}(\hbar^{N+1-2M}): H_{x',y}^s\rightarrow H_{x',y}^{s+2M},
$$
and
$$ E_N=\mathcal{O}(1):L_{x',y}^2\rightarrow L_{x',y}^2,\quad E_N=\mathcal{O}(\hbar^{-2}): H_{x',y}^s\rightarrow H_{x',y}^{s+2},
$$
thanks to Lemma \ref{technical1}.
Applying $E_N$ to the equation \eqref{elliptic2}, we obtain that
\begin{equation*}
\begin{split}
\varphi(y,x')\psi(\hbar D_{x'})\ud{w}=&-\hbar^2E_N((\partial_{y}w)|_{y=0}\otimes\delta_{y=0})+\hbar^2E_N(w|_{y=0}\otimes\delta'_{y=0} )+\hbar^2E_N(Hw|_{y=0}\otimes\delta_{y=0} )\\+&E_N\ud{\kappa}-R_N\ud{w}.
\end{split}
\end{equation*}
Note that $e_N(y,x',\eta,\xi')$ is meromorphic in $\eta$ with poles $\eta_{\pm}=\pm i\sqrt{-r(y,x',\xi')}$.
Denote by $G(x')=\partial_yw(0,x')+H(0,x')w(0,x')$, we calculate for $y>0, x'\in\R^{d-1}$ that
\begin{equation*}
\begin{split}
&\hbar^2E_N((\partial_yw+Hw)|_{y=0}\otimes\delta_{y=0})(y,x')\\=&\frac{\hbar^2}{(2\pi\hbar)^d}\int G(\widetilde{x}')\mathrm{e}^{\frac{i(x'-\widetilde{x}')\cdot\xi'}{\hbar}}d\widetilde{x}'d\xi'\int e_N(y,x',\eta,\xi')\mathrm{e}^{\frac{iy\eta}{\hbar}}d\eta\\
=&\frac{i\hbar}{(2\pi\hbar)^{d-1}}\int \mathrm{e}^{\frac{iy\eta_+}{\hbar}}n_N(y,x',\xi')\mathrm{e}^{\frac{i(x'-\widetilde{x'})\cdot\xi'}{\hbar}}G(\widetilde{x}')d\widetilde{x}'d\xi',
\end{split}
\end{equation*}
where $n_N(y,x',\xi')=\mathrm{Res}(e_N(y,x',\eta,\xi');\eta=\eta_+)$. Similarly, for $y>0, x'\in\R^{d-1}$,
\begin{equation*}
\begin{split}
\hbar^2E_N(w|_{y=0}\otimes\delta'_{y=0})(y,x')=&\frac{i\hbar}{(2\pi\hbar)^d}\int w(0,\widetilde{x}')\mathrm{e}^{\frac{i(x'-\widetilde{x}')\cdot\xi'}{\hbar}}d\widetilde{x}'d\xi'\int \eta\mathrm{e}^{\frac{iy\eta}{\hbar}}
e_N(y,x',\eta,\xi')d\eta\\
=&-\frac{1}{(2\pi\hbar)^{d-1}}\int \mathrm{e}^{\frac{iy\eta_+}{\hbar}}d_N(y,x',\xi')\mathrm{e}^{\frac{i(x'-\widetilde{x}')\cdot\xi'}{\hbar}}w(0,\widetilde{x}')d\widetilde{x}'d\xi',
\end{split}
\end{equation*}
where $d_N=\eta_+n_N$. Therefore,
\begin{multline}\label{pseudo-relation}
\varphi(y,x')\psi(\hbar D_{x'})\ud{w}\\
=i\mathrm{Op}_{\hbar}(\mathrm{e}^{iy\eta_+/\hbar}n_N(y,\cdot))\big(-(\hbar\partial_yw)|_{y=0}+\hbar (Hw)|_{y=0}\big)-\mathrm{Op}_{\hbar}(\mathrm{e}^{iy\eta_+/\hbar}d_N(y,\cdot))(w|_{y=0})\\+E_N\ud{\kappa}-R_N\ud{w},
\end{multline}
where the two operators in the expression above are tangential. Note that by Lemma \ref{technical1}
$$ R_N\ud{w},\quad E_N\ud{\kappa}=o_{L_{x',y}^2}(\hbar^2)=o_{H_{x',y}^2}(1),
$$
hence from the interpolation and the trace theorem, we have 
$$ (R_N\ud{w})|_{y=0}=o_{H_{x'}^{1/2}}(\hbar),\quad (E_N\ud{\kappa})|_{y=0}=o_{H_{x'}^{1/2}}(\hbar).
$$
Taking the trace $y=0$ for \eqref{pseudo-relation}, we obtain that
\begin{equation}\label{trace-relation} \mathrm{Op}_{\hbar}(\varphi(0,x')\psi(\xi')+d_N(0))(w|_{y=0})=-\mathrm{Op}_{\hbar}(in_N(0))((\hbar\partial_yw)|_{y=0}+\hbar(Hw)|_{y=0} )+o_{H_{x'}^{1/2}}(\hbar).
\end{equation}
Note that the principal symbols of $n_N(0), d_N(0)$ are
$$ \sigma(in_N(0))=\frac{\varphi(0,x')\psi(\xi')}{2\sqrt{-r(0,x',\xi')}},\quad \sigma(d_N(0))=\frac{\varphi(0,x')\psi(\xi')}{2}.
$$
In summary, there exists (near $\rho_0$)  a $\hbar$-P.d.O $\mathcal{N}_{\hbar}$, elliptic and of order 1 classic and of order 0 semi-classic, in the sense that
$$ \mathcal{N}_{\hbar}=\mathcal{O}(\hbar): H_{x'}^s\rightarrow H_{x'}^{s-1},
$$
such that 
$$ (\hbar\partial_yw)|_{y=0}=\mathcal{N}_{\hbar}(w|_{y=0}+ O_{H^{1/2}}(\hbar)).
$$
\subsection{Control of the semi-classical wave-front set of the trace}
For the further need, we should also control the wave front set of the precise elliptic equation (with $\hbar=h^{\frac{1}{2}}$) $$\hbar^2\Delta w-\frac{i}{a_1}w+\hbar\frac{\nabla a_1}{a_1}\cdot\hbar\nabla w=\kappa,
$$
where the $h$-semiclassical wave front set of the Neumann data
$ \mathrm{WF}_h(\partial_{\nu}w|_{\Sigma}).
$
 Here we need to pay attention to two different semi-classical scales.

\begin{prop}\label{control:frontdonde}
 Assume that $w$ satisfies the $\hbar$-semiclassical elliptic equation (with $\hbar=h^{\frac{1}{2}}$)
$$ \hbar^2\Delta w-\frac{i}{a_1}w+\hbar\frac{\nabla a_1}{a_1}\cdot\hbar\nabla w=\kappa
$$
with Neumann trace $\partial_{\nu}w|_{\Sigma}$ and $ \mathrm{WF}_h(\partial_{\nu}w|_{\Sigma})
$ is contained in a compact subset of $T^*\Sigma\setminus\{0\}$. Assume moreover that $w=O_{H^1}(h^{\frac{1}{2}})$ and $\kappa=O_{L^2}(h)$, then we have
$$ \mathrm{WF}_h(w|_{\Sigma})\subset \mathrm{WF}_h(\partial_{\nu}w|_{\Sigma})\cup \pi\big(\mathrm{WF}_h(\kappa)\big),
$$
where $\pi: T^*\Omega_1\rightarrow T^*\Sigma$ is the projection defined for points near $T^*\Sigma$, and 
$$ \pi\big(\mathrm{WF}_h(\kappa)\big)=\big\{\rho_0\in T^*\Sigma: \exists \rho\in T^*\Omega_1, \text{ near } T^*\Sigma, \text{ such that } \rho\in \mathrm{WF}_h(\kappa) \text{ and } \pi(\rho)=\rho_0 \big\}.
$$
\end{prop}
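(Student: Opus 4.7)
The plan is to adapt the parametrix construction of Subsection~3.1 from the $\hbar$-scale to the $h$-scale. The key observation is that after multiplying the equation by $h$, one obtains
\[
Q_h w := h^2 \Delta w - \frac{ih}{a_1} w + h\frac{\nabla a_1}{a_1} \cdot h\nabla w = h\kappa,
\]
and $Q_h$ is an $h$-semiclassical operator whose $h$-principal symbol equals $-|\xi|^2$, with $O(h)$ subprincipal corrections. In boundary normal coordinates $(y,x')$ with $\Omega_1=\{y>0\}$, this principal symbol reads $-(\eta^2 + |\xi'|^2_g)$ and is nonzero whenever $\xi'\neq 0$. Hence every $\rho_0=(x_0',\xi_0') \in T^*\Sigma\setminus 0$ is $h$-elliptic for $Q_h$ at the boundary, even though the $\hbar$-scale identity $(\hbar\partial_y w)|_\Sigma=\mathcal{N}_\hbar(w|_\Sigma)+O_{H^{1/2}}(\hbar)$ from Subsection~3.1 carries only an $O_{H^{1/2}}(\hbar)$ error, too coarse to detect the finer $h$-wave front set.

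Fixing $\rho_0$ with $\rho_0\notin \WFh(\partial_\nu w|_\Sigma)\cup \pi(\WFh(\kappa))$, I would pick cutoffs $\varphi(y,x')$ near $(0,x_0')$ and $\psi(\xi')$ near $\xi_0'$ (supported away from $\xi'=0$), and replay the symbolic iteration of Subsection~3.1 with $\hbar$ replaced by $h$ throughout. This produces symbols $e_j^{(h)}$ and a parametrix $E^{(h)}_N=\mathrm{Op}_h\bigl(\sum_{j\le N}h^j e_j^{(h)}\bigr)$ satisfying $E^{(h)}_N Q_h=\varphi\psi\,\mathrm{Id}+R^{(h)}_N$ with $R^{(h)}_N=O(h^{N+1})$ on $L^2$. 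Acting on the extension-by-zero identity analogous to~\eqref{elliptic2} for $Q_h\ud{w}$ and taking the trace at $y=0$ then yields a relation of the form
\[
\mathrm{Op}_h\bigl(\varphi\psi + d^{(h)}_N(0)\bigr)(w|_\Sigma)
= -\mathrm{Op}_h\bigl(i n^{(h)}_N(0)\bigr)\bigl((h\partial_y w)|_\Sigma + h(Hw)|_\Sigma\bigr)
+ (E^{(h)}_N \ud{h\kappa})|_\Sigma - (R^{(h)}_N \ud{w})|_\Sigma,
\]
with $\sigma(d_N^{(h)}(0))=\varphi\psi/2$ and $\sigma(in_N^{(h)}(0))=\varphi\psi/(2|\xi'|_g)$, so that the left-hand $h$-PDO is $h$-elliptic at $\rho_0$. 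Inverting microlocally expresses $w|_\Sigma$ near $\rho_0$ as an $h$-PDO applied to $(h\partial_\nu w)|_\Sigma$ plus an $h$-PDO applied to the boundary trace of $E^{(h)}_N \ud{h\kappa}$, modulo $(R^{(h)}_N \ud{w})|_\Sigma$.

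The main obstacle is to show that both error terms are $O_{H^{1/2}(\Sigma)}(h^\infty)$ microlocally at $\rho_0$. For $(R^{(h)}_N \ud{w})|_\Sigma$, the smoothing-type mapping bounds for $R^{(h)}_N$ combined with $\|w\|_{L^2}=O(h^{1/2})$ and the trace theorem yield arbitrarily high powers of $h$ upon letting $N\to\infty$. For $(E^{(h)}_N \ud{h\kappa})|_\Sigma$, I would exploit $\rho_0\notin\pi(\WFh(\kappa))$: the $h$-pseudolocal character of $E^{(h)}_N$ in the tangential variables $(x',\xi')$ implies that its action near $\rho_0$ only probes $\ud{h\kappa}$ on (a neighbourhood of) the fiber $\pi^{-1}(\rho_0)$, where by hypothesis $\ud{h\kappa}=O_{L^2}(h^\infty)$, so the boundary trace inherits this $O(h^\infty)$ smallness at $\rho_0$. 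Finally, the zeroth-order perturbation $-ih/a_1$ feeds only into the subprincipal iteration $(e_j^{(h)})_{j\ge 1}$ and does not affect the leading symbol analysis. Combining these bounds yields $\rho_0\notin\WFh(w|_\Sigma)$, which gives the claimed inclusion.
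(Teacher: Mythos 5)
Your proposal replaces the paper's $\hbar=h^{1/2}$-scale parametrix (with $h$-scale cutoffs glued on top via the two-scale commutator Lemma~\ref{commutator:2echelles} and an iteration to push the error down) by a direct $h$-scale parametrix for $Q_h=h^2\Delta+O(h)$. That is a genuinely different route, and it does buy something: at a single scale you never need Lemma~\ref{commutator:2echelles}, and the parametrix remainder $R_N^{(h)}=O(h^{N+1})$ gives $O(h^\infty)$ at once, without the iterative bootstrap the paper runs (``Then we can continuously apply this argument to conclude''). Your side remark that the identity $(\hbar\partial_y w)|_\Sigma=\mathcal{N}_\hbar(w|_\Sigma)+O_{H^{1/2}}(\hbar)$ is ``too coarse'' misreads the paper's proof, though: it does not stop at that $O(\hbar)$ statement but iterates the parametrix relation~\eqref{pseudo-relation} to reach arbitrarily high powers.

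The genuine gap is at $\xi'=0$. The whole point of keeping the equation at scale $\hbar$ is that the $\hbar$-principal symbol $-\eta^2-|\xi'|_g^2-i/a_1$ is elliptic \emph{everywhere}, including the zero section, which is why the paper can take $\psi\equiv 1$ in~\eqref{symbol:elliptic}. After you multiply by $h$, the $h$-principal symbol of $Q_h$ is $-\eta^2-|\xi'|_g^2$; it degenerates at $\xi'=\eta=0$, the term $-ih/a_1$ is now only subprincipal, and the two roots $\eta_\pm=\pm i|\xi'|_g$ coalesce. Your construction therefore necessarily lives on $\mathrm{supp}\,\psi\subset\{|\xi'|\geq\delta>0\}$ and cannot say anything about the zero section. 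But the conclusion of Proposition~\ref{control:frontdonde} is an unconditional inclusion in $T^*\Sigma$, and the way it is used in Section~5 really does require excluding the zero section: the chain $\WFh(w^{(0)}|_\Sigma)\subset\WFh(\partial_\nu w^{(0)}|_\Sigma)\subset\mathcal{H}_\delta(\Sigma)$ is what justifies the high-frequency estimate $\|w^{(0)}|_\Sigma\|_{H^1(\Sigma)}\sim h^{-1/2}\|w^{(0)}|_\Sigma\|_{H^{1/2}(\Sigma)}$, and this would fail if $w^{(0)}|_\Sigma$ were allowed to carry $O(1)$ mass at $h$-scale frequencies $o(1)$. So the low-frequency regime is not a removable technicality; it is precisely the reason the paper works at scale $\hbar$. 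A secondary, smaller issue is the sentence about ``$h$-pseudolocal character\dots in the tangential variables'': $E_N^{(h)}$ is a full $h$-pseudodifferential operator in $(y,x',\eta,\xi')$, not a tangential one, so to conclude you actually need a disjoint-support composition statement for a tangential cutoff $A_h$ sandwiching $E_N^{(h)}$ against another tangential cutoff away from $\WFh(\kappa)$ (a one-scale analogue of Lemma~\ref{commutator:2echelles}); this is true but should be stated rather than absorbed into ``pseudolocality''.
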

\begin{proof}
Let $(x_0,\xi_0)\notin \mathrm{WF}_h(\partial_{\nu}w|_{\Sigma})\cup\pi\big(\mathrm{WF}_h(\kappa)\big)$. Locally near $x_0\in\Sigma$, we can choose local coordinate system as in the previous subsection. Here the cutoff $\psi(\xi')$ can be chosen as $1$, since the operator $\hbar^2\Delta-i$ is always elliptic. Consider the tangential $h$-P.d.O  $A_h$ which is elliptic near $(x_0,\xi_0)$ and its principal symbol is supported away from $\mathrm{WF}_h(\partial_{\nu}w|_{\Sigma})\cup \pi\big(\mathrm{WF}_h(\kappa)\big)$. We need to show that $(A_hw)|_{y=0}=O_{L^2(\Sigma)}(h^{\infty})$. 

From \eqref{pseudo-relation}  we have
\begin{align*}
 \varphi(y,x')\ud{w}=&i\mathrm{Op}_{\hbar}\big(\mathrm{e}^{\frac{iy\eta_+}{\hbar}}n_N(y) \big)\big(-(\hbar\partial_yw)|_{y=0}+\hbar(Hw)|_{y=0} \big)-\mathrm{Op}_{\hbar}\big(\mathrm{e}^{\frac{iy\eta_+}{\hbar}}d_N(y) \big)(w|_{y=0})\\+&E_N\ud{\kappa}+O_{H^{1}}(h^{\frac{N}{2}}),
\end{align*}
where we gain $\hbar^N$ for $R_N\ud{w}$. By taking the trace $y=0$ and using the fact that $d_N(0)=\frac{1}{2}\varphi(0,x'),$ we obtain that
\begin{align*}
&(A_h\varphi(y,x')\ud{w})|_{y=0}+\big(A_h\mathrm{Op}_h(d_N(0))w\big)|_{y=0}-i\hbar (A_h\mathrm{Op}_{\hbar}(n_N)(Hw )|_{y=0}\\=&-iA_h\mathrm{Op}_{\hbar}(n_N(0))(\hbar\partial_{y}w)|_{y=0}+ (A_hE_N\ud{\kappa})|_{y=0}
+O_{H_{y=0}^{\frac{1}{2}}}(h^{N/2}).
\end{align*}
We claim that it suffices to show that
\begin{align}\label{WFh1} A_h\mathrm{Op}_{\hbar}(n_N(0))(\hbar\partial_{y}w)|_{y=0}=O_{L_{y=0}^2}(h^{\infty}) \text{ and } (A_hE_N\ud{\kappa})|_{y=0}=O_{L_{y=0}^2}(h^{\infty}).
\end{align}
Indeed, once this is done, we obtain that, at least $(A_h\ud{w})|_{y=0}=O_{L^2}(\hbar)$\footnote{The operator $A_h$ is of the form $\chi A_h\chi$ for some cutoff $\chi$ such that $\chi\equiv 1$ on supp$(\varphi)$. }.
Now we can replace $A_h$ by another tangential operator $\widetilde{A}_h$ with principal symbol $\widetilde{a}$ such that $\widetilde{a}$ is supported in a slightly larger region containing supp$(a)$ and $\widetilde{a}=1$ on supp$(a)$. We still have $(\widetilde{A}_h\ud{w})|_{y=0}=O_{L^2}(\hbar)$. Now we write
$$ \hbar A_h\mathrm{Op}_{\hbar}(n_N)Hw=\hbar A_h\mathrm{Op}_{\hbar}(n_N)H\widetilde{A}_hw+
\hbar A_h\mathrm{Op}_{\hbar}(n_N)H(1-\widetilde{A}_h)w.
$$
From Lemma \ref{commutator:2echelles}, the trace of the second term on the right side is $\mathcal{O}_{L^2}(h^{\infty})$. Therefore, the trace of the first term on the right side is $O_{L^2}(\hbar^2)$, hence $(A_h\ud{w})|_{y=0}=O_{L^2}(\hbar^2)$. Then we can continuously apply this argument to conclude.  

It remains to prove \eqref{WFh1}. For this, we just need to interchange the operator $A_h$ with $E_N$ and $\mathrm{Op}_{\hbar}(n_N(0))$. 
Here additional attentions are needed, since $\mathrm{Op}_{\hbar}(n_N(0)), E_N$ are $\hbar$-P.d.O. This can be verified from the following lemma:
\begin{lem}\label{commutator:2echelles}
 Assume that $a,b,q\in S^0(\R_x^n\times\R_{\xi}^n)$, compactly supported in the $x$ variable such that
$$ \mathrm{dist}\big(\mathrm{supp}(a),\mathrm{supp}(b) \big)\geq c_0>0.
$$
 Then for any $s\in\R, N\in\N, N\geq 2n$, we have
$$ a(x,hD_x)q(x,h^{\frac{1}{2}}D_x) b(x,hD_x)=\mathcal{O}_{L^2\rightarrow L^2}(h^{N}).
$$
\end{lem}
\begin{proof}
Denote by
$$ A(x,y,\xi,\eta)=a(x,h(\xi+\eta))q(x+y,h^{\frac{1}{2}}\xi).
$$
Then from Lemma \ref{symbolic}, 
\begin{align*}
a(x,hD_x)q(x,h^{\frac{1}{2}}D_x)=&\sum_{|\beta|\leq N}\mathrm{Op}\big(\frac{h^{|\beta|}}{i^{|\beta|}\beta!}(\partial_{\xi}^{\beta}a)(x,h\xi)(\partial_x^{\beta}q)(x,h^{\frac{1}{2}}\xi) \big)\\
+&O_{\mathcal{L}(L^2)}(h^{N+1-n}),
\end{align*}
since for any $\beta\in \N^{2n}$,
$$ \sup_{|\alpha|=N+1}\sup_{(x,\xi)}\iint_{\R^{2n}}|\partial_{x,\xi}^{\beta}\partial_z^{\alpha}\partial_{\zeta}^{\alpha}(a(x,h(\xi+\zeta)) q(x+z,h^{\frac{1}{2}}\xi) ) |dz d\zeta =O(h^{N+1-n}).
$$
 Using the fact that $\frac{h^{|\beta|}}{i^{|\beta}|\beta!}\partial_{\xi}^{\beta}a\cdot \partial_x^{\beta}q\cdot b=0$, thanks to the support property, we have, using again Lemma \ref{symbolic}, 
$$ a(x,hD_x)q(x,h^{\frac{1}{2}}D_x)b(x,hD_x)=O_{\mathcal{L}(L^2)}(h^N)
$$
for any $N$ large enough. This completes the proof.
\end{proof}
Therefore the proof of Proposition \ref{control:frontdonde} is now complete.
\end{proof}

\subsection{Estimate of the traces}
Let $u_1,v_1$ be solutions of the first two equations of \eqref{system1}. Consider $w=u_1+a_1v_1$, then under the assumption of Lemma \ref{apriori},
$$ w=o_{H^1}(h^{\frac{1}{2}}),\quad w=o_{L^2}(h),\quad w|_{\Sigma}=o_{H^{\frac{1}{2}}}(h^{\frac{1}{2}}).
$$
Note that $w$ satisfies the elliptic equation (with $\hbar=h^{\frac{1}{2}}$)
\begin{equation}\label{w:elliptic}
\begin{split}
\hbar^2\Delta w+\hbar\frac{\nabla a_1}{a_1}\cdot \hbar\nabla w-\frac{i}{a_1}w=\hbar^2g_1-\hbar^2\Delta a_1\cdot v_1+\hbar^2\frac{|\nabla a_1|^2}{a_1}v_1-\frac{\hbar \nabla a_1}{a_1}\cdot\hbar\nabla u_1+\frac{i}{a_1}u_1
\end{split}
\end{equation}

In particular,
$$ \hbar^2\Delta w-\frac{i}{a_1}w+\hbar\frac{\nabla a_1}{a_1}\cdot \hbar\nabla w=o_{L^2}(\hbar^2).
$$
In this case, $\mathcal{N}_{\hbar}$ defined in the last subsection is the usual $\hbar$-semiclassical Dirichlet-Neumann operator:
$$ \mathcal{N}_{\hbar}(w|_{\Sigma}+ o_{H^{1/2}} ( \hbar^2)):=(\hbar\partial_{\nu}w)|_{\Sigma}.
$$ We can apply the standard theory (to $h^{-1} w$)with the particular choice $\psi(\xi')\equiv 1$ in \eqref{symbol:elliptic} and obtain the following:
\begin{prop}\label{estimatetrace1}
Let $\chi\in C_c^{\infty}(\R)$. Then under the hypothesis of Lemma \ref{apriori} and in the local chart near $\Sigma$, we have $\varphi\chi(hD_{x'})\varphi (\partial_{y}w)|_{y=0}=o_{L_{x'}^2}(1)$, where $h=\hbar^2$. Consequently, 
$$ u_2|_{\Sigma}=o_{H^{1/2}}(h^{\frac{3}{2}}),\quad \varphi\chi(hD_{x'})\varphi h\partial_yu_2|_{y=0}=o_{L^2(\Sigma)}(h).
$$	
\end{prop}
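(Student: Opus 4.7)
The plan is to apply the semi-classical Dirichlet-to-Neumann identity at scale $\hbar=h^{1/2}$ from the previous subsection to the function $w=u_1+a_1v_1$, and to combine it with the a priori $H^{1/2}(\Sigma)$ smallness of $w|_\Sigma$ inherited from Lemma~\ref{apriori}. Recall that $w$ satisfies~\eqref{w:elliptic} with $L_\hbar w=o_{L^2}(\hbar^2)$, $w=o_{H^1}(\hbar)$ and $w|_\Sigma=o_{H^{1/2}}(\hbar)$, and that the principal symbol $-r=|\xi'|^2+i/a_1$ is elliptic everywhere near $\Sigma$, so we may take $\psi\equiv 1$ in~\eqref{symbol:elliptic}. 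Tracking the parametrix errors $R_N\ud{w}$ and $E_N\ud{\kappa}$ by the mapping properties stated in the previous subsection, together with the trace theorem and $L^2/H^2$ interpolation, yields the identity
\begin{equation*}
(\hbar\partial_y w)|_{y=0}=\mathcal{N}_\hbar(w|_\Sigma)+r_\hbar,\qquad \|r_\hbar\|_{H^{1/2}(\Sigma)}=o(\hbar).
\end{equation*}

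For the main estimate, $\|\mathcal{N}_\hbar\|_{H^{1/2}\to H^{-1/2}}=O(\hbar)$ combined with $w|_\Sigma=o_{H^{1/2}}(\hbar)$ gives $\mathcal{N}_\hbar(w|_\Sigma)=o_{H^{-1/2}}(\hbar^2)$. The multiplier $\chi(h\xi')$ is supported on $|\xi'|\lesssim h^{-1}=\hbar^{-2}$, so a direct Fourier-side computation shows that for any $u\in H^{-1/2}(\Sigma)$,
\begin{equation*}
\|\chi(hD_{x'})u\|_{L^2}^2\lesssim \int_{|\xi'|\lesssim \hbar^{-2}}\langle\xi'\rangle\cdot\langle\xi'\rangle^{-1}|\hat u(\xi')|^2\,d\xi'\lesssim \hbar^{-2}\|u\|_{H^{-1/2}}^2.
\end{equation*}
Applied to $\mathcal{N}_\hbar(w|_\Sigma)$ and to $r_\hbar$ (using the embedding $H^{1/2}\hookrightarrow H^{-1/2}$ for the latter), this yields $\|\chi(hD_{x'})\mathcal{N}_\hbar(w|_\Sigma)\|_{L^2}=O(\hbar^{-1})\cdot o(\hbar^2)=o(\hbar)$ and $\|\chi(hD_{x'})r_\hbar\|_{L^2}=o(\hbar)$. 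Dividing the DtN identity by $\hbar$ and reinstating the spatial cutoffs $\varphi$ (which commute with the $h$-pseudo-differential operator $\chi(hD_{x'})$ modulo $O(h^\infty)$ remainders) produces $\|\varphi\chi(hD_{x'})\varphi(\partial_y w)|_{y=0}\|_{L^2}=o(1)$.

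The first transmission condition in~\eqref{BC:interface} gives $u_2|_\Sigma=u_1|_\Sigma$, so $\|u_2|_\Sigma\|_{H^{1/2}}=o(h^{3/2})$ follows from Lemma~\ref{apriori}. Using $w=u_1+a_1v_1$ in the second equation of~\eqref{BC:interface} yields $\partial_\nu u_2|_\Sigma=\partial_\nu w|_\Sigma-(\partial_\nu a_1)v_1|_\Sigma$, and the trace theorem with $\|v_1\|_{H^1}=o(h^{1/2})$ bounds the extra term by $o(h^{3/2})$ in $L^2(\Sigma)$; multiplying the first claim by $h$ therefore gives $\|\varphi\chi(hD_{x'})\varphi h\partial_y u_2|_{y=0}\|_{L^2}=o(h)$. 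The main obstacle of the argument is the mismatch of the two semi-classical scales: $\chi(hD_{x'})$ admits frequencies up to $\hbar^{-2}$, far beyond the natural range $\hbar^{-1}$ of the $\hbar$-calculus underlying $\mathcal{N}_\hbar$, and the resulting $\hbar^{-1}$ loss in the $H^{-1/2}\to L^2$ step is only balanced by the extra factor $\hbar$ carried by $\mathcal{N}_\hbar$ together with the critical $o(\hbar)$ smallness of $w|_\Sigma$; no further room remains, so the estimate would fail for $w$ merely $O_{H^{1/2}}(\hbar)$.
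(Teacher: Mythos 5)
Your proposal follows essentially the same route as the paper: invoke the $\hbar$-Dirichlet--to--Neumann identity of the previous subsection with $\psi\equiv 1$ (permissible since $|\xi'|^2+i/a_1$ never vanishes), feed in the a priori smallness $w|_\Sigma=o_{H^{1/2}}(\hbar)$ from Lemma~\ref{apriori}, use $\mathcal N_\hbar=\mathcal O(\hbar):H^{1/2}\to H^{-1/2}$, and then pay the unavoidable $\hbar^{-1}$ factor coming from $\chi(hD_{x'}):H^{-1/2}\to L^2$ (this is exactly Lemma~\ref{technical1} with $\theta=1/2$, though you derived it by hand on the Fourier side, which is fine). The scale-mismatch remark at the end of your proof is the right intuition and explains why the bound is sharp.

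One small but genuine slip: you bound $\chi(hD_{x'})r_\hbar$ by embedding $r_\hbar\in o_{H^{1/2}}(\hbar)$ into $H^{-1/2}$ and then applying the $O(\hbar^{-1})$ bound $\chi(hD_{x'}):H^{-1/2}\to L^2$; that chain only yields $o(1)$, not the $o(\hbar)$ you record, and $o(1)$ would be useless after the subsequent division by $\hbar$. The correct (and simpler) path is $H^{1/2}(\Sigma)\hookrightarrow L^2(\Sigma)$, giving $r_\hbar=o_{L^2}(\hbar)$, together with the uniform $L^2$-boundedness of $\chi(hD_{x'})$ (Lemma~\ref{technical1} with $\theta=0$), which does give $\chi(hD_{x'})r_\hbar=o_{L^2}(\hbar)$ as needed. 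A cosmetic point in the same vein: the commutator $[\varphi,\chi(hD_{x'})]$ is only $O_{\mathcal L(L^2)}(h)$, not $O(h^\infty)$; this still suffices here since the commutator term lands at a lower order, but the claim as written is an over-statement. Finally, in the second consequence, the extra term $(\partial_\nu a_1)\,v_1|_\Sigma$ is $o_{L^2(\Sigma)}(h^{1/2})$ (not $o(h^{3/2})$ as written), but the overall factor of $h$ that multiplies $\partial_\nu u_2$ restores the required $o_{L^2}(h)$, so the conclusion stands.
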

\begin{proof}
 Assume that $\varphi,\varphi_1$ are supported in a local chart and satisfy $\varphi_1|_{\text{supp}(\varphi)}=1$. In apriori, we have $\varphi\partial_{y}(\varphi_1w)|_{y=0}=\varphi\hbar^{-1}\mathcal{N}_{\hbar}((\varphi_1w)|_{y=0})=o_{H_{x'}^{-1/2}}(\hbar)$. Thus by Lemma \ref{technical1} we have
 $\varphi\hbar^{-1}\chi(\hbar^2D_{x'})\varphi\mathcal{N}_{\hbar}((\varphi_1w)|_{y=0})=o_{L_{x'}^2}(1)$.
 
\end{proof}
\section{Propagation estimate}
In this section, we will deal with the propagation estimate for $u_2$ in $H^1$, satisfying
\begin{equation}\label{eq:u2}
\begin{split}
& (h^2\Delta+1)u_2=ihf_2+h^2g_2=o_{H^1}(h^2)+o_{L^2}(h^3),\text{ in } \Omega_2,\\
&\|u_2\|_{H^1(\Omega_2)}=O(1),\; \|u_2|_{\Sigma}\|_{H^{1/2}(\Sigma)}=o(h^{3/2}),\\ &\|h\partial_{\nu}u_2|_{\Sigma}\|_{H^{-1/2}(\Sigma)}=o(h^{3/2}),\;\|\varphi\psi(hD_{x'})\varphi h\partial_{\nu} u_2|_{\Sigma}\|_{L^2(\Sigma)}=o(h).
\end{split}
\end{equation}
Set $w_2=h^{-1}u_2$. From \eqref{eq:u2}, 
$$ -\|\nabla u_2\|_{L^2(\Omega_2)}^2+\|h^{-1}u_2\|_{L^2(\Omega_2)}^2=\big\langle(\partial_{\nu}u_2)|_{\Sigma}\cdot u_2|_{\Sigma}\big\rangle^{H^{1/2}(\Sigma)}_{H^{-1/2}(\Sigma)}+o(1)=o(1).
$$
Hence we could equivalently deal with the propagation estimate for $w_2$ in $L^2$, satisfying
\begin{equation}\label{eq:w2}
\begin{split}
& (h^2\Delta+1)w_2=if_2+hg_2=o_{H^1}(h)+o_{L^2}(h^2),\text{ in } \Omega_2,\\
&\|w_2\|_{H^1(\Omega_2)}=O(h^{-1}),\|w_2\|_{L^2(\Omega_2)}=O(1),\; \|w_2|_{\Sigma}\|_{H^{1/2}(\Sigma)}=o(h^{1/2}),\\ &\|h\partial_{\nu}w_2|_{\Sigma}\|_{H^{-1/2}(\Sigma)}=o(h^{1/2}),\; \|\varphi \psi(hD_{x'})\varphi h\partial_{\nu}w_2|_{\Sigma}\|_{L^2(\Sigma)}=o(1).
\end{split}
\end{equation}
The goal of this section is to prove the invariance of the semiclassical measure $\mu$ associated with (a subsequence of) $w_2$ and finally prove that $\mu=0$ from the boundary conditions in \eqref{eq:w2} on the interface $\Sigma$. This will end the contradiction argument.
\subsection{Propagation away from \texorpdfstring{$\Sigma$}{Sigma}}
The defect measure in the interior of $\Omega_2$ for $u_2$ is defined via the following quadratic form:
$$ \phi(Q_h,w_2)=(Q_hw_2,w_2)_{L^2(\Omega_2)}:=\int_{\Omega_2}Q_hw_2\cdot \ov{w}_2dx.
$$
\begin{prop}[Interior propagation]\label{propagation:interior}
	Let $Q_h=\widetilde{\chi}Q_h\widetilde{\chi}$ be a $h$-pseudodifferential operator of order 0, where $\widetilde{\chi}\in C_c^{\infty}(\Omega_2)$, then we have
	$$ \frac{1}{ih}\big([h^2\Delta+1,Q_h] w_2, w_2 \big)_{L^2}=o(1).
	$$	
\end{prop}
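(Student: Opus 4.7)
The plan is to reduce the commutator bracket to two pairings involving the source term $r := (h^2\Delta+1)w_2 = if_2 + hg_2$, which by hypothesis \eqref{eq:w2} satisfies $\|r\|_{L^2(\Omega_2)} = o(h)$ (since $o_{H^1}(h) \subset o_{L^2}(h)$).

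First, I would write $P := h^2\Delta + 1$ (which is formally self-adjoint on $\Omega_2$) and expand
\begin{equation*}
\bigl([P,Q_h]w_2,w_2\bigr)_{L^2} = \bigl(PQ_hw_2,w_2\bigr)_{L^2} - \bigl(Q_h Pw_2,w_2\bigr)_{L^2}.
\end{equation*}
The key observation is that since $Q_h = \widetilde\chi\, Q_h\, \widetilde\chi$ with $\widetilde\chi \in C_c^\infty(\Omega_2)$, the function $Q_h w_2$ has compact support strictly inside $\Omega_2$, so integration by parts against the Laplacian produces no boundary contributions at $\Sigma$. Hence $(PQ_hw_2,w_2)_{L^2} = (Q_hw_2, Pw_2)_{L^2}$, and using $Pw_2 = r$ this yields
\begin{equation*}
\bigl([P,Q_h]w_2,w_2\bigr)_{L^2} = (Q_hw_2,r)_{L^2} - (Q_h r, w_2)_{L^2}.
\end{equation*}

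To conclude, I would bound each term by Cauchy--Schwarz, using that $Q_h$ is uniformly bounded on $L^2$ (being an order-$0$ semi-classical pseudo-differential operator with compactly supported symbol), that $\|w_2\|_{L^2(\Omega_2)} = O(1)$, and that $\|r\|_{L^2} = o(h)$. Each pairing is therefore $o(h)$, so dividing by $ih$ gives the desired $o(1)$ bound. Since $[h^2\Delta+1, Q_h] = [h^2\Delta, Q_h]$, the statement of the proposition follows.

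I expect no serious obstacle here: the argument is essentially the standard proof of invariance of the semi-classical defect measure under the Hamiltonian flow in the interior. The only point requiring care is justifying the absence of boundary terms at $\Sigma$, which is immediate from the compact support of $\widetilde\chi$ inside $\Omega_2$. All the boundary data in \eqref{eq:w2} will only play a role in the more delicate propagation estimates up to and across the interface treated later.
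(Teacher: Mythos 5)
Your proof is correct and follows essentially the same route as the paper's: expand the commutator, integrate by parts with no boundary contributions because $Q_h = \widetilde\chi Q_h \widetilde\chi$ localizes away from $\Sigma$, substitute the equation $P w_2 = if_2 + hg_2 = o_{L^2}(h)$, and conclude by Cauchy--Schwarz after dividing by $ih$. (Incidentally, the paper's displayed computation has a sign typo in its middle line, but both your version and the paper's conclusion agree.)
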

\begin{proof}
By developing the commutator and using the equation \eqref{eq:u2}, we have
\begin{align*}
\big(\frac{1}{ih}[h^2\Delta+1,Q_h]w_2, w_2\big)=& \frac{1}{ih}\big( (h^2\Delta+1)Q_hw_2,w_2 \big)-\frac{1}{ih}\big( Q_h(if_2+hg_2), w_2 \big)\\
=& \frac{1}{ih}\big(Q_hw_2,if_2+hg_2 \big)+\frac{1}{ih}\big( Q_h(if_2+hg_2), w_2 \big)\\
=&o(1),
\end{align*}
where we used the integration by part without boundary terms, since the kernel of $Q_h$ is supported away from the boundary $\Sigma=\partial\Omega_2$. 
This completes the proof of Proposition \ref{propagation:interior}.
\end{proof}
\subsection{Geometry near the interface}\label{geometry}
Near $\Sigma=\partial\Omega_2$, we adopt the local coordinate system $x=(y,x')$ in $U:=(-\epsilon_0,\epsilon_0)_y\times X_{x'}$ for the tubular neighborhood of $\Sigma$, similar as in the previous section but with the new convention
$\Omega_2\cap U=(0,\epsilon_0)_y\times X_{x'}$. In this coordinate system, the Euclidean metric $dx^2$ is identified as the matrix
$$
\ov{g}=\left(\begin{matrix}
1 &0\\
0 &g(y,x')
\end{matrix}\right),\text{ or } \ov{g}^{-1}:=\left(\begin{matrix}
1 &0\\
0 &g^{-1}(y,x')
\end{matrix}
\right).
$$
  Near $\Sigma$, the defect measure $\mu$ for $w_2$ is defined via the quadratic form for tangential operators:
$$ \phi(Q_h,w_2):=\int_{U}Q_hw_2\cdot \ov{w}_2 \sqrt{|\ov{g}|}dydx',
$$
where $|\ov{g}|:=\mathrm{det}(\ov{g})$.
The principal symbol of the operator $P_{h,0}=-(h^2\Delta+1)$ is $$p(y,x',\eta,\xi')=\eta^2+|\xi'|_{\ov{g}}^2-1:=\eta^2+\langle\xi',\ov{g}^{-1}\xi'\rangle_{\R^{d-1}}-1.$$
By Char($P$) we denote the characteristic variety of $p$:
$$\textrm{Char}(P):=\{(x,\xi)\in T^*\mathbb{R}^{d}|_{\ov{\Omega}}:p(x,\xi)=0\}.
$$
Denote by $^bT\ov{\Omega}_2$ the vector bundle whose sections are the vector fields $X(p)$ on $\ov{\Omega}_2$ with $X(p)\in T_p\partial\Omega_2$ if $p\in\partial\Omega_2$. Moreover, denote by $^bT^*\ov{\Omega}_2$ the Melrose's compressed cotangent bundle which is the dual bundle of $^bT\ov{\Omega}_2$. Let 
$$ j:T^*\ov{\Omega}_2\rightarrow ^bT^*\ov{\Omega}_2
$$
be the canonical map. In our geodesic coordinate system near $\partial\Omega_2$, $^bT\ov{\Omega}_2$ is generated by the vector fields $\frac{\partial}{\partial x'_1},\cdot\cdot\cdot, 
\frac{\partial}{\partial x'_{d-1}},y\frac{\partial}{\partial y}$ and thus $j$ is defined by
$$ j(y,x';\eta,\xi')=(y,x';v=y\eta,\xi').
$$

Let $Z:=j(\textrm{Char}(P))$.
By writing in another way
$$p=\eta^2-r(y,x',\xi'),\; r(y,x',\xi')=1-|\xi'|_{\ov{g}}^2,
$$
we have the standard decomposition
$$ T^*\partial\Omega_2=\mathcal{E}\cup\mathcal{H}\cup\mathcal{G},
$$
according to the value of $r_0:=r|_{y=0}$ where
$$\mathcal{E}=\{r_0<0\},
\mathcal{H}=\{r_0>0\},
\mathcal{G}=\{r_0=0\}.
$$
The sets $\mathcal{E},\mathcal{H},
\mathcal{G}$ are called elliptic, hyperbolic and glancing, with respectively. We define also the set
$$ \mathcal{H}_{\delta}:=\{ \delta<r_0<1-\delta \}
$$
with $0<\delta<\frac{1}{2}$ for the non-tangential and non incident points.
Note that here the elliptic points $\mathcal{E}$ is different from those defined in Section \ref{sectionelliptic}. 

To classify different situations as a ray approaching the boundary, we need more accurate decomposition of the glancing set $\mathcal{G}$. Let $r_1=\partial_yr|_{y=0}$ and define
$$ \mathcal{G}^{k+3}=\{(x',\xi'):r_0(x',\xi')=0,H_{r_0}^j(r_1)=0,\forall j\leq k;H_{r_0}^{k+1}(r_1)\neq 0\}, k\geq 0\}
$$
$$
\mathcal{G}^{2,\pm}:=\{(x',\xi'):r_0(x',\xi')=0,\pm r_1(x',\xi')>0\},\mathcal{G}^2:=\mathcal{G}^{2,+}\cup\mathcal{G}^{2,-}.
$$
Next we recall the definition of the generalized bicharacteristic:
\begin{definition}\label{generalbicha} 
A generalized bicharacteristic of $\Omega_2$ is a piecewise continuous map from $\R$ to $^bT^*\ov{\Omega}_2$ such that at any discontinuity point $s_0$, the left and right limits $\gamma(s_0\mp)$ exist and are the two points above the same hyperbolic point on the boundary (this property translates the specular reflection of geometric optics) and except at these isolated points the curve is $C^1$ and satisfies
\begin{itemize}
	\item $\frac{d\gamma}{ds}(s)=H_p(\gamma(s))$ if $\gamma(s)\in T^*\Omega_2$ or $\gamma(s)\in \mathcal{G}^{2,+}$ 
	\item $\frac{d\gamma}{ds}(s)=H_p(\gamma(s))-\frac{H_p^2y}{H_y^2p}H_y $ if $\gamma(s)\in \mathcal{G}\setminus\mathcal{G}^{2,+}$ where $y$ is the boundary defining function. 
\end{itemize}	
\end{definition}
\begin{rem}
The first property in the definition above is the fact that the curve is a geodesic in the interior or passing though a non diffractive point. The second one is that passing through a non diffractive gliding point it is curved to be forced to remain in the interior of $T^*\partial\Omega_2$ for a while. When the domain is smooth and does not have infinite order of contact with its tangents, then (see \cite{MS-1}) through each point passes a unique generalized bicharacteristic. In general only existence is known.
	
\end{rem}
\begin{rem}
In the statement of the geometric control condition \ref{GCC}, the generalized rays are the projection of the generalized bicharacteristics of $\Omega$ onto $\ov{\Omega}$.  
\end{rem}

\subsection{Elliptic regularity}
\begin{lem}\label{PoissonItegral}
Denote by $\lambda(y,x',\xi')=\sqrt{|\xi'|_{\ov{g}}^2-1}$. Let $\psi\in C^{\infty}(\R^{d-1}), \varphi_1,\varphi_2\in C_c^{\infty}(\R^{d})$, such that on the support of $\psi(\xi')\varphi_1(y,x')$ and $\psi(\xi')\varphi_2(y,x')$, $|\xi'|_g>1+\delta$ for some $\delta>0$. Then we have
$$ \mathrm{Op}_h\big(\mathbf{1}_{y\geq 0}\varphi_2\mathrm{e}^{\frac{-y\lambda}{h}}\psi(\xi')\big)\varphi_1=\mathcal{O}(1): H^{-\frac{1}{2}}(\R_{x'}^{d-1})\rightarrow L^2(\R_+^d)
$$ 
\end{lem}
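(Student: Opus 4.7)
I would treat the operator $T := \mathrm{Op}_h(\mathbf{1}_{y\geq 0}\varphi_2 e^{-y\lambda/h}\psi(\xi'))\varphi_1$ as a semiclassical Poisson extension and exploit two features of the elliptic hypothesis $|\xi'|_g \geq 1+\delta$ on the support of $\psi\varphi_j$. First, this gives $\lambda(y,x',\xi') \gtrsim 1+|\xi'|$ uniformly: indeed $\lambda \geq \sqrt{2\delta+\delta^2}$ for bounded $\xi'$, while $\lambda \sim |\xi'|_g \sim |\xi'|$ as $|\xi'| \to \infty$. Second, $\int_0^\infty e^{-2y\lambda/h}dy = h/(2\lambda)$ converts the exponential decay in $y$ into a Fourier-side factor $h/\lambda \lesssim h/(1+|\xi'|) \leq h/(h+|\xi'|)$ (using $h \leq 1$), which is (up to constants) exactly the semiclassical weight $(1+|\xi'/h|^2)^{-1/2}$ appearing in $\|u\|_{H^{-1/2}}^2$.

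\textbf{Model case.} I would first handle the frozen-coefficient situation where $\lambda$, $\varphi_1$, $\varphi_2$ do not depend on $(y,x')$, so $T$ acts as a Fourier multiplier in $x'$ for each fixed $y$. Using the semiclassical Fourier transform $\tilde u(\xi') = \int e^{-ix'\xi'/h}u(x')dx'$, Plancherel and Fubini yield
\begin{equation*}
\|Tu\|_{L^2(\R^d_+)}^2 \;=\; \frac{1}{(2\pi h)^{d-1}} \int_{\R^{d-1}} \frac{h\,|\psi(\xi')|^2}{2\lambda(\xi')}\,|\tilde u(\xi')|^2\,d\xi',
\end{equation*}
which is bounded by
\begin{equation*}
C\,(2\pi h)^{-(d-1)} \int_{\R^{d-1}} (1+|\xi'|^2/h^2)^{-1/2}\,|\tilde u(\xi')|^2\,d\xi' \;\sim\; \|u\|_{H^{-1/2}(\R^{d-1})}^2
\end{equation*}
by the weight comparison noted above. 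This settles the model.

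\textbf{General case and main obstacle.} For arbitrary $\lambda(y,x',\xi')$ and $(y,x')$-dependent cutoffs I would view $T$ as a family $\{A_y\}_{y\geq 0}$ of tangential $h$-pseudodifferential operators in $x'$. The main difficulty is that each derivative $\partial_{(x',\xi')}^\alpha e^{-y\lambda/h}$ produces polynomial factors in $y(1+|\xi'|)/h$, involving the two interacting scales $h$ and $y/h$. These are absorbed via the splitting $e^{-y\lambda/h} = e^{-y\lambda/(2h)}\cdot e^{-y\lambda/(2h)}$ together with $\sup_{t\geq 0}t^k e^{-t} \leq C_k$, so that the symbol of $A_y$ is realised as a uniformly bounded $S^0$-symbol times $e^{-cy(1+|\xi'|)/h}$. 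A dyadic decomposition $u = \sum_N P_N u$ at semiclassical frequencies $|\xi'|\sim N$, Calder\'on--Vaillancourt on each piece giving $\|A_y P_N u\|_{L^2_{x'}} \leq C e^{-cyN/h}\|P_N u\|_{L^2}$, followed by $L^2$-integration in $y$ producing the gain $h/N$ per piece, and almost-orthogonal summation in $N$ recombine to the announced bound. The chief obstacle is thus the uniform symbol control across the two scales $h$ and $y/h$, aggravated by the fact that $\psi$ need not be compactly supported, so the high-frequency growth $\lambda \gtrsim |\xi'|$ (and not merely $\lambda \geq \lambda_0$) is essential.
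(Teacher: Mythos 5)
Your proposal is essentially correct and rests on the same basic strategy as the paper's proof: freeze $y$, treat $T$ as a $y$-parametrized family of tangential pseudodifferential operators in $x'$, and integrate in $y$ using the exponential decay of $e^{-y\lambda/h}$. Both approaches also exploit the same central mechanism, which your model computation makes explicit: $\int_0^\infty e^{-2y\lambda/h}\,dy = h/(2\lambda)$, and since $\lambda\gtrsim 1+|\xi'|$ on the support, this reproduces exactly the $H^{-1/2}$ Fourier weight after rescaling $\xi'\to\xi'/h$.

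Where you diverge from the paper — and where your proposal is, if anything, \emph{more} careful — is in the handling of the general case. The paper conjugates by $\langle D_{x'}\rangle^{-1/2}$, obtains the classical symbol $a_y(x',\xi')=e^{-y\lambda(y,x',h\xi')/h}\psi(h\xi')\langle\xi'\rangle^{1/2}\varphi_2(y,x')$, applies Calder\'on--Vaillancourt at each fixed $y$ to obtain a bound $C_M\,h^{-1/2}e^{-cy/h}\bigl(1+\tfrac{y^M}{h^M}\bigr)$, and then integrates the square in $y$. However, if $\psi$ is \emph{not} compactly supported (the lemma only asks $\psi\in C^\infty$, and the application to the elliptic set $\{|\xi'|_g>1\}$ naturally produces a non-compactly-supported cutoff), then $\sup_{x',\xi'}|a_y|\gtrsim y^{-1/2}$ as $y\to 0^{+}$: the maximum of $\langle\xi'\rangle^{1/2}e^{-cy|\xi'|}$ is attained near $|\xi'|\sim y^{-1}$, which lies inside $\mathrm{supp}\,\psi(h\cdot)$ once $y\lesssim h$. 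The square $y^{-1}$ is not integrable at $y=0$, so the literal ``Calder\'on--Vaillancourt at each $y$, then integrate'' route cannot by itself close the estimate. (If one additionally knows $\psi$ is compactly supported — which is harmless in the application, since the semiclassical measure is anyway supported at bounded $\xi'$ — then $\langle\xi'\rangle^{1/2}\lesssim h^{-1/2}$ uniformly on $\mathrm{supp}\,\psi(h\xi')$ and the paper's bound holds as stated.)

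Your dyadic decomposition at semiclassical frequency $|\xi'|\sim N$ resolves precisely this point: on each dyadic shell, $\lambda\sim N$, so $\|A_yP_Nu\|_{L^2_{x'}}\lesssim e^{-cyN/h}\|P_Nu\|_{L^2}$ with no loss, and the $y$-integration produces the factor $h/N$, which is the $H^{-1/2}$ weight of that shell; almost-orthogonality of the outputs (each being microlocalized near $|\xi'|\sim N$ by pseudo-locality) lets you sum. This correctly exploits the full growth $\lambda\gtrsim|\xi'|$ rather than merely the lower bound $\lambda\geq\lambda_0$, as you emphasize. In short: same slicing philosophy, but your extra dyadic step is not cosmetic — it is what makes the argument close without the implicit compact-support hypothesis on $\psi$.
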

\begin{proof}
Denote by $T_y:=\mathrm{Op}_h\big(\mathbf{1}_{y\geq 0}\varphi_2\mathrm{e}^{\frac{-y\lambda}{h}}\psi(\xi')\big)$. By definition, we have for $f_0\in H_{x'}^{-1/2}$  and $y>0$ that
\begin{equation*}
\begin{split}
(T_yf_0)(x'):=\frac{1}{(2\pi h)^{d-1}}\iint \mathrm{e}^{-\frac{y\lambda(y,x',\xi')}{h}}\psi(\xi')\varphi_2(y,x')\mathrm{e}^{\frac{i(x'-z')\cdot\xi'}{h}}f_0(z')dz'd\xi'.
\end{split}
\end{equation*}
 Denote by $F_0:=\langle D_x'\rangle^{-1/2}f_0$, then this term can be written as
 $$ \mathrm{Op}\big(e^{-\frac{y\lambda(y,x',h\xi')}{h}}\psi(h\xi')\langle\xi'\rangle^{\frac{1}{2}}\varphi_2(y,x') \big)F_0.
 $$
For fixed $y>0$, from the Calder\'on-Vaillancourt theorem and the support property of $\psi$, we have, for any $M>0$ that
$$\big\|\mathrm{Op}\big(e^{-\frac{y\lambda(y,x',h\xi')}{h}}\psi(h\xi')\langle\xi'\rangle^{\frac{1}{2}}\varphi_2(y,x') \big)F_0\big\|_{L_{x'}^2}\leq C_Mh^{-\frac{1}{2}}e^{-\frac{cy}{h}}\big(1+\frac{y^M}{h^M}\big)\|F_0\|_{L_{x'}^2}.
$$
and the constants $C_M, c$ are independent of $y$. Squaring the inequality above and integrating in $y$ yields the bound $O(1)\|F_0\|_{L_{x'}^2}^2=O(1)\|f_0\|_{H_{x'}^{-\frac{1}{2}}}^2.$ 
This completes the proof of Lemma \ref{PoissonItegral}.
\end{proof}

\begin{prop}\label{elliptic}
	$$ \mu\mathbf{1}_{\mathcal{E}}=0.
	$$ 
\end{prop}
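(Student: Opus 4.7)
My plan is to reduce the claim to showing that $w_2$ is microlocally $o_{L^2}(1)$ at every elliptic boundary point. Indeed, since $\mu(|q|^2)=\lim_{h\to0}\|\mathrm{Op}_h(q)w_2\|_{L^2}^2$ for $q\in S^0$, it suffices to prove that for every $\rho_0\in\mathcal{E}$ one can find a neighborhood $V$ of $\rho_0$ such that $\|\mathrm{Op}_h(q)w_2\|_{L^2}=o(1)$ whenever $\mathrm{supp}(q)\subset V$; a partition of unity then gives $\mu\mathbf{1}_{\mathcal E}=0$. At interior points where $|\xi'|_{\bar g}>1$, standard semiclassical elliptic regularity applied to $(h^2\Delta+1)w_2=o_{L^2}(h)$ already forces the interior defect measure to sit on the characteristic set $\{\eta^2+|\xi'|_{\bar g}^2=1\}$, which has empty intersection with the elliptic set in the open half space $y>0$. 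So the entire issue lies at boundary elliptic points.

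Near such a point $\rho_0=(0,x_0',\xi_0')$ I would use the geodesic normal coordinates of Section~\ref{geometry} (with $\Omega_2=\{y>0\}$) and rerun the elliptic parametrix of Section~\ref{sectionelliptic} verbatim, this time with scale parameter equal to $h$ (rather than $\hbar=h^{1/2}$) and with operator $P_{h,0}=-(h^2\Delta+1)$. Choose cutoffs $\varphi(y,x')\psi(\xi')$ localized near $\rho_0$ so that $1-|\xi'|_{\bar g}^2\le -c_0<0$ on their support; then the roots of the characteristic polynomial are $\eta_{\pm}=\pm i\lambda$ with $\lambda=\sqrt{|\xi'|_{\bar g}^2-1}>0$. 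The analogue of~\eqref{pseudo-relation} becomes, schematically,
\begin{equation*}
\varphi(y,x')\psi(hD_{x'})\underline{w_2}
=i\,\mathrm{Op}_h\!\bigl(e^{-y\lambda/h}n_N\bigr)(h\partial_y w_2|_{y=0})
-\mathrm{Op}_h\!\bigl(e^{-y\lambda/h}d_N\bigr)(w_2|_{y=0})
+E_N\underline{\kappa}-R_N\underline{w_2},
\end{equation*}
where $\kappa=-(if_2+hg_2)=o_{L^2}(h)$, $n_N$ has classical order $-1$, and $d_N=\eta_+n_N$ has classical order~$1$.

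The four terms are then estimated in $L^2(\Omega_2)$. The remainder $R_N\underline{w_2}$ is $O(h^{N+1})$ for $N$ large. The source contribution $E_N\underline{\kappa}=o_{L^2}(h)$ since $E_N=O(1):L^2\to L^2$. For the two boundary contributions I would appeal to Lemma~\ref{PoissonItegral}: its proof shows that $\mathrm{Op}_h(e^{-y\lambda/h}\psi(\xi'))$ maps $L^2(\Sigma)\to L^2(\Omega_2)$ with norm $O(h^{1/2})$ (the exponential decay in $y$ gives an extra factor $(\int_0^\infty e^{-2y\lambda/h}dy)^{1/2}\sim h^{1/2}$). For the Dirichlet contribution, $\|w_2|_\Sigma\|_{H^{1/2}(\Sigma)}=o(h^{1/2})$ gives $\|\mathrm{Op}_h(d_N)w_2|_\Sigma\|_{L^2(\Sigma)}=o(h^{1/2})$ at semiclassical frequencies, which then yields $o_{L^2}(h)$ in the bulk. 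For the Neumann contribution, the crucial sharper input is $\|\varphi\psi(hD_{x'})\varphi h\partial_\nu w_2|_\Sigma\|_{L^2(\Sigma)}=o(1)$ provided by Proposition~\ref{estimatetrace1}, which after composition with the order $-1$ operator $\mathrm{Op}_h(n_N)$ and the Poisson operator of gain $h^{1/2}$ gives $o_{L^2}(h^{1/2})$. Summing, $\|\varphi\psi(hD_{x'})w_2\|_{L^2(\Omega_2)}=o(1)$, which is precisely what is needed.

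The main obstacle is bookkeeping the interplay between classical and semiclassical orders of the boundary operators $n_N,d_N$ and translating the given boundary bounds (which are classical $H^{\pm1/2}$ bounds of size $o(h^{1/2})$) into the correct semiclassical $L^2$ estimates after applying the Poisson operator. In particular, only a merely bounded $H^{-1/2}$ control on $h\partial_\nu w_2|_\Sigma$ would be insufficient to produce smallness; it is essential that Proposition~\ref{estimatetrace1} has already upgraded this to an $L^2$ microlocal estimate with $o(1)$ size, and this is exactly the feature that makes the present elliptic argument close.
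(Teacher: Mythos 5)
Your proof follows the same route as the paper: apply the elliptic parametrix of Section~\ref{sectionelliptic} at scale $\hbar=h$ near a boundary point of $\mathcal{E}$, and use Lemma~\ref{PoissonItegral} to convert the boundary-trace inputs into bulk $o_{L^2}(1)$ smallness of $\varphi\psi(hD_{x'})\underline{w_2}$, hence $\mu\mathbf{1}_{\mathcal{E}}=0$.

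Two small inaccuracies, neither fatal. First, $d_N=\eta_+ n_N$ has classical order $0$ (the paper computes $\sigma(d_N(0))=\tfrac12\varphi\psi$), not order $1$ as you wrote. Second, your closing claim that the $L^2$-microlocal bound $\|\varphi\psi(hD_{x'})\varphi\,h\partial_\nu w_2|_\Sigma\|_{L^2(\Sigma)}=o(1)$ from Proposition~\ref{estimatetrace1} is \emph{essential} for the elliptic estimate to close is not what the paper uses here. The paper feeds the bound $\|h\partial_\nu w_2|_\Sigma\|_{H^{-1/2}(\Sigma)}=o(h^{1/2})$ (which is available from Lemma~\ref{apriori} and the DtN analysis of Section~3 and is strictly more than ``merely bounded'') directly into the $\mathcal{O}(1):H^{-1/2}(\Sigma)\to L^2(\Omega_2)$ form of Lemma~\ref{PoissonItegral}, obtaining $o_{L^2}(h^{1/2})$ without invoking the $L^2$ trace estimate. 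Your version---trade the $o(h^{1/2})$ gain in $H^{-1/2}$ for an $o(1)$ bound in $L^2$ and let the Poisson operator supply the extra $h^{1/2}$---reaches the same $o_{L^2}(h^{1/2})$, so the two bookkeepings are equivalent at semiclassical frequencies; but the $L^2$ microlocal bound is actually reserved for the hyperbolic analysis (Proposition~\ref{zerohyperbolic}), not needed in Proposition~\ref{elliptic}.
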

\begin{proof}
	Applying \eqref{pseudo-relation} to $\kappa=o_{H^1}(h)+o_{L^2}(h^2)$ and $\hbar=h$, we obtain that
	\begin{equation}\label{eqelliptic}
	\begin{split}
	\varphi(y,x')\psi(hD_{x'})\ud{w_2}=&-\mathrm{Op}_h\big(\mathrm{e}^{\frac{iy\eta_+}{h}}in_N(y,\cdot) \big)(h\partial_yw_2|_{y=0})-\mathrm{Op}_h\big(\mathrm{e}^{\frac{iy\eta_+}{h}}d_N(y,\cdot)\big)(w_2|_{y=0})\\
	+&\mathrm{Op}_h\big(\mathrm{e}^{\frac{iy\eta_+}{h}}in_N(y,\cdot) \big)(hH(0,x')w_2|_{y=0})+o_{L_{y,x'}^2}(h^2).
	\end{split}
	\end{equation}  
	Applying Lemma \ref{PoissonItegral}, we have 
	$ \mathrm{Op}_h\big(\mathrm{e}^{\frac{iy\eta_+}{h}}in_N(y,\cdot) \big)(h\partial_yw_2|_{y=0})
	=o_{L_{y,x'}^2}(h^{\frac{1}{2}}).
	$	
	By the same way, the other terms on the right side of \eqref{eqelliptic} are at most $o_{L_{y,x'}^2}(h^{\frac{1}{2}})$. Hence $\varphi(y,x')\psi(hD_{x'})\ud{w_2}=o_{L_{y,x'}^2}(h^{\frac{1}{2}})$, and this completes the proof of Proposition \ref{elliptic}. 
\end{proof}

\subsection{Propagation formula near the interface}
Consider the operator
$$ B_h=B_{0,h}+B_{1,h}h\partial_y
$$
where $B_{j,h}=\widetilde{\chi}_1\op_h(b_j)\widetilde{\chi}_1$, $j=0,1$ are two tangential operators and  $\widetilde{\chi}_1$ has compact support near a point $z_0\in \Sigma$. The symbols $b_j$ are compactly supported in $(x',\xi')$ variables. Note that in the local coordinate system,
$$ P_{h,0}=-h^2\Delta-1=-\frac{1}{\sqrt{|\ov{g}|}}h\partial_y\sqrt{|\ov{g}|}h\partial_y-R_h,
$$
where $R_h$ is a self-adjoint tangential differential operator of order $2$ classic and of order $0$ semiclassic.

\begin{lem}[Boundary propagation]\label{propagation:boundary} 
Let $(\widetilde{w}_h)$ be a $h$-dependent family of functions
satisfying $\widetilde{w}_h=O_{L^2(\Omega_2)}=O(1)$ and $\widetilde{w}_h=O_{H^{1}(\Omega_2)}(h^{-1})$. 
Assume moreover that $\widetilde{w}_h$ satisfies the equation
$$ P_{h,0}\widetilde{w}_h=o_{H^1(\Omega_2)}(h)+o_{L^2(\Omega_2)}(h^2)
$$
and the boundary condition: $\widetilde{w}_h|_{\Sigma}=o_{H^{\frac{1}{2}}}(h^{\frac{1}{2}})$ and $h\partial_{\nu}\widetilde{w}_h=O_{H^{-\frac{1}{2}}}(h^{\frac{1}{2}})$. Then we have
	\begin{equation}\label{eq:propagationinterface} \frac{1}{ih}\big([P_{h,0},B_h]\widetilde{w}_h,\widetilde{w}_h\big)_{L^2(\Omega_2)}=i\big(B_{1,h}|_{y=0}(h\partial_y\widetilde{w}_h)|_{y=0},(h\partial_y\widetilde{w}_h)|_{y=0} \big)_{L^2(\Sigma)}+o(1).
	\end{equation}
\end{lem}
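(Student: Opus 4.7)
The plan is to apply Green's second identity once to reduce the commutator $\frac{1}{ih}([P_{h,0}, B_h]\widetilde{w}_h, \widetilde{w}_h)$ to interior terms (easily $o(1)$) plus two explicit boundary pairings on $\Sigma$. Since $P_{h,0} = -h^2\Delta - 1$ is formally self-adjoint,
\begin{equation*}
(P_{h,0} u, v)_{L^2(\Omega_2)} - (u, P_{h,0} v)_{L^2(\Omega_2)} = -h(h\partial_\nu u, v)_{L^2(\Sigma)} + h(u, h\partial_\nu v)_{L^2(\Sigma)},
\end{equation*}
with $\nu$ the outward unit normal to $\Omega_2$ (so that $\partial_\nu = -\partial_y$ in the local chart of Section \ref{geometry}). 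Applying this with $u = B_h \widetilde{w}_h$ and $v = \widetilde{w}_h$, and dividing by $ih$, gives
\begin{equation*}
\frac{1}{ih}([P_{h,0}, B_h]\widetilde{w}_h, \widetilde{w}_h) = I_{\mathrm{int}} - \frac{1}{i}(h\partial_\nu(B_h\widetilde{w}_h), \widetilde{w}_h)_\Sigma + \frac{1}{i}(B_h\widetilde{w}_h, h\partial_\nu\widetilde{w}_h)_\Sigma,
\end{equation*}
where $I_{\mathrm{int}} := \frac{1}{ih}\left[(B_h\widetilde{w}_h, P_{h,0}\widetilde{w}_h) - (B_h P_{h,0}\widetilde{w}_h, \widetilde{w}_h)\right]$.

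The interior contribution $I_{\mathrm{int}}$ is $o(1)$. The first inner product inside is at most $\|B_h\widetilde{w}_h\|_{L^2}\|P_{h,0}\widetilde{w}_h\|_{L^2} = O(1)\cdot o(h) = o(h)$, and so contributes $o(1)$ after division by $h$. For the second, the $B_{0,h}$ contribution is analogous; the delicate $(B_{1,h}\, h\partial_y P_{h,0}\widetilde{w}_h, \widetilde{w}_h)$ I would control via $\|h\partial_y P_{h,0}\widetilde{w}_h\|_{H^{-1}} = o(h^2)$ (since $\partial_y$ loses one derivative while the $h$-factor gives the scaling), coupled with the $H^{-1}$--$H^1$ duality against $\widetilde{w}_h$ of $H^1$-norm $O(h^{-1})$, which yields $o(h)$.

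Next, expanding $B_h\widetilde{w}_h|_\Sigma = B_{0,h}|_{y=0}\widetilde{w}_h|_\Sigma + B_{1,h}|_{y=0}(h\partial_y\widetilde{w}_h)|_\Sigma$ and using $h\partial_\nu = -h\partial_y$ on $\Sigma$, the second boundary pairing decomposes into an $H^{1/2}$--$H^{-1/2}$ duality between $B_{0,h}|_{y=0}\widetilde{w}_h|_\Sigma = o_{H^{1/2}}(h^{1/2})$ and $h\partial_\nu\widetilde{w}_h|_\Sigma = O_{H^{-1/2}}(h^{1/2})$ (contributing $o(1)$), plus
\begin{equation*}
\frac{1}{i}\left(B_{1,h}|_{y=0}(h\partial_y\widetilde{w}_h)|_\Sigma,\, -(h\partial_y\widetilde{w}_h)|_\Sigma\right)_\Sigma = i\left(B_{1,h}|_{y=0}(h\partial_y\widetilde{w}_h)|_\Sigma,\, (h\partial_y\widetilde{w}_h)|_\Sigma\right)_\Sigma,
\end{equation*}
which is exactly the target main term in \eqref{eq:propagationinterface}.

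The main obstacle is showing that the remaining boundary pairing $-\frac{1}{i}(h\partial_\nu(B_h\widetilde{w}_h), \widetilde{w}_h)_\Sigma$ is $o(1)$. Expanding via the product rule and the tangentiality of $B_{j,h}$, most pieces pair an $O_{H^{-1/2}}(h^{1/2})$ (or smaller) factor against $\widetilde{w}_h|_\Sigma = o_{H^{1/2}}(h^{1/2})$ and are $o(1)$ by duality. The one genuinely problematic piece is $-B_{1,h}|_{y=0}(h\partial_y)^2\widetilde{w}_h|_{y=0}$, since $(h\partial_y)^2\widetilde{w}_h$ has no intrinsic trace on $\Sigma$. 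I would bypass this by substituting the local form of the equation: since
\begin{equation*}
P_{h,0} = -(h\partial_y)^2 - \frac{h(\partial_y\sqrt{|\overline{g}|})}{\sqrt{|\overline{g}|}}\, h\partial_y - R_h - 1,
\end{equation*}
one has $(h\partial_y)^2\widetilde{w}_h = -P_{h,0}\widetilde{w}_h - \widetilde{w}_h - R_h\widetilde{w}_h - O(h)\, h\partial_y\widetilde{w}_h$ with $R_h$ tangential (classical order $2$, semi-classical order $0$). The tangential contributions $-\widetilde{w}_h$, $-R_h\widetilde{w}_h$, and the $O(h)\, h\partial_y\widetilde{w}_h$ term all yield $o(1)$ after symmetrizing via tangential integration by parts on $\Sigma$ and exploiting the small $H^{1/2}$-norm of $\widetilde{w}_h|_\Sigma$. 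The remaining $-P_{h,0}\widetilde{w}_h$ piece, which lacks an intrinsic trace, is handled by reinterpreting the corresponding boundary pairing as an interior integral over $\Omega_2$ through an extra application of Green's identity, where $\|P_{h,0}\widetilde{w}_h\|_{L^2} = o(h)$ directly produces $o(1)$.
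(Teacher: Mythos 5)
Your proposal is essentially correct and follows the same strategy as the paper's proof: a single integration by parts in $P_{h,0}$ that isolates the two boundary terms, the identification of $i\big(B_{1,h}|_{y=0}(h\partial_y\widetilde{w}_h)|_{\Sigma},(h\partial_y\widetilde{w}_h)|_{\Sigma}\big)$ as the main term, the substitution of the equation for $(h\partial_y)^2\widetilde{w}_h$ in the remaining boundary pairing, and the estimate of each piece via the $H^{\pm\frac12}(\Sigma)$ trace hypotheses and $H^{-1}$--$H^1$ duality in the interior. The only variation is your treatment of the $B_{1,h}P_{h,0}\widetilde{w}_h$ trace term (converting the boundary pairing back to an interior integral by integrating $\partial_y$), which is a legitimate alternative to the paper's direct argument using $B_{1,h}=\mathcal{O}_{L^2\to H^1}(h^{-1})$ and the trace theorem; you should, however, note that this conversion produces the term $\big(B_{1,h}P_{h,0}\widetilde{w}_h,\partial_y\widetilde{w}_h\big)_{L^2(\Omega_2)}$, which is only $o(1)$ (not smaller) against $\|\partial_y\widetilde{w}_h\|_{L^2}=O(h^{-1})$, and also needs $\partial_y P_{h,0}\widetilde{w}_h = o_{L^2}(h)+o_{H^{-1}}(h^2)$, rather than just $\|P_{h,0}\widetilde{w}_h\|_{L^2}=o(h)$.
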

\begin{proof}
First we remark that the right hand side of \eqref{eq:propagationinterface} makes sense, since $B_{1,h}|_{y=0}$ is a classical smoothing operator (but of semi-classical order $0$). 
We denote by $\widetilde{w}=\widetilde{w}_h$ for simplicity. Without loss of generality, we may assume that $B_{0,h}=0$, since the treatment for the term $\frac{1}{ih}\big([P_{h,0},B_{0,h}]\widetilde{w},\widetilde{w}\big)_{L^2}$ is the same as in the proof of Proposition \ref{propagation:interior}, which contributes only $o(1)$ terms. By expanding the commutator, we have
\begin{equation*}
\begin{split}
&\frac{1}{ih}\big([P_{0,h},B_h]\widetilde{w},\widetilde{w}\big)_{L^2}\\=&\frac{1}{ih}\big(P_{0,h}B_{1,h}h\partial_y\widetilde{w},\widetilde{w}\big)-\frac{1}{ih}\big(B_{1,h}h\partial_yP_{0,h}\widetilde{w},\widetilde{w} \big)_{L^2}\\
=&\frac{1}{ih}\big(B_{1,h}h\partial_y\widetilde{w},P_{0,h}\widetilde{w}\big)_{L^2}-\frac{1}{ih}\big(B_{1,h}h\partial_yP_{0,h}\widetilde{w},\widetilde{w}\big)_{L^2}\\
+&i\big(B_{1,h}|_{y=0}(h\partial_y\widetilde{w})|_{y=0},(h\partial_y\widetilde{w})|_{y=0}\big)_{L^2(\Sigma)}-i\big((h\partial_yB_{1,h}h\partial_y\widetilde{w})|_{y=0},\widetilde{w}|_{y=0}\big)_{L^2(\Sigma)}
\end{split}
\end{equation*}
Observe that $B_{1,h}h\partial_y\widetilde{w}=O_{L^2(\Omega_2)}(1)$, $P_{0,h}\widetilde{w}=o_{H^1(\Omega_2)}(h)+o_{L^2(\Omega_2)}(h^2)$, and $B_{1,h}h\partial_yP_{0,h}\widetilde{w}=o_{L^2(\Omega_2)}(h)+o_{H^{-1}(\Omega_2}(h^2)$, thus
$$ \frac{1}{ih}\big(B_{1,h}h\partial_yw_2,P_{0,h}\widetilde{w}\big)_{L^2}-\frac{1}{ih}\big(B_{1,h}h\partial_yP_{0,h}\widetilde{w},\widetilde{w}\big)_{L^2}=o(1)
$$
as $h\rightarrow 0$. Write
$h\partial_yB_{1,h}h\partial_y\widetilde{w}=h(\partial_yB_{1,h})h\partial_y\widetilde{w}+B_{1,h}h^2\partial_y^2\widetilde{w}$ and using the equation satisfied by $\widetilde{w},$ we obtain that
$$ h\partial_yB_{1,h}h\partial_y\widetilde{w}=A_hh\partial_y\widetilde{w}-B_{1,h}R_h\widetilde{w}-B_{1,h}P_{h,0}\widetilde{w},
$$ 
where $A_h$ is a tangential operator of order $0$ semi-classic. Thanks to Lemma \ref{technical1}, $B_{1,h}=\mathcal{O}_{L^2\rightarrow H^1}(h^{-1})$, thus
$ B_{1,h}P_{h,0}\widetilde{w}=o_{H^1(\Omega_2)}(h)
$ and by the trace theorem $(B_{1,h}P_{h,0}\widetilde{w})|_{\Sigma}=o_{H^{\frac{1}{2}}(\Sigma)}(h)$. Next since $R_h\widetilde{w}|_{\Sigma}=o_{H^{\frac{1}{2}}(\Sigma) }(h^{\frac{1}{2}})+o_{H^{-\frac{3}{2}}(\Sigma) }(h^{\frac{5}{2}})$, we have $B_{1,h}R_h\widetilde{w}|_{\Sigma}=o_{H^{\frac{1}{2}}(\Sigma)}(h^{\frac{1}{2}})$.
  We then deduce that $(h\partial_yB_{1,h}h\partial_y\widetilde{w})|_{y=0}=O_{H^{-\frac{1}{2}}(\Sigma)}(h^{\frac{1}{2}})$, which implies that $$\big((h\partial_yB_{1,h}h\partial_y\widetilde{w})|_{y=0},\widetilde{w}|_{y=0}\big)_{L^2(\Sigma)}=o(1).$$ This completes the proof of Lemma \ref{propagation:boundary}.
\end{proof}
To derive the propagation formula for the semiclassical measure, we consider a family of functions $(\widetilde{w}_h)$ satisfying the equation
$$P_{h,0}\widetilde{w}_h=o_{H^1(\Omega_2)}(h)+o_{L^2(\Omega_2)}(h^2)
$$
with a weaker boundary conditions, compared with \eqref{eq:w2}.
$$ \|\widetilde{w}_h\|_{L^2(\Omega_2)}=O(1),\;\|h\nabla \widetilde{w}_h\|_{L^2(\Omega)}=O(1),\; \|\widetilde{w}_h|_{\Sigma}\|_{H^{\frac{1}{2}}(\Sigma)}=o(h^{\frac{1}{2}}),\; \|(h\partial_{\nu}\widetilde{w}_h)|_{\Sigma}\|_{H^{-\frac{1}{2}}(\Sigma)}=O(h^{\frac{1}{2}}).
$$
Denote by $\mu$ is the semiclassical measure associated with $(\widetilde{w}_h)$.
\begin{prop}\label{muH=0}
	\begin{align}
	&(1) \quad \mu\mathbf{1}_{\mathcal{H}}=0;\\
	&(2)\quad  \limsup_{h\rightarrow 0}\big|\big(\op_h(b_0)h\partial_y \widetilde{w}_h,\widetilde{w}_h \big)_{L^2}\big|\leq \sup_{\rho\in\text{supp}(b_0)}|r(\rho)|^{\frac{1}{2}}|b_0(\rho)|, 
	\end{align}
	for any tangential symbol $b_0(y,x',\xi')$ of order 0.
\end{prop}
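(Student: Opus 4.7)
For part (1), the plan is to combine the boundary propagation Lemma~\ref{propagation:boundary} with the microlocal factorization of $P_{h,0}$ available in the hyperbolic region. Apply Lemma~\ref{propagation:boundary} with $B_h = B_{1,h}\, h\partial_y$, $B_{1,h} = \op_h(b_1)$ self-adjoint, and $b_1(x',\xi') \geq 0$ a tangential symbol supported in $\mathcal{H}_\delta$. The left-hand side $\frac{1}{ih}([P_{h,0},B_h]\widetilde w_h, \widetilde w_h)$ is real because $\frac{1}{ih}[P_{h,0},B_h]$ is self-adjoint, while the boundary term $i(B_{1,h}|_{y=0}\, h\partial_y\widetilde w_h|_{y=0},\, h\partial_y\widetilde w_h|_{y=0})_{L^2(\Sigma)}$ is purely imaginary because $B_{1,h}|_{y=0}$ is self-adjoint. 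Matching imaginary parts and invoking G\aa rding on the non-negative symbol $b_1|_{y=0}$ forces
\[
(\op_h(b_1|_{y=0})\, h\partial_y\widetilde w_h|_{y=0},\, h\partial_y\widetilde w_h|_{y=0})_{L^2(\Sigma)} \longrightarrow 0 .
\]
Varying $b_1$, the semiclassical defect measure of the Neumann trace vanishes on $\mathcal{H}_\delta$ for every $\delta > 0$.

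In $\mathcal{H}_\delta$ the tangential operator $R_h$ has elliptic positive principal symbol $r$, so $\Lambda_h := \op_h(\sqrt{r})$ is a bounded self-adjoint tangential operator satisfying the microlocal factorization $P_{h,0} = (hD_y + \Lambda_h)(hD_y - \Lambda_h) + O(h)$. Setting $\widetilde w_h^\pm := (hD_y \pm \Lambda_h)\widetilde w_h$, each $\widetilde w_h^\pm$ obeys a first-order transport equation $(hD_y \mp \Lambda_h)\widetilde w_h^\pm = O_{L^2}(h)$, and its trace
\[
\widetilde w_h^\pm|_{y=0} = -i h\partial_y\widetilde w_h|_{y=0} \pm \Lambda_h|_{y=0}\widetilde w_h|_{y=0}
\]
is $o_{L^2(\Sigma)}(1)$ microlocally in $\mathcal{H}_\delta$, combining the preceding Neumann-trace information with the hypothesis $\widetilde w_h|_\Sigma = o_{H^{1/2}}(h^{1/2})$. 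The semiclassical energy identity for this transport equation then gives $\|\widetilde w_h^\pm(y,\cdot)\|_{L^2(x')} = o(1) + O(y^{1/2})$ uniformly for small $y$, and inverting the elliptic $\Lambda_h$ transfers the bound to $\widetilde w_h$ itself. Hence for any cutoff $\chi_\epsilon \in C_c^\infty([0,\epsilon))$ and any tangential $\widetilde a$ supported in $\mathcal{H}_\delta$,
\[
\bigl|(\op_h(\chi_\epsilon(y)\,\widetilde a(x',\xi'))\,\widetilde w_h,\,\widetilde w_h)_{L^2(\Omega_2)}\bigr| \leq C\bigl(\epsilon\, o(1) + O(\epsilon^2)\bigr) .
\]
Letting $h \to 0$ and then $\epsilon \to 0$ yields $\mu(\{y=0\} \cap \mathcal{H}_\delta) = 0$; since $\delta > 0$ is arbitrary, $\mu\mathbf{1}_{\mathcal{H}} = 0$.

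For part (2), Cauchy--Schwarz reduces matters to bounding $\|\op_h(b_0)\, h\partial_y\widetilde w_h\|_{L^2}$ against $\|\widetilde w_h\|_{L^2}$. Symbol calculus rewrites the square of the first factor as $(\op_h(|b_0|^2)(h\partial_y)^2\widetilde w_h, \widetilde w_h) + O(h)$. One integration by parts in $y$ (whose boundary terms pair $\widetilde w_h|_\Sigma = o_{H^{1/2}}(h^{1/2})$ with $h\partial_\nu\widetilde w_h|_\Sigma = O_{H^{-1/2}}(h^{1/2})$ and are therefore $o(1)$) combined with substitution of the equation $h^2\partial_y^2 \widetilde w_h = -R_h\widetilde w_h + o(h)$ reduces it to $(\op_h(|b_0|^2 r)\widetilde w_h, \widetilde w_h) + o(1)$, whose limit is $\int |b_0|^2 r\, d\mu$. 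Bounding this by $\sup_{\rho \in \supp(b_0)} |b_0(\rho)|^2 |r(\rho)|$ (using that $\mu$ is carried in $\{r \geq 0\}$ with bounded total mass coming from $\|\widetilde w_h\|_{L^2} = O(1)$) and taking square roots yields the stated inequality.

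The main technical hurdle is part (1): the tangential square root $\Lambda_h = \op_h(\sqrt{r})$ is of classical order $1$, so the transport equation for $\widetilde w_h^\pm$ genuinely mixes the two semiclassical scales $h$ and $h^{1/2}$, and the $L^2$ energy estimate must be carried out uniformly in $h$ up to the boundary $y = 0$. The glancing region $\mathcal{G} = \{r_0 = 0\}$ is here deliberately excluded via the strict hyperbolicity parameter $\delta > 0$, and is treated separately through the generalized bicharacteristic machinery introduced in Section~\ref{geometry}.
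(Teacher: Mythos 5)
Your part (2) is essentially the paper's own argument (Cauchy--Schwarz, integration by parts, substituting $h^2\partial_y^2\widetilde w_h = -R_h\widetilde w_h + o(\cdot)$, then symbol calculus), and it is correct. The issue lies in part (1).

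The opening step of your part (1) contains a genuine error. You claim that $\frac{1}{ih}[P_{h,0},B_h]$ with $B_h = B_{1,h}h\partial_y$ is self-adjoint on $L^2(\Omega_2)$, so that the left-hand side of Lemma~\ref{propagation:boundary} is real, and then, matching imaginary parts against the purely imaginary boundary term, conclude that the Neumann-trace defect measure $\nu_0$ vanishes on $\mathcal{H}_\delta$. But $\frac{1}{ih}[P_{h,0},B_h]$ is \emph{not} self-adjoint on the half-space: it contains $\partial_y$ and $\partial_y^2$, the integration by parts against $\widetilde w_h$ produces boundary contributions, and those boundary contributions are \emph{exactly} the term $i\bigl(B_{1,h}|_{y=0}(h\partial_y\widetilde w_h)|_{y=0},(h\partial_y\widetilde w_h)|_{y=0}\bigr)_{L^2(\Sigma)}$ appearing on the right of~\eqref{eq:propagationinterface}. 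In other words, the imaginary part of the left-hand side is not a priori zero --- it \emph{equals} the boundary term modulo $o(1)$, and the identity carries no extra information from matching parities. Moreover the conclusion you would draw, $\nu_0\mathbf{1}_{\mathcal{H}_\delta}=0$, is strictly stronger than what Proposition~\ref{muH=0}(1) asserts and is false under the hypotheses: the proposition only assumes $h\partial_\nu\widetilde w_h|_\Sigma = O_{H^{-1/2}}(h^{1/2})$, not $o$, so the Neumann-trace measure is a nontrivial finite measure in general. (The vanishing of the Neumann measure in $\mathcal{H}$ is obtained only later, in Proposition~\ref{zerohyperbolic}, under the additional $o_{L^2}$ microlocal hypothesis on the Neumann data that holds for $w_2$ but is not part of the present proposition.)

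The happier news is that the factorization/transport mechanism you set up in the second half of part (1) is the right idea (it is exactly the transversality argument the paper cites from \cite{BL03}, \cite{BS}), and it does not actually need the Neumann trace to be small. With $(hD_y\mp\Lambda_h)\widetilde w_h^\pm = O_{L^2}(h)$ microlocally in $\mathcal{H}_\delta$ and only the $O_{L^2}(1)$ bound on $\widetilde w_h^\pm|_{y=0}$, the semiclassical Gronwall estimate gives $\|\widetilde w_h^\pm(y,\cdot)\|_{L^2_{x'}} = O(1)$ uniformly for small $y$, hence $\|\widetilde w_h\|_{L^2(\{0\le y\le\epsilon\},\,\mathcal{H}_\delta)}^2 = O(\epsilon)$ after inverting the elliptic $\Lambda_h$. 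Pairing against $\chi_\epsilon(y)\widetilde a(x',\xi')$, taking $h\to 0$ then $\epsilon\to 0$, already forces $\mu(\{y=0\}\cap\mathcal{H}_\delta)=0$. So the erroneous first paragraph should simply be removed: the transversality of $H_p$ to $\{y=0\}$ at hyperbolic points (encoded in $\eta\ne 0$, hence nonzero $y$-speed of the flow) is what makes $\mu$ invisible there, regardless of the size of $\nu_0$.
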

\begin{proof}
(1) follows from the transversality of the rays reaching $\mathcal{H}$, and the proof is the same as in \cite{BL03} (see also the proof of Proposition 2.14 in \cite{BS} by taking $M_h=0$ there). The proof of (2) is also similar as in \cite{BS}, with an additional attention when doing the integration by part. Indeed, by Cauchy-Schwartz,
$$ \big|\big(\mathrm{Op}_h(b_0)h\partial_y\widetilde{w}_h,\widetilde{w}_h\big)_{L^2}\big|\leq \big|\big(\mathrm{Op}_h(b_0)h\partial_y\widetilde{w}_h,\mathrm{Op}_h(b_0)h\partial_y\widetilde{w}_h \big)\big|^{\frac{1}{2}}\|\widetilde{w}_h\|_{L^2}.
$$
Doing the integration by part,
$$ \big(\mathrm{Op}_h(b_0)h\partial_y\widetilde{w}_h,\mathrm{Op}_h(b_0)h\partial_y\widetilde{w}_h \big)_{L^2}=O(h)-\big(\mathrm{Op}_h(b_0)h^2\partial_y^2\widetilde{w}_h,\mathrm{Op}_h(b_0)\widetilde{w}_h\big)_{L^2},
$$
where $O(h)$ comes from the commutators and the boundary term, since by the assumption on $\widetilde{w}_h$, $\big(\mathrm{Op}_h(b_0)(h\partial_y\widetilde{w}_h)|_{y=0},h\mathrm{Op}_h(b_0)(\widetilde{w}_h|_{y=0}) \big)_{L_{x'}^2}=o(h^2)$. For the rest argument, we just replace $-h^2\partial_y^2\widetilde{w}_h$ by $-R_h\widetilde{w}_h$ plus errors in $O_{L^2}(h)$. From the symbolic calculus, the contribution $\sup_{\rho}|r|^{\frac{1}{2}}|b_0(\rho)|$ comes from the principal term $\big|\big(\mathrm{Op}_h(b_0)R_h\widetilde{w}_h,\mathrm{Op}_h(b_0)\widetilde{w}_h \big)_{L^2}\big|^{\frac{1}{2}},$ after taking limsup in $h$. This completes the proof of Proposition \ref{muH=0}.
\end{proof}

\begin{lem}\label{H1measure}
	Let $B_{0,h}, B_{1,h}$ are tangential semiclassical operators of order $0$, with principal symbols $b_0, b_1$ with respectively, supported near a point $\rho_0$ of $T^*\Sigma$. Then
	\begin{equation}\label{measureidentity}
	\big(\big(B_{0,h}+B_{1,h}\frac{h}{i}\partial_y\big)\widetilde{w}_h,\widetilde{w}_h \big)= \langle\mu,b_0+b_1\eta\rangle+o(1),
	\end{equation}
	as $h\rightarrow 0$.
\end{lem}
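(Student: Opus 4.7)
The strategy is to split the quadratic form into the two contributions coming from $B_{0,h}$ and from $B_{1,h}\frac{h}{i}\partial_y$, and in each case reduce to the definition of $\mu$ by microlocalizing in the normal frequency $\eta$.

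For the tangential part $B_{0,h}$, I would write $B_{0,h} = B_{0,h}\psi(hD_y) + B_{0,h}(1-\psi(hD_y))$ for a cutoff $\psi\in C_c^\infty(\mathbb{R})$ with $\psi=1$ near $0$. The microlocalized piece $B_{0,h}\psi(hD_y)$ has compactly supported principal symbol $b_0(y,x',\xi')\psi(\eta)$ on $T^*\Omega_2$, so by the definition of the semiclassical measure
\begin{equation*}
\bigl(B_{0,h}\psi(hD_y)\widetilde{w}_h,\widetilde{w}_h\bigr)_{L^2} \longrightarrow \langle \mu,\, b_0\psi(\eta)\rangle .
\end{equation*}
The remainder is microlocalized to $|\eta|\gg 1$, where $P_{h,0}$ is elliptic on the $\xi'$-support of $b_0$; by elliptic regularity applied to the equation $P_{h,0}\widetilde{w}_h = o_{L^2}(h)$, together with the smallness of the traces $\widetilde{w}_h|_\Sigma = o_{H^{1/2}}(h^{1/2})$ and $h\partial_\nu\widetilde{w}_h|_\Sigma = O_{H^{-1/2}}(h^{1/2})$, this remainder contributes only $o(1)$. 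Letting $\psi\uparrow 1$ and noting that $\mu$ has compact $\eta$-support (since $\|h\nabla\widetilde{w}_h\|_{L^2}=O(1)$ bounds $\int\eta^2\,d\mu$) recovers $(B_{0,h}\widetilde{w}_h,\widetilde{w}_h)\to\langle\mu,b_0\rangle$.

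The normal-derivative part is handled analogously. Introduce $\chi\in C_c^\infty(\mathbb{R})$ equal to $1$ on $[-M_0,M_0]$ and split
\begin{equation*}
B_{1,h}\tfrac{h}{i}\partial_y \;=\; B_{1,h}\op_h(\eta\chi(\eta)) + B_{1,h}\op_h(\eta(1-\chi(\eta))).
\end{equation*}
The first operator has compactly supported principal symbol $b_1(y,x',\xi')\eta\chi(\eta)$, so by the definition of $\mu$
\begin{equation*}
\bigl(B_{1,h}\op_h(\eta\chi(\eta))\widetilde{w}_h,\widetilde{w}_h\bigr) \longrightarrow \langle \mu,\, b_1\eta\chi(\eta)\rangle .
\end{equation*}
On the support of the second symbol, for $M_0$ large relative to the $\xi'$-support of $b_1$, the principal symbol $p=\eta^2-(1-|\xi'|^2_{\ov g})$ satisfies $|p|\geq cM_0^2$, so $P_{h,0}$ is microlocally elliptic. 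Constructing an elliptic parametrix inside $\Omega_2$ and using once again the equation and the smallness of the boundary traces, one shows that this remainder is $o_{M_0\to\infty}(1)$ uniformly in $h$. Sending $M_0\to\infty$ and using the compact $\eta$-support of $\mu$ produces $\langle\mu,b_1\eta\rangle$, completing the proof.

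\textbf{Main obstacle.} The key technical point is the elliptic estimate for the remainder $B_{1,h}\op_h(\eta(1-\chi(\eta)))\widetilde{w}_h$. Since $\widetilde{w}_h$ does not vanish on $\Sigma$, one cannot simply extend by zero and apply a global parametrix; instead one must adapt the parametrix construction of Section~\ref{sectionelliptic} to $P_{h,0}$ in $\Omega_2$, microlocalized to $|\eta|$ large, while carefully tracking the contribution of the (small but nonzero) traces $\widetilde{w}_h|_\Sigma$ and $h\partial_\nu\widetilde{w}_h|_\Sigma$ through a boundary-to-interior formula analogous to \eqref{pseudo-relation}, and verifying that the resulting boundary layer terms vanish in the double limit $h\to 0$, $M_0\to\infty$.
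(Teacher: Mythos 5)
Your approach diverges from the paper's at the key step. The paper cuts off in the spatial variable $y$ (and, for glancing $\rho_0$, in the tangential quantity $r$), keeping every operator either tangential or supported strictly away from $\Sigma$. You cut off in the \emph{normal frequency} $\eta$, introducing non-tangential operators $\psi(hD_y)$ and $\chi(hD_y)$ acting all the way up to $y=0$. This is not a harmless variant, for the following reason.

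The assertion ``$\bigl(B_{0,h}\psi(hD_y)\widetilde{w}_h,\widetilde{w}_h\bigr)\to\langle\mu,b_0\psi(\eta)\rangle$ by the definition of the semiclassical measure'' (and the analogous one for $B_{1,h}\mathrm{Op}_h(\eta\chi(\eta))$) is not a definition: near $\Sigma$ the measure $\mu$ lives on the compressed bundle $Z$ and is characterized by its action on \emph{tangential} test operators, precisely because $\eta$ does not descend to a function on $Z$ at $y=0$. Establishing that quadratic forms of operators such as $B_{0,h}\psi(hD_y)$ or $B_{1,h}\mathrm{Op}_h(\eta\chi(\eta))$ converge to a pairing against $\mu$ up to the boundary is exactly the type of statement the lemma is meant to prove; invoking it to prove the lemma is circular. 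The integration by parts hidden in computing such a quadratic form on the half-space $\{y>0\}$ produces boundary terms whose vanishing in the limit has to be argued — the paper does so via Lemma~\ref{propagation:boundary} together with the vanishing $\mu\mathbf{1}_{\mathcal{H}\cup\mathcal{E}}=0$ and the quantitative near-glancing bound of Proposition~\ref{muH=0}(2), none of which appear in your argument. A related difficulty affects your elliptic remainder: near $|\eta|\sim M_0$ the uniform-in-$h$ control requires a boundary parametrix with explicit treatment of the Dirichlet and Neumann traces (analogous to Section~\ref{sectionelliptic}), and this is precisely where the boundary layers of your operators would reappear; your sketch does not track them.

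By contrast, the paper's decomposition $B_{1,h}=B_{1,h,\epsilon}+B_{1,h}^{\epsilon}$ via $1-\varphi(y/\epsilon)$ makes $B_{1,h,\epsilon}\,hD_y$ a genuine interior operator, for which the ordinary interior defect measure applies without ambiguity; the near-boundary piece is then estimated by Cauchy–Schwarz and the facts $\mu\mathbf{1}_{\mathcal{E}\cup\mathcal{H}}=0$ (so that the limit mass there is zero) and, in the glancing case, by the $|r|^{1/2}$ bound of Proposition~\ref{muH=0}(2) after an additional cutoff in $r$. That additional cutoff in $r$ is tangential, so the argument never leaves the framework in which $\mu$ is defined. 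You should rework the proof along these lines rather than attempting to extend the measure pairing to normal-frequency cutoffs.
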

\begin{proof}
	First we remark that the expression $\langle\mu,b_0+b_1\eta \rangle$ is well-defined, since $\mu$ belongs to the dual of $C^0(Z)$ and $\mu ( \mathcal{H})=0$, and in particular, by elliptic regularity, $\mu\mathbf{1}_{|\eta|>1}=0$.  The convergence of the quadratic form $(B_{0,h}\widetilde{w}_h,\widetilde{w}_h)$ to $\langle\mu,b_0\rangle$ is just the definition of the semiclassical measure $\mu$. If $\rho_0\in\mathcal{E}$, the contributions of both sides of \eqref{measureidentity} is $o(1)$, thanks to the elliptic regularity (see the proof of Proposition \ref{elliptic}). Next we assume that $\rho_0\in\mathcal{H}\cup\mathcal{G}$. Take $\varphi\in C_c^{\infty}(-1,1)$, $\varphi$ is equal to $1$ in a neighborhood of $(-1/2,1/2)$. For $\epsilon>0$, we write
	$$ B_{1,h,\epsilon}:=\big(1-\varphi\big(\frac{y}{\epsilon}\big)\big)B_{1,h}, \quad B_{1,h}^{\epsilon}:=B_{1,h}-B_{1,h,\epsilon}.
	$$
	Taking $h\rightarrow 0$ first  we obtain that
	$$ \big(B_{1,h,\epsilon}\frac{h}{i}\partial_y\widetilde{w}_h,\widetilde{w}_h\big)_{L^2(\Omega_2)}\rightarrow \langle\mu,\big(1-\varphi\big(\frac{y}{\epsilon}\big)\big)b_1\eta \rangle.
	$$
If $\rho_0\in\mathcal{H}$, then taking $\epsilon\rightarrow 0$, we obtain that
$$ \lim_{\epsilon\rightarrow 0}\langle\mu,\big(1-\varphi\big(\frac{y}{\epsilon}\big)\big)b_1\eta \rangle=\langle\mu,\mathbf{1}_{y>0}b_1\eta \rangle=\langle\mu, b_1\eta\rangle,
$$
since $\mu\mathbf{1}_{\mathcal{H}\cup\mathcal{E}}=0$. It remains to estimate the contribution of $\big(B_{1,h}^{\epsilon}\frac{h}{i}\partial_y\widetilde{w}_h,\widetilde{w}_h\big)$. For fixed $\epsilon>0$, we have
\begin{align*}
&\Big|\limsup_{h\rightarrow 0}\big(\big(B_{1,h}^{\epsilon}h\partial_y\widetilde{w}_h,\widetilde{w}_h\big) \big)_{L^2(\Omega_2)} \Big| \leq  \limsup_{h\rightarrow 0}\Big(\|\varphi(y/\epsilon)B_{1,h}^*\widetilde{w}_h\|_{L^2(\Omega_2)}\|h\partial_y\widetilde{w}_h\|_{L^2(\Omega_2)} \Big).
\end{align*}
Since on supp$(\mu\mathbf{1}_{y>0})$, $|\eta|\leq 1$, together with the fact that $\rho_0\in\mathcal{H}\cap\mathcal{E}$, we deduce that the right side converges to $0$ as $\epsilon\rightarrow 0$.

Now suppose that $\rho_0\in\mathcal{G}$. For any $\epsilon>0, \delta>0$, we decompose $B_{1,h}=B_{1,h}^{\epsilon}+B_{1,h}^{\epsilon,\delta}+B_{1,h,\delta}^{\epsilon}$, with
\begin{align*}
& B_{1,h,\epsilon}=\Big(1-\varphi\big(\frac{y}{\epsilon}\big)\Big)B_{1,h},\\
& B_{1,h}^{\epsilon,\delta}=\mathrm{Op}_h\Big(\varphi\big(\frac{y}{\epsilon}\big)\varphi\big(\frac{r}{\delta}\big)\Big)B_{1,h},\\
&B_{1,h,\delta}^{\epsilon}=\mathrm{Op}_h\Big(\varphi\big(\frac{y}{\epsilon}\big)\Big(1-\varphi\big(\frac{r}{\delta}\big)\Big)\Big)B_{1,h}.
\end{align*} 
By the same argument, we have
$$ \lim_{\epsilon\rightarrow 0}\lim_{h\rightarrow 0}\big(B_{1,h,\epsilon}hD_y\widetilde{w}_h,\widetilde{w}_h\big)_{L^2(\Omega_2)}=\langle\mu,b_1\eta\mathbf{1}_{y>0}\rangle=\langle\mu,b_1\eta\mathbf{1}_{\rho\notin\mathcal{H}}\rangle,
$$
since $\mu\mathbf{1}_{\mathcal{H}\cup\mathcal{E}}=0$. Next, from (2) of Proposition \ref{muH=0}, we have
$$ \limsup_{\epsilon\rightarrow 0}\limsup_{h\rightarrow 0}\big(B_{1,h}^{\epsilon,\delta}hD_y\widetilde{w}_h,\widetilde{w}_h\big)_{L^2(\Omega_2)}\leq C\delta,
$$
which converges to $0$ if we let $\delta\rightarrow 0$. Finally, by Cauchy-Schwartz, 
$$\big|\big(B_{1,h,\delta}^{\epsilon}hD_y\widetilde{w}_h,\widetilde{w}_h \big)_{L^2(\Omega_2)}\big|\leq \|hD_y\widetilde{w}_h\|_{L^2(\Omega_2)}\Big\|B_{1,h}^*\mathrm{Op}_h\Big(\varphi\big(\frac{y}{\epsilon}\big)\Big(1-\varphi\big(\frac{r}{\delta}\big)\Big) \Big)^*\widetilde{w}_h \Big\|_{L^2}.
$$
Taking the triple limit, we have
$$ \limsup_{\delta\rightarrow 0}\limsup_{\epsilon\rightarrow 0}\limsup_{h\rightarrow 0}\big|\big(B_{1,h,\delta}^{\epsilon}hD_y\widetilde{w}_h,\widetilde{w}_h\big)_{L^2(\Omega_2)}\big|\leq \langle\mu,|b_1|^2\mathbf{1}_{y=0}\mathbf{1}_{r\neq 0}\rangle=0,
$$
since $\mu\mathbf{1}_{\mathcal{E}\cup\mathcal{H}}=0$. This completes the proof of Lemma \ref{H1measure}. 
\end{proof}

As in \cite{BL03}, we define the function
\begin{align*}
\theta(y,x';\eta,\xi')=\frac{\eta}{|\xi'|} \text{ if } y>0; \quad \theta(y,x',\eta,\xi')=i\frac{\sqrt{-r_0(x',\xi')}}{|\xi'|} \text{ on } \mathcal{E}.
\end{align*} 
Since $\mu\mathbf{1}_{\mathcal{H}}=0$, $\theta$ is $\mu$ almost everywhere defined as a function on $Z$. Formally, 
$$ \sigma\big(\frac{i}{h}[P_{h,0},B_h]\big)=\{\eta^2-r,b_0+b_1\eta \}=a_0+a_1\eta+a_2\eta^2,
$$
where
\begin{align}\label{relation}
a_0=b_1\partial_yr-\{r,b_0\}',\quad a_1=2\partial_yb_0-\{r,b_1\}',\quad a_2=2\partial_yb_1,
\end{align}
and $\{\cdot,\cdot\}'$ is the Poisson bracket for $(x',\xi')$ variables. By expanding the commutator, we find
\begin{align}\label{Malagrange}
\frac{i}{h}[P_{h,0},B_h]=A_0+A_1hD_y+A_2h^2D_y^2+h\op_h(S_{\partial}^{0}+S_{\partial}^0\eta),
\end{align} 
where $A_0,A_1,A_2$ are tangential operators with symbols $a_0,a_1,a_2$, with respectively, and $S_{\partial}^0$ stands for the tangential symbol class of order $0$. 
We now have all the ingredients to present the propagation formula for the defect measure in the spirit of \cite{BL03}:
\begin{prop}\label{propagationformula}
	Assume that $B_h=B_{h,0}+B_{h,1}hD_y$, where $B_{h,0}, B_{h,1}$ are tangential operators of order 0 with symbols $b_0, b_1$, with respectively. Assume that $b=b_0+b_1\eta$. Define the formal Poisson bracket
	$$ \{p,b\}=(a_0+a_2r)+a_1\theta|\xi'|\mathbf{1}_{\rho\notin\mathcal{H}},
	$$	
	where $a_0,a_1,a_2$ are given by \eqref{relation}.
	Then any defect measures $\mu,\nu_0$ of $(\widetilde{w}_h)$, $(h\partial_{\nu}\widetilde{w}_h)|_{\Sigma}$ satisfy the relation
	$$ \langle\mu,\{p,b\}\rangle=-\langle\nu_0,b_1 \rangle.
	$$
	Moreover, if $b\in C^0(Z)$, we have
	
\begin{equation}\label{propag-1}
 \langle\mu,\{p,b\} \rangle=0.
	\end{equation}
\end{prop}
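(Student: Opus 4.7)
The plan is to take the semiclassical limit of the boundary commutator identity from Lemma \ref{propagation:boundary}, matching each term to the spectral identity \eqref{Malagrange} and then re-expressing the limit symbol in terms of the compressed bundle $Z$. Concretely, I would start from
$$\frac{1}{ih}\bigl([P_{h,0},B_h]\widetilde{w}_h,\widetilde{w}_h\bigr)_{L^2(\Omega_2)}=i\bigl(B_{1,h}|_{y=0}(h\partial_y\widetilde{w}_h)|_{y=0},(h\partial_y\widetilde{w}_h)|_{y=0}\bigr)_{L^2(\Sigma)}+o(1)$$
and plug in \eqref{Malagrange}. The first-order pieces $A_0+A_1hD_y$ fall directly under Lemma \ref{H1measure} and converge to $\langle\mu,a_0+a_1\eta\rangle$; the remainder $h\,\op_h(S^0_{\partial}+S^0_{\partial}\eta)$ contributes $o(1)$ because $\widetilde{w}_h$ and $h\partial_y\widetilde{w}_h$ are $L^2(\Omega_2)$-bounded.

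The main technical step is the second-order piece $A_2h^2D_y^2$. Using the local expression $P_{h,0}=-\tfrac{1}{\sqrt{|\bar g|}}h\partial_y\sqrt{|\bar g|}h\partial_y-R_h$, one has $-h^2\partial_y^2=P_{h,0}+R_h+hM_h(h\partial_y)$ for some order-zero tangential $M_h$. Applied to $\widetilde{w}_h$, the $P_{h,0}\widetilde{w}_h$ term is $o_{H^1}(h)+o_{L^2}(h^2)$ and contributes $o(1)$ to the pairing with $\widetilde{w}_h$, while the $hM_h(h\partial_y)\widetilde{w}_h$ term gives $O(h)$ after Cauchy--Schwarz. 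Therefore $(A_2h^2D_y^2\widetilde{w}_h,\widetilde{w}_h)=(A_2R_h\widetilde{w}_h,\widetilde{w}_h)+o(1)\to\langle\mu,a_2r\rangle$, since $A_2R_h$ is a bounded tangential $h$-pseudodifferential operator of semiclassical order $0$ with principal symbol $a_2r$. Combining these three contributions and using $\tfrac{1}{ih}=-\tfrac{i}{h}$ gives
$$\lim_{h\to 0}\frac{1}{ih}\bigl([P_{h,0},B_h]\widetilde{w}_h,\widetilde{w}_h\bigr)=-\langle\mu,a_0+a_1\eta+a_2r\rangle.$$
Since $\mu$ lives on $\mathrm{Char}(P)$ (where $\eta^2=r$) and $\mu(\mathcal{H})=0$ by Proposition \ref{muH=0}, on the support of $\mu$ we have $a_2\eta^2=a_2r$ and $a_1\eta=a_1\theta|\xi'|\mathbf{1}_{\rho\notin\mathcal{H}}$; thus the limit equals $-\langle\mu,\{p,b\}\rangle$. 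The right-hand boundary pairing converges, by the definition of the semiclassical measure $\nu_0$ of $(h\partial_\nu\widetilde{w}_h)|_\Sigma$, to $i\langle\nu_0,b_1|_{y=0}\rangle$, yielding the identity $\langle\mu,\{p,b\}\rangle=-\langle\nu_0,b_1\rangle$.

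For the last statement \eqref{propag-1}, the continuity hypothesis $b\in C^0(Z)$ is exploited as follows. Near a boundary hyperbolic point the two sheets $\eta=\pm\sqrt{r_0}$ of $\mathrm{Char}(P)$ are identified under the compression map $j$, so the value of $b=b_0+b_1\eta$ at $y=0$ is unambiguous only if $b_1(0,x',\xi')\sqrt{r_0}=0$ there, i.e.\ $b_1$ vanishes on $\mathcal{H}$. Combined with the fact that $\nu_0$ has no mass in $\mathcal{E}$ (elliptic regularity, via the same Poisson-integral argument as in Proposition \ref{elliptic} applied to the Neumann trace) and no mass in $\mathcal{G}$ (from the glancing estimate in the spirit of Proposition \ref{muH=0}(2), since at glancing points the normal derivative has vanishing tangential Fourier symbol weight relative to $|\xi'|$), one concludes $\langle\nu_0,b_1\rangle=0$, and therefore $\langle\mu,\{p,b\}\rangle=0$.

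The most delicate point will be the last step: reconciling the compression-induced vanishing of $b_1$ on $\mathcal{H}$ with the structure of the Neumann-trace measure $\nu_0$ near the glancing set $\mathcal{G}$. Elliptic regularity disposes of $\mathcal{E}$ and the continuity condition disposes of $\mathcal{H}$, but controlling the glancing contribution to $\nu_0$ requires a careful Cauchy--Schwarz argument against the tangential operator norm, analogous to the one in Proposition \ref{muH=0}(2), rather than a pointwise vanishing of $b_1$; this is where the definition of $\{p,b\}$ with the factor $\mathbf{1}_{\rho\notin\mathcal{H}}$ is essential to make the identity independent of ambiguities in extending $a_1\eta$ to the compressed bundle.
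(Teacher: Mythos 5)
The paper offers no proof of this proposition beyond a citation to \cite{BL03}, so your attempt to reconstruct the argument from the paper's own auxiliary lemmas (Lemma~\ref{propagation:boundary}, the expansion~\eqref{Malagrange}, Lemma~\ref{H1measure}, and Proposition~\ref{muH=0}) is the right idea, and the treatment of the $A_2 h^2 D_y^2$ term via the substitution $-h^2\partial_y^2 = P_{h,0}+R_h+h M_h(h\partial_y)$ is sound. However, there is a factor-of-$i$ error in the boundary term that you do not resolve: Lemma~\ref{propagation:boundary} is stated for $B_h=B_{0,h}+B_{1,h}h\partial_y$, while the proposition and \eqref{Malagrange} use $B_h=B_{h,0}+B_{h,1}hD_y$, so the lemma's $B_{1,h}$ is $\tfrac1i B_{h,1}$ and the prefactor $i$ in~\eqref{eq:propagationinterface} cancels. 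The boundary term therefore converges to $\langle\nu_0,b_1\rangle$, not $i\langle\nu_0,b_1\rangle$ as you state; with $i\langle\nu_0,b_1\rangle$ your last display does not imply $\langle\mu,\{p,b\}\rangle=-\langle\nu_0,b_1\rangle$, and as written there is a non sequitur.

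The second issue is in the passage to \eqref{propag-1}. Your observation that continuity of $b$ on $Z$ forces $b_1(0,x',\xi')=0$ on $\mathcal{H}$ (because $j$ identifies $\eta=\pm\sqrt{r_0}$) is correct, but you then try to finish by arguing $\nu_0(\mathcal{G})=0$, a claim that is neither needed nor true in general (glancing rays can carry mass of the Neumann-trace measure). The correct and shorter route is simply that $b_1$ is a smooth, hence continuous, symbol vanishing on $\mathcal{H}=\{r_0>0\}$, hence it vanishes on $\ov{\mathcal{H}}\supset\mathcal{G}$; combined with $\nu_0(\mathcal{E})=0$ from elliptic regularity, $\nu_0$ is supported where $b_1=0$, so $\langle\nu_0,b_1\rangle=0$. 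There is no need for the Cauchy--Schwarz control near $\mathcal{G}$ you invoke in your closing paragraph; your hedge that this ``requires a careful Cauchy--Schwarz argument \ldots rather than a pointwise vanishing of $b_1$'' is precisely backwards.
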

\begin{proof}
See \cite{BL03}.
\end{proof}
Moreover, we have
\begin{prop}\label{zerodiffractive}
	$\mu\big(\mathcal{G}^{2,+}\big)=0$.
\end{prop}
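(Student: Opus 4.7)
Plan. The statement is local on $\Omega_2$ near a diffractive glancing point of its boundary $\Sigma$, and the transmission conditions on $\Sigma$ play no role in the proof; the argument I would follow is the one carried out for a single-domain wave equation in \cite{BL03}. The geometric content at $\rho_0 \in \mathcal{G}^{2,+}$ is that $r_0(\rho_0) = 0$ and $r_1(\rho_0) > 0$, so the generalized bicharacteristic of Definition \ref{generalbicha} coincides with the Hamiltonian flow of $p = \eta^2 - r$ and has strictly convex $y$-profile ($\ddot y (0) = 2\, r_1 > 0$). In particular, the orbit through $\rho_0$ is tangent to $\{y = 0\}$ only at the instant $s = 0$ and lies in the open interior $\{y > 0\}$ for $s \neq 0$.

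Concretely, I would apply the propagation identity \eqref{propag-1} to the tangential family
\[
b_\delta(y, x', \xi') \;=\; \chi(y/\delta)\, \phi(x', \xi'), \qquad b_1 \equiv 0,
\]
with $\phi \in C_c^\infty(T^*\Sigma)$, $\phi \geq 0$, supported in a small neighborhood $V$ of $\rho_0$ and satisfying $\phi(\rho_0) > 0$, and $\chi \in C_c^\infty([0, \infty))$ nonincreasing with $\chi \equiv 1$ near the origin. Using the formulas \eqref{relation}, the identity $\langle \mu, \{p, b_\delta\}\rangle = 0$ reads
\[
\big\langle \mu,\; -\chi(y/\delta)\,\{r, \phi\}' \;+\; 2\delta^{-1}\chi'(y/\delta)\,\phi\,\theta\,|\xi'|\, \mathbf{1}_{\rho \notin \mathcal{H}}\,\big\rangle \;=\; 0.
\]

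In the limit $\delta \to 0$, the first term tends by dominated convergence (and $\mu\mathbf{1}_{\mathcal H} = 0$) to $-\langle \mu|_{\{y = 0\}}, \{r_0, \phi\}'\rangle$, which can be killed by choosing $\phi$ symmetric along the tangential flow $H_{r_0}$ restricted to $\mathcal{G}$, itself an invariance consequence of \eqref{propag-1} applied to purely tangential symbols. The decisive term is the second one: on $\mathrm{supp}(\mu) \cap \{y > 0\}$ close to $\rho_0$, the characteristic relation $\eta^2 = r$ combined with $r = y\, r_1 + O(y^2)$ yields $|\theta|\,|\xi'| = \sqrt{r} = O(\sqrt{y})$, so the integrand is only $O(\delta^{-1/2})$ on the shell $\{y \sim \delta\}$. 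Decomposing $\mu = \mu^+ + \mu^-$ according to $\mathrm{sign}(\eta)$, the contributions of the two branches have opposite signs; the interior invariance of $\mu$ under $H_p$ (Proposition \ref{propagation:interior}) through the turning point identifies the $\mu^+$-mass with the $\mu^-$-mass on each cross-section of a narrow tube around the orbit, and the two contributions cancel at leading order, leaving a bounded remainder of order $\sqrt{r_1(\rho_0)}\,\mu(V \cap \{y = 0\})$.

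Taking $\delta \to 0$ then gives an identity of the form $c\,\mu(V \cap \{y = 0\}) = 0$ with $c \neq 0$, forcing $\mu(V \cap \{y = 0\}) = 0$, and by arbitrariness of $\rho_0$ one concludes $\mu(\mathcal{G}^{2,+}) = 0$. I expect the main technical obstacle to be the quantitative justification of the cancellation between the $\mu^\pm$-branches; this is a Melrose--Sj\"ostrand type normal-form computation near a diffractive glancing point, carried out in \cite{BL03}, and it transfers without modification to the present transmission setting since the argument is entirely internal to $\Omega_2$.
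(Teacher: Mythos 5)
Your framing is correct up to a point: the statement is indeed local in $\Omega_2$, independent of the transmission conditions, and follows the template of \cite{BL03}, with the relevant geometry being $r_1(\rho_0)>0$, so that the $H_p$-orbit through $\rho_0$ has $\ddot y=2r_1>0$ and leaves $\{y=0\}$ immediately on both sides. But the test symbol you choose does not produce the sign-definite term that carries the conclusion, and the argument has a genuine gap.

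The paper's proof (spelled out in the proof of Proposition~\ref{zerogliding}, declared to be \emph{exactly the same}) applies the identity $\langle\mu,\{p,b\}\rangle=-\langle\nu_0,b_1\rangle$ to a symbol with a nonzero $\eta$-coefficient, $b=b_{1,\epsilon}\eta$ with $b_{1,\epsilon}=\psi(y/\epsilon^{1/2})\psi(r/\epsilon)\kappa$ and $\kappa\geq 0$ supported near $\rho_0$. Putting the cutoff in the $b_1$-slot makes the term $a_0=b_1\partial_y r$ appear; with the anisotropic (parabolic) scaling $y\sim\epsilon^{1/2}$, $r\sim\epsilon$, the competing pieces $a_2 r$ and $a_1\theta|\xi'|$ are uniformly dominated and vanish pointwise, while $a_0$ survives the limit. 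Dominated convergence then yields $\langle\mu,\kappa|_{y=0}\,\partial_y r\,\mathbf{1}_{r_0=0}\rangle=-\langle\nu_0,\kappa\,\mathbf{1}_{r_0=0}\rangle$; since $\partial_y r>0$ on $\mathcal{G}^{2,+}$ and $\kappa\geq 0$, the left side is nonnegative while the right side is nonpositive, so both vanish and $\mu(\mathcal{G}^{2,+})=0$.

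With $b_1\equiv 0$, as in your proposal, the term $b_1\partial_y r$ is absent from the outset. What remains, $2\delta^{-1}\chi'(y/\delta)\phi\,\theta|\xi'|$, has no uniform dominant: on $\mathrm{supp}(\phi)$, a fixed $\delta$-independent neighbourhood of $\rho_0$, $r$ does not shrink to zero as $\delta\to 0$, so on the characteristic set near $y\sim\delta$ the factor $|\theta||\xi'|=\sqrt r$ is merely bounded rather than $O(\sqrt\delta)$, and the term is genuinely of size $\delta^{-1}$. The proposed rescue---cancellation of the $\eta>0$ and $\eta<0$ branches, leaving a residual $\sim\sqrt{r_1}\,\mu(V\cap\{y=0\})$---is not substantiated and is not how this estimate is closed; in a perfectly symmetric situation the two branches cancel to all orders and yield no information. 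Moreover, killing the first term $\langle\mu|_{y=0},\{r_0,\phi\}'\rangle$ by appealing to invariance of $\mu$ along the gliding flow is circular: that invariance is precisely what Theorem~\ref{propagationtheorem} establishes, and it requires Proposition~\ref{zerodiffractive} as an input. The missing idea is therefore to place the cutoff in the $\eta$-slot ($b_1\neq 0$) with the scaling $\psi(y/\epsilon^{1/2})\psi(r/\epsilon)$, so that the commutator produces the sign-definite $b_1\partial_y r$ rather than the oscillatory $\theta$-term.
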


As showed in \cite{BL03}, we obtain that the measure $\mu$ is invariant by the flow of Melrose-Sj\"ostrand. More precisely, we have
\begin{thm}[\cite{BL03}]\label{propagationtheorem} 
Assume that $\mu$ is a semi-classical measure on $^bT^*\ov{\Omega}$ associated with the sequence $(\widetilde{w}_h)$ satisfying \eqref{propag-1} and Proposition~\ref{zerodiffractive}. Then $\mu$ is invariant under the Melrose-Sj\"ostrand flow $\phi_s$.
\end{thm}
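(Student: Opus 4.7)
The plan is to establish invariance of $\mu$ under the Melrose-Sj\"ostrand flow $\phi_s$ by checking it separately on the strata that partition $^bT^*\overline{\Omega}_2$: the interior characteristic set, the hyperbolic boundary points $\mathcal{H}$, and the glancing set $\mathcal{G}$. The elliptic region $\mathcal{E}$ already carries no mass by Proposition \ref{elliptic}, and the diffractive subset $\mathcal{G}^{2,+}$ carries no mass by Proposition \ref{zerodiffractive}, so only the remaining pieces must be treated. The central tool throughout is the Poisson bracket identity \eqref{propag-1}, fed with carefully designed families of test symbols $b = b_0 + b_1\eta$.

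In the interior of $\Omega_2$, I would take $b = b_0(x,\xi)$ supported in a compact subset of $T^*\Omega_2$; Proposition \ref{propagation:interior} gives $\langle \mu, H_p b_0 \rangle = 0$. Approximating indicators of thin tubes along a bicharacteristic segment by such symbols produces invariance of $\mu$ under the Hamilton flow of $p$ on $\{p=0\} \cap T^*\Omega_2$. This step is standard H\"ormander-type propagation.

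At a hyperbolic point $\rho_0 \in \mathcal{H}$, the preimage $j^{-1}(\rho_0) \cap \{p=0\}$ consists of two points $\rho^{\pm}$ with $\eta = \pm\sqrt{r_0}$. The plan is to feed \eqref{propag-1} with symbols $b = b_0 + b_1\eta$ supported near $\rho_0$. Choosing $b_1 \equiv 0$ (purely tangential test function) and using that $\nu_0$ vanishes near hyperbolic points (which follows from the bound $\|(h\partial_\nu \widetilde{w}_h)|_{\Sigma}\|_{H^{-1/2}} = O(h^{1/2})$ combined with the hyperbolic elliptic gain on the principal symbol) gives invariance of the sum $\mu^+ + \mu^-$ under the tangential projection of $H_p$. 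Choosing $b_1 \not\equiv 0$ and exploiting the reflection relation between $\rho^+$ and $\rho^-$ in the formula yields the reflection identity $\mu^+ = \mu^-$, i.e.\ the geometric optics law at $\mathcal{H}$.

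The main obstacle is the glancing set $\mathcal{G} \setminus \mathcal{G}^{2,+}$. One cannot simply support the test symbol in a thin tube around a generalized bicharacteristic because the curve may hug the boundary and its derivative is only piecewise continuous. My strategy, following the Melrose-Sj\"ostrand framework adapted in \cite{BL03}, is: for a gliding point $\rho_0 \in \mathcal{G} \setminus \mathcal{G}^{2,+}$, regularize the bicharacteristic as the solution of the smoothed Hamilton equation
\[
\dot\gamma = H_p(\gamma) - \frac{H_p^2 y}{H_y^2 p}(\gamma)\, H_y,
\]
construct cut-off symbols along these approximating curves (ensuring enough $C^1$ regularity for the pairing $\langle \mu, \{p,b\}\rangle$ to be well defined on $C^0(Z)$), apply \eqref{propag-1}, and pass to the limit. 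The vanishing of $\mu$ on $\mathcal{G}^{2,+} \cup \mathcal{E}$ rules out escape of mass through diffractive or elliptic directions in the limit, and the standard Gronwall argument on the approximating flow upgrades the infinitesimal identity to invariance under $\phi_s$. The hard part is precisely the construction of admissible test symbols near $\mathcal{G}\setminus\mathcal{G}^{2,+}$ that are compatible with the kinked geometry of the flow; this is the core technical content of \cite{BL03} and I would import it wholesale rather than reproduce it.
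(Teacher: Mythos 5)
Your proposal takes essentially the same approach as the paper: the paper simply cites Theorem 1 of \cite{BL03} (the equivalence of measure invariance with $H_p\mu=0$ plus $\mu(\mathcal{G}^{2,+})=0$), noting only that the microlocal-defect-measure argument translates verbatim to the semiclassical setting, and your sketch --- after narrating the interior, hyperbolic, and glancing cases --- likewise defers to \cite{BL03} for the hard technical construction near $\mathcal{G}\setminus\mathcal{G}^{2,+}$. One small imprecision: the measure $\mu$ already lives on the compressed set $Z$ where $j$ identifies the two points $\rho^{\pm}$ above a hyperbolic boundary point, so there are no separate $\mu^+,\mu^-$ to be shown equal; the reflection law is built into the geometry of $Z$, and the hypothesis is \eqref{propag-1} itself rather than a separately re-derived vanishing of $\nu_0$.
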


\begin{rem}
This is a consequence of Theorem 1 in \cite{BL03} which asserts the equivalence between the measure invariance and the propagation formula $H_p(\mu)=0$ together with $\mu(\mathcal{G}^{2,+})=0$. Though Theorem 1 in \cite{BL03} is stated and proved in the context of micro-local defect measure, it also holds true in the context of the semiclassical measure from the word-by-word translation.
\end{rem}

\subsection{The last step to the proof of the resolvent estimate in Theorem \ref{thm:resolvent}}
In this subsection, we take $\widetilde{w}_h=w_2$. To finish the contradiction argument in the proof of \eqref{eq:resolvent}, it suffices to show that $\mu=0$. Let $\mu$ be the corresponding semiclassical measure and $\nu_0$ be the semiclassical measure of $h\partial_{\nu}u_2|_{\Sigma}$. Since $h\partial_{\nu}u_2|_{\Sigma}=o_{H^{-\frac{1}{2}}(\Sigma}(h^{\frac{1}{2}})$, we have that $\langle\nu_0,b_1\rangle=0$ for any compactly supported symbol $b_1(x',\xi')$. Thanks to (H), along the Melrose-Sj\"ostrand flow of $^bT^*\ov{\Omega}_2$ issued from points in $^bT^*\ov{\Omega}_2$, there must be some points reaching $\mathcal{H}(\Sigma)\cup\mathcal{G}^{2,-}(\Sigma)$. By the property of the Melrose-Sj\"ostrand flow on $^bT^*\ov{\Omega}_2$, to show that $\mu=0$, we need to verify that
$$ \mu\big(\mathcal{G}^{2,-}(\Sigma)\big)=0
$$ 
and $\mu=0$ near a neighborhood of $\rho_0\in\mathcal{H}(\Sigma)$.
\begin{prop}\label{zerogliding}
$\mu\big(\mathcal{G}^{2,-}(\Sigma)\big)=0.$	
\end{prop}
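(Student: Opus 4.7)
The plan is to apply the propagation formula of Proposition~\ref{propagationformula} with a carefully chosen tangential test symbol localized near an arbitrary point $\rho_0\in\mathcal{G}^{2,-}$, exploiting the negativity of $r_1=\partial_y r|_{y=0}$ at such points together with the vanishing of the Neumann-trace semiclassical measure $\nu_0$.

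I would take $b=b_1(x',\xi')\eta$ (i.e.\ $b_0=0$), where $b_1\geq 0$ is a smooth function compactly supported in a small neighborhood $V\subset T^*\Sigma$ of $\rho_0$, chosen so that $b_1(\rho_0)>0$ and $r_1<-c_0<0$ throughout $V$. With this choice, the coefficients in~\eqref{relation} are $a_0=b_1\partial_y r$, $a_1=-\{r,b_1\}'$ and $a_2=0$. Since the function $\theta$ vanishes identically on the glancing set $\mathcal{G}$, the formal Poisson bracket of Proposition~\ref{propagationformula} reduces on $\mathcal{G}$ to $\{p,b\}|_{\mathcal{G}}=a_0|_{y=0}=b_1 r_1\le 0$, strictly negative wherever $b_1>0$.

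The next step is to reduce the pairing $\langle\mu,\{p,b\}\rangle$ to the slice $\mathcal{G}^{2,-}\cap V$. By Proposition~\ref{elliptic} the measure $\mu$ vanishes on $\mathcal{E}$; by Proposition~\ref{muH=0}(1) it vanishes on $\mathcal{H}$; by Proposition~\ref{zerodiffractive} it vanishes on $\mathcal{G}^{2,+}$. For the interior contribution $\{y>0\}$ near $\rho_0$, the Taylor expansion $r(y,x',\xi')=r_0+y r_1+O(y^2)$ with $r_1<0$ shows that for $(x',\xi')$ near $\rho_0$ with $r_0\le 0$, we have $r<0$ on $\{0<y\ll 1\}$, so $\mathrm{Char}(P)$ is empty there and $\mu$ cannot carry mass. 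For $r_0>0$ small, the characteristic set inside $\Omega_2$ consists of short arcs whose two endpoints both lie in $\mathcal{H}$, and combining the interior invariance of $\mu$ (Proposition~\ref{propagation:interior}) with $\mu\mathbf{1}_{\mathcal{H}}=0$ forces $\mu$ to vanish along such arcs as well.

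Finally, the microlocal $L^2$ estimate $\|\varphi\psi(hD_{x'})\varphi h\partial_\nu w_2|_\Sigma\|_{L^2(\Sigma)}=o(1)$ in~\eqref{eq:w2} implies that $\nu_0$ annihilates any symbol compactly supported in $(x',\xi')$, whence $\langle\nu_0,b_1\rangle=0$. Proposition~\ref{propagationformula} then yields
\[
\int_{\mathcal{G}^{2,-}\cap V} b_1 r_1\,d\mu \;=\; \langle\mu,\{p,b\}\rangle \;=\; -\langle\nu_0,b_1\rangle \;=\; 0.
\]
Since $b_1 r_1\le 0$ on $V$ with strict inequality on $\{b_1>0\}$, the nonnegative measure $\mu$ must vanish on a neighborhood $V'\subset\mathcal{G}^{2,-}$ of $\rho_0$; by arbitrariness of $\rho_0$, $\mu(\mathcal{G}^{2,-})=0$. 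The most delicate step is the support reduction in the previous paragraph — confirming that $\mu$ carries no mass in the interior $\{y>0\}$ near $\rho_0$ nor along the nearby short hyperbolic bicharacteristics — which rests on a careful combination of interior invariance with the already-established boundary vanishing results.
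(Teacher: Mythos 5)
Your overall strategy is the same as the paper's: test the propagation identity $\langle\mu,\{p,b\}\rangle=-\langle\nu_0,b_1\rangle$ with a symbol of the form $b_1\eta$ supported near $\rho_0\in\mathcal{G}^{2,-}$, use $r_1<0$ there and the vanishing of $\nu_0$ to conclude. Where you diverge is the choice of test symbol: you take $b_1=b_1(x',\xi')$ independent of $y$, while the paper takes $b_{1,\epsilon}=\psi(y/\epsilon^{1/2})\psi(r/\epsilon)\kappa$. This difference is not cosmetic, and it is where your argument has a genuine gap.

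With a $y$-independent $b_1$, the pairing $\langle\mu,\{p,b\}\rangle$ receives a contribution from the interior $\{y>0\}$ which you must separately kill. Your justification — that "combining the interior invariance of $\mu$ (Proposition~\ref{propagation:interior}) with $\mu\mathbf{1}_{\mathcal{H}}=0$ forces $\mu$ to vanish along such arcs" — is false as stated. Interior invariance plus $\mu\mathbf{1}_{\mathcal{H}}=0$ do \emph{not} preclude $\mu$ from charging the open interior part of a transversally reflected arc: the one-dimensional arclength measure on a single hyperbolic arc is invariant under the interior Hamiltonian flow and gives zero mass to the endpoints on $\Sigma$, yet is nonzero. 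What is actually needed to kill the interior contribution is the vanishing of $\mu$ in a full \emph{conic neighborhood} of each hyperbolic point, which is the content of Proposition~\ref{zerohyperbolic} and is proved by a separate factorization/energy argument using the boundary data, not by invariance alone. (One could in principle reorder the two propositions to invoke \ref{zerohyperbolic} here, but your proposal does not do this, and cites only \ref{propagation:interior} and $\mu\mathbf{1}_{\mathcal{H}}=0$.)

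The paper sidesteps this issue entirely. By building the localization $\psi(y/\epsilon^{1/2})\psi(r/\epsilon)$ into $b_1$, the coefficient $a_2=2\partial_y b_{1,\epsilon}$ is no longer zero, but the products $a_2 r$ and $a_1\theta|\xi'|$ are uniformly bounded and converge pointwise to $0$ as $\epsilon\to 0$ off $\{y=0,\,r=0\}$. Dominated convergence then yields $\langle\mu,\{p,b_\epsilon\}\rangle\to\langle\mu,\kappa|_{y=0}\,r_1\,\mathbf{1}_{r=0}\rangle$ without having to know anything about $\mu$ in the interior; the interior mass drops out in the limit. The left-hand side equals $-\langle\nu_0,b_{1,\epsilon}\rangle=0$ for every $\epsilon$, so the limit is $0$, and the strict sign of $r_1$ on $\mathcal{G}^{2,-}$ gives $\mu\mathbf{1}_{\mathcal{G}^{2,-}}=0$. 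If you want to keep the $y$-independent symbol, you would need to precede this proposition by Proposition~\ref{zerohyperbolic}, and replace your interior-invariance argument by an appeal to that result; as written, the support-reduction step does not hold.
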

\begin{proof}
The proof is exactly the same as the proof of Proposition \ref{zerodiffractive}.	We will make use of the formula
	$$ \langle\mu,\{p,b\}\rangle=\langle\nu_0,b_1\rangle
	$$	
	by choosing $b=b_{1,\epsilon}\eta$ with
	$$ b_{1,\epsilon}(y,x',\xi')=\psi\big(\frac{y}{\epsilon^{\frac{1}{2}}}\big)\psi\big(\frac{r(y,x',\xi')}{\epsilon}\big)\kappa(y,x',\xi'),
	$$ 
	where $\psi\in C_c^{\infty}(\R)$ equals to $1$ near the origin and $\kappa(y,x',\xi')\geq 0$ near a point $\rho_0\in\mathcal{G}^{2,-}$. Note that $\{p,b_{\epsilon} \}=(a_{0}+a_2r)+a_1\eta\mathbf{1}_{\rho\notin\mathcal{H}}$, and $a_0,a_1,a_2$ are given by the relation \eqref{relation}. In particular for our choice, by direct calculation we have
	$$ a_0=b_{1,\epsilon}\partial_y r, \quad a_1=-\{r,\kappa\}'\psi\big(\frac{y}{\epsilon^{\frac{1}{2}}}\big)\psi\big(\frac{r}{\epsilon}\big),$$
	and
	$$ a_2=2\partial_yb_{1,\epsilon}=2\epsilon^{-\frac{1}{2}}\psi'\big(\frac{y}{\epsilon^{\frac{1}{2}}}\big)\psi\big(\frac{r}{\epsilon}\big)\kappa+2\frac{\partial_yr}{\epsilon}\psi\big(\frac{y}{\epsilon^{\frac{1}{2}}}\big)\psi'\big(\frac{r}{\epsilon}\big)\kappa+2\psi\big(\frac{y}{\epsilon^{\frac{1}{2}}}\big)\psi\big(\frac{r}{\epsilon}\big)\partial_y\kappa.
	$$
	Observe that $a_2$ is uniformly bounded in $\epsilon$ and for any fixed $(y,x',\xi')$, $ra_2\rightarrow 0$ as $\epsilon\rightarrow 0$. Thus from the dominating convergence, we have
	$$ \lim_{\epsilon\rightarrow 0}\langle\mu,\{p,b_{\epsilon} \}\rangle=\langle\mu,\kappa|_{y=0}\partial_yr\mathbf{1}_{r=0}\rangle.
	$$
	Since $\partial_yr<0$ on $\mathcal{G}^{2,-}$, while $-\langle\nu_0,b_{\epsilon}\rangle=0$, we deduce that $\mu\mathbf{1}_{\mathcal{G}^{2,-}}=0$. This completes the proof of Proposition \ref{zerogliding}.
\end{proof}
\begin{prop}\label{zerohyperbolic}
Let $\rho_0\in\mathcal{H}(\Sigma)$. Let $b(y,x',\xi')$ be a tangential symbol, supported near $\rho_0$. Then
$$ \|\mathrm{Op}_h(b)w_2\|_{L^2(\Omega_2)}+\|h\partial_y\mathrm{Op}_h(b)w_2\|_{L^2(\Omega_2)}=o(1),
$$
as $h\rightarrow 0$.
\end{prop}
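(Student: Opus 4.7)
My strategy would be to combine Proposition \ref{muH=0}(1), which gives vanishing of the semiclassical measure $\mu$ on the hyperbolic boundary set $\mathcal{H}$, with the Melrose--Sj\"ostrand flow invariance of $\mu$ (Theorem \ref{propagationtheorem}). Since $r(\rho_0) > 0$, the flow is regular and transverse to $\Sigma$ at $\rho_0$, which should allow propagating the boundary vanishing into an interior neighborhood.

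First I would reduce the proposition to the statement that $\mu$ vanishes on an open neighborhood of $\rho_0$ in the characteristic variety $Z = \{\eta^2 = r\}$. After extracting a subsequence, standard semiclassical symbolic calculus gives
$$\lim_{h\to 0}\|\mathrm{Op}_h(b) w_2\|_{L^2(\Omega_2)}^2 = \int |b(y,x',\xi')|^2\,d\mu.$$
For the normal derivative term, I would integrate by parts in $y$, use the identity $h^2\partial_y^2 = -P_{h,0} - R_h$ up to lower order from the metric, the interior bound $P_{h,0}w_2 = o_{L^2}(h)$, and the smallness of both boundary traces (controlling the boundary contributions by $h\cdot o(h^{1/2})\cdot o(1) = o(h^{3/2})$), which gives
$$\lim_{h\to 0}\|h\partial_y \mathrm{Op}_h(b) w_2\|_{L^2(\Omega_2)}^2 = \int r(y,x',\xi')|b|^2\,d\mu = \int \eta^2|b|^2\,d\mu,$$
the last equality using that $\mu$ is supported on $Z$.

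Since $\rho_0 \in \mathcal{H}(\Sigma)$, Proposition \ref{muH=0}(1) supplies an open neighborhood $\mathcal V_0 \subset \mathcal H$ of $\rho_0$ on which $\mu = 0$. Because $r(\rho_0) > 0$, in the compressed bundle the incoming sheet $\eta = -\sqrt{r}$ and the outgoing sheet $\eta = +\sqrt{r}$ are joined through the boundary, and any point $(y, x', \xi', \pm\sqrt{r})$ of $Z$ sufficiently close to $\rho_0$ lies on a bicharacteristic visiting $\mathcal V_0$ in time $O(y)$. Hence the flow saturation $\widetilde{\mathcal V} := \bigcup_{|s|<\epsilon}\phi_s(\mathcal V_0)$ is an open subset of $Z \cap {}^bT^*\overline\Omega_2$ covering a full two-sided neighborhood of $\rho_0$. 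By flow invariance of $\mu$ (Theorem \ref{propagationtheorem}), $\mu|_{\widetilde{\mathcal V}} = 0$. Taking $b$ with support close enough to $\rho_0$ that both $\eta$-lifts to $Z$ lie in $\widetilde{\mathcal V}$, one obtains $\int |b|^2 d\mu = \int \eta^2 |b|^2 d\mu = 0$, and Step 1 concludes.

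The main obstacle is verifying cleanly that the MS-flow saturation of a boundary hyperbolic neighborhood $\mathcal V_0$ covers a two-sided interior neighborhood of $\rho_0$ in the compressed bundle $^bT^*\overline\Omega_2$. While geometrically transparent from transversality of $\phi_s$ at hyperbolic points, this demands careful bookkeeping of the compressed bundle conventions and the identification of the two $\eta$-sheets. An alternative more hands-on strategy would be to microlocally factor $P_{h,0} = -(hD_y - \Lambda_+)(hD_y - \Lambda_-) + hR_h$ near $\rho_0$ with $\Lambda_{\pm}$ tangential pseudodifferential operators of symbol $\pm\sqrt{r}$, and then to exploit both boundary hypotheses (the small Dirichlet trace and the microlocal $L^2$-smallness of the Neumann trace coming from the elliptic parametrix of Section \ref{sectionelliptic}) to show that the boundary traces of the incoming and outgoing half-wave components $v_{\pm} = (hD_y - \Lambda_{\mp})w_2$ are $o(1)$ in $L^2$, then to propagate this through transport estimates.
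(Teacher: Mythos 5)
Your primary route has a genuine gap, and it is not the bookkeeping issue you flag at the end; it is the very first use of flow invariance. Proposition~\ref{muH=0}(1) says $\mu\mathbf{1}_{\mathcal{H}}=0$, i.e.\ the measure assigns zero mass to the hyperbolic boundary set. This is essentially an automatic consequence of the transversality of the flow to $\Sigma$ at hyperbolic points (a locally finite flow-invariant measure cannot charge a transversal hypersurface), and it carries no genuine information about $\mu$ near $\rho_0$. The set $\mathcal{V}_0\subset\mathcal{H}$ you extract is a relatively open subset of a codimension-one subset of $Z$; it is \emph{not} open in ${}^bT^*\overline\Omega_2$, and flow invariance $\mu(\phi_s^{-1}(A))=\mu(A)$ does not propagate vanishing from a measure-zero set. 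Concretely, for the model translation flow $\phi_s(y,x)=(y-s,x)$, the measure $dy\otimes\delta_{x=0}$ is invariant, gives zero mass to $\{y=0\}$, yet is nonzero on every two-sided neighborhood of $(0,0)$; the analogous counterexample here is the defect measure of a wave packet concentrating on a bicharacteristic that reflects at $\rho_0$. It satisfies flow invariance, $\mu\mathbf{1}_{\mathcal{H}}=0$ and $\mu(\mathcal{G}^{2,+})=0$, and is still nonzero near $\rho_0$. So your saturation step fails, and the fact that it fails is precisely why Proposition~\ref{zerohyperbolic} is a nontrivial statement rather than a corollary of Theorem~\ref{propagationtheorem}.

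What is missing is any use of the quantitative, microlocal $L^2$ smallness of the Neumann trace, $\varphi\psi(hD_{x'})\varphi\, h\partial_\nu w_2|_\Sigma = o_{L^2(\Sigma)}(1)$, coming from the elliptic analysis of Section~\ref{sectionelliptic}. That hypothesis is the input that actually kills the measure at hyperbolic points (rather than merely preventing it from charging $\Sigma$), and it does not enter your primary argument at all. Your ``alternative, more hands-on strategy'' at the end is in fact the correct one and is essentially the paper's proof: factor $P_{h,0}=(hD_y-\Lambda_h^+)(hD_y+\Lambda_h^-)+O(h^\infty)$ microlocally near $\rho_0$, form the two half-wave components $w_2^\pm$ conjugated by operators $Q_h^\pm$ whose symbols are transported along the flows $\partial_y\mp H_{\sqrt r}$, show their boundary traces are $o_{L^2}(1)$ using the Dirichlet and (crucially) the Neumann microlocal estimates, and propagate this in $y$ via the transport equation. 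You would need to develop that sketch fully; the first route cannot be repaired.
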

\begin{proof}
 Since the Melrose-Sj\"ostrand flow is transverse to $\mathcal{H}$, by localizing the symbol $b$, it suffices to prove the same estimate by replacing $b$ to $q^{\pm}$, where $q^{\pm}$ is the solutions of
$$ \partial_yq^{\pm}\mp H_{\sqrt{r}(y,x',\xi')}q^{\pm}=0,\quad q^{\pm}|_{y=0}=q_0,
$$ 
and $q_0$ is supported in a sufficiently small neighborhood of $\rho_0$. Near $\rho_0$, it follows from \cite{BL03} that we can factorize $P_{h,0}$ as $\big(hD_y-\Lambda_h^+(y,x',hD_{x'})\big)\big(hD_y+\Lambda_h^-(y,x',hD_{x'}) \big)+O_{H^{\infty}}(h^{\infty})$, and also
$ \big(hD_y-\widetilde{\Lambda}_h^+(y,x',hD_{x'})\big)\big(hD_y+\widetilde{\Lambda}_h^-(y,x',hD_{x'}) \big)+O_{H^{\infty}}(h^{\infty}),
$
 where $\Lambda_h^{\pm}$ and $\widetilde{\Lambda}_h^{\pm}$ have principal symbols $\pm\sqrt{r(y,x',\xi')}$. Denote by $Q_h^{\pm}=\mathrm{Op}_h(q^{\pm})$ and set
$$ w_2^{+}:=\varphi(y)Q_h^{+}(hD_y-\Lambda_h^{-})w_2,\quad w_2^-:=\varphi(y)Q_h^{-}(hD_y-\widetilde{\Lambda}_h^+)w_2,
$$
where the cutoff $\varphi(y)$ is supported on $0\leq y\leq \epsilon_0\ll 1$ and is equal to $1$ for $0\leq y\leq \epsilon_0/2$. From the equation of $w_2$, we have
\begin{align*}
 (hD_y-\Lambda_h^+)w_2^{+}=&\varphi(y)[hD_y-\Lambda_h^+,Q_h^+](hD_y-\Lambda_h^-)w_2-ih\varphi'(y)Q_h^+(hD_y-\Lambda_h^-)w_2+o_{L_{y,x'}^2}(h)\\
 =&-ih\varphi'(y)Q_h^+(hD_y-\Lambda_h^-)w_2+o_{L_{y,x'}^2}(h),
\end{align*}
since the principal symbol of $\frac{1}{ih}[hD_y-\Lambda_h^+,Q_h^+]$ is zero, thanks to the choice of symbols $q^{\pm}$. Multiplying by $\ov{w}_2^+$ to both sides and integrating, we have for $y\leq \epsilon_0/2$ (thus $\varphi'(y)=0$) that
\begin{align}\label{energyestimate}
h\|w_2^+(y,\cdot)\|_{L_{x'}^2}^2\leq h\|w_2^{+}(0,\cdot)\|_{L_{x'}^2}^2+o(h).
\end{align}
Since $\mathrm{Op}_h(q_0)(h\partial_yw_2)|_{y=0}=o_{L_{x'}^2}(1)$, we deduce by definition that $w_2^+(0)=o_{L_{x'}^2}(1)$. This together with \eqref{energyestimate} yields $w_2^+(y)=o_{L_{x'}^2}(1)$, uniformly for all $0\leq y\leq \epsilon_0/2$. Thus $w_2^+=o_{L_{y,x'}^2}(1)$. Similar argument for $w_2^-$ yields $w_2^-=o_{L_{y,x'}^2}(1)$. Note that $hD_y-\Lambda_h^-$ is elliptic on the support of $q^+$, we deduce that $Q_h^+w_2=o_{L_{y,x'}^2}(1)$. This means that $\mu$ is zero near the support of $q^+$, hence the proof of Proposition \ref{zerohyperbolic} is complete.
\end{proof}
Consequently, we have shown that the measure $\mu$ is invariant along the bicharacteristic flow on $\Omega _2$, it vanishes near every hyperbolic point of $\Sigma$,  and $\mu( \mathcal{G}^{2,-}) =0$.  Thus $\mu$  is supported on bicharacteristics which encounter $\Sigma$ only at points of 
$$\mathcal{G}^{2<}= \cup_{k\geq 3 }\mathcal{G}^k.$$
These bicharacteristics are consequently near $\Sigma$ integral curves of $H_p$ (because in Definition~\ref{generalbicha}, the two vector fields $H_p$ and $H_p - \frac{ H_p^2(y}{H^2 _y p } H_y$ coincide on $\mathcal{G}^{2<}$).  However, according to the geometric condition assumption, all such bicharacteristics must leave $\Omega_2$. As a consequence, $\mu$ is supported on the emptyset, and hence $\mu =0$. This gives a contradiction. The proof of \eqref{eq:resolvent} in Theorem~\ref{thm:resolvent} is now complete.

\section{Optimality of the resolvent estimate}
In this section we prove the second part of Theorem \ref{thm:resolvent}. For simplicity, we consider the model case $\Omega_2=\mathbb{D}:=\{x\in\R^2:|x|<1 \}$ and $a(x)=\mathbf{1}_{\Omega_1}$ and $\Sigma=\mathbb{S}^1$.
To prove the second part in Theorem~\ref{thm:resolvent} we need to construct functions $u_1,v_1,u_2,v_2$, such that
$$ \|(u_j,v_j)\|_{H^1\times L^2(\Omega_j)}\sim 1,\; \|(f_j,g_j)\|_{H^1\times L^2(\Omega_j)}=O(h), \quad j=1,2
$$
\begin{align*}
\begin{cases} 
& u_1=ih(f_1-v_1), \text{ in }\Omega_1\\
& h\Delta u_1+h\Delta v_1-i v_1=hg_1,\text{ in } \Omega_1 \\
& u_2=ih(f_2-v_2), \text{ in } \Omega_2\\
& h\Delta u_2-iv_2=hg_2,\text{ in }\Omega_2
\end{cases}
\end{align*}
together with the boundary condition on the interface
\begin{align*}
u_1|_{\Sigma}=u_2|_{\Sigma},\quad \partial_{\nu}u_2|_{\Sigma}=(\partial_{\nu}u_1+\partial_{\nu}v_1 )|_{\Sigma},
\end{align*}
The key point in the construction is that  in $\Omega_1$, we construct quasi-modes concentrated at the scale $|D_x|\sim \hbar^{-1}=h^{-\frac{1}{2}}$ while in $\Omega_2$, the quasi-modes are concentrated at the scale $|D_{x'}|\sim |D_y|\sim |D_x|\sim h^{-1}$ near the interface $\Sigma$, where $x'$ is the tangential variable near $\Sigma$ and $y$ is the normal variable. 
Now we describe the construction.

\noi
$\bullet$ {\bf  Step 1: Construction at the zero order:}
We first choose $u_2^{(0)}$, such that
$$ h^2\Delta u_2^{(0)}+u_2^{(0)}=0, \quad u_2^{(0)}|_{\Sigma}=0;\quad \|\nabla u_2^{(0)}\|_{L^2(\Omega_2)}\sim h^{-1}\|u_2^{(0)}\|_{L^2(\Omega_2)}\sim 1.
$$
Moreover, we require that $u_2^{(0)}$ such that they are  hyperbolically localized, in the sense that
\begin{align}\label{localizationstep1}
\|\partial_{\nu}u_2^{(0)}|_{\Sigma}\|_{H^s(\Sigma)}\sim h^{-s} ,
\quad
 \mathrm{WF}_h(\partial_{
\nu}u_2^{(0)}|_{\Sigma})\subset \mathcal{H}_{\delta}(\Sigma):=\{(x',\xi'):\delta<r_0(x',\xi')<1-\delta \}
\end{align}
for some $0<\delta<\frac{1}{2}$.
The existence of such sequence of eigenfunctions is not difficult to prove in the case of a disc or an ellipse, we postpone this fact in Lemma \ref{energynorm} of the Appendix. This will actually be the only point where in Theorem~\ref{thm:decay} we use the particular choice $\Omega_2 = \mathbb{D}$.

 Next we define
$$ v_2^{(0)}=ih^{-1}u_2^{(0)},\quad f_2^{(0)}=g_2^{(0)}=0.
$$
From \eqref{localizationstep1}, we have
\begin{equation}
 \partial_{\nu}u_2^{(0)}|_{\Sigma}= \begin{cases} O_{L^2(\Sigma)}(1)\\ O_{H^{-\frac{1}{2}}(\Sigma)}(h^{\frac{1}{2}})\\ O_{H^{\frac{1}{2}}(\Sigma)}(h^{-\frac{1}{2}}).\end{cases}
 \end{equation}
 
We remark that here we use the fact that the dimension $d\geq 2$.

Next we solve the elliptic equation with the mixed Dirichlet-Neumann data (with $\hbar=h^{\frac{1}{2}}$):
$$ (\hbar^2\Delta-i)w^{(0)}=0,\quad \partial_{\nu}w^{(0)}|_{\Sigma}=\partial_{\nu}u_2^{(0)}|_{\Sigma},\quad w^{(0)}|_{\partial\Omega_1\setminus\Sigma}=0.
$$
From Proposition \ref{pb:mixed}, there exists a unique solution $w^{(0)}$ of this system, which satisfies

\begin{equation}
 w^{(0)}=\begin{cases}
 O_{H^2(\Omega_1)}(\hbar^{-1})\\
 O_{H^1(\Omega_1)}(\hbar) \\
  O_{L^2(\Omega_1)}(\hbar^2),
 \end{cases}
  \end{equation}

and hence by interpolation 
$$ w^{(0)}=O_{H^{\frac 3 2}(\Omega_1)}(1)
$$
and by trace theorems
\begin{equation}
  w^{(0)}\mid_{\Sigma} = \begin{cases}
   O_{H^{\frac 1 2} ( \Sigma)} (\hbar)\\
    O_{H^{1} ( \Sigma)} (1)
    \end{cases}
    \end{equation}
    
 Moreover, from the information of $\mathrm{WF}_h(\partial_{\nu}u_2^{(0)}|_{\Sigma})$ and Proposition \ref{control:frontdonde}, we have 
 $$\mathrm{WF}_h(w^{(0)}|_{\Sigma})\subset \mathrm{WF}_h(\partial_{\nu}w^0|_{\Sigma})\subset \mathcal{H}_{\delta}(\Sigma).$$ Hence
$$   \| w^{(0)}\mid_{\Sigma}\|_{H^1( \Sigma)} \sim h^{- \frac 1 2}  \| w^{(0)}\mid_{\Sigma}\|_{H^{\frac 1 2}( \Sigma)} = O(1)
$$
Next we define $u_1^{(0)}, v_1^{(0)}$ such that
$$ v_1^{(0)}=ih^{-1}u_1^{(0)}, \quad w^{(0)}=u_1^{(0)}+v_1^{(0)} = (1 + i h^{-1}) u_1^{(0)};\quad f_1^{(0)}=0,\quad g_1^{(0)}=ih^{-1}u_1^{(0)}= v^{(0)}_1.
$$
This implies
\begin{equation}
u_1^{(0)}=\begin{cases}
O_{H^1(\Omega_1)}(h^{\frac{3}{2}})\\
O_{L^2(\Omega_1)}(h^{2}),
\end{cases}
\end{equation}
 and consequently $g_1^{(0)}=O_{L^2(\Omega_1)}(h)$. 

In summary, as the first step, we have constructed   quasi-modes $(u_1^{(0)},v_1^{(0)};u_2^{(0)},v_2^{(0)})$ and $(f_1^{(0)}=0,g_1^{(0)};f_2^{(0)}=0,g_2^{(0)}=0)$ such that
 \begin{equation}\label{0approximation}
\begin{cases}
& h\Delta(u_1^{(0)}+v_1^{(0)})-iv_1^{(0)}=hg_1^{(0)}, \qquad g_1 = O_{L^2( \Omega_1)}(h)\\
&u_1^{(0)}=-ihv_1^{(0)}\\
& h\Delta u_2^{(0)}-iv_2^{(0)}=0\\
& u_2^{(0)}=-ih v_2^{(0)}\\
&\partial_{\nu}u_2^{(0)}|_{\Sigma}=\big(\partial_{\nu}u_1^{(0)}+\partial_{\nu}v_1^{(0)} \big)|_{\Sigma},\\
& \big(u_2^{(0)}-u_1^{(0)}\big)|_{\Sigma}=O_{H^{\frac{1}{2}}(\Sigma)}(h^{\frac{3}{2}}),\quad \big(v_2^{(0)}-v_1^{(0)}\big)|_{\Sigma}=O_{H^{\frac{1}{2}}(\Sigma)}(h^{\frac{1}{2}}),
\end{cases}
\end{equation}
and to conclude the proof of Theorem~\ref{thm:resolvent}, it remains to eliminate the error term in the last boundary condition in~\eqref{0approximation}. An important point is that both $u^{(0)}_2$ and $u^{(0)}_1$ (and hence also $uv{(0)}_2$ and $v^{(0)}_1$) have their wave front included in $\mathcal{H}_\delta ( \Sigma)$.

\noi
$\bullet$ {\bf  Step 2: Construction at the first order:}
We now introduce correction terms to eliminate the error term in the last boundary condition of \eqref{0approximation}.
We are looking for a correction term $e_2 ^{(1)}$, 
$$ u_2^{(1)}=u_2^{(0)}+e_2^{(1)},\quad v_2^{(1)}=i h^{-1} u_2^{(1)}=v_2^{(0)}+i h^{-1} e_2^{(1)}, 
$$
while keeping all other terms identical
$$ u_1^{(1)}= u_1^{(0)}, \quad v_1^{(1)}= v_1^{(0)},
$$
First,  using the geometric optics construction (see Appendix), we construct $\widetilde{e}_2^{(1)}$, solving  near $\Sigma$, solving for $N$ large enough to be fixed later 
$$ (h^2\Delta+1)\widetilde{e}_2^{(1)}=O_{L^2}(h^N)$$
near $\Sigma$, and the boundary conditions
\begin{equation}\label{boundary-cond}
 \widetilde{e}_2^{(1)}|_{\Sigma}=(u_1^{(0)}-u_2^{(0)})|_{\Sigma}+O_{L^2(\Sigma)}(h^N), \qquad \partial_\nu \widetilde{e}^{(1)}_2 \mid _{\Sigma} = O_{L^2( \Sigma)} ( h^N).
\end{equation}
 with $h$-semiclassical wave front sets of all the functions are localized near $\mathcal{H}_\delta (\Sigma)$. 

\begin{align}\label{go}
 (h^2\Delta+1)\widetilde{e}_2^{(1)}=O_{L^2(\Omega_2)}(h^N),\quad \widetilde{e}_2^{(1)}=(u_1^{(0)}-u_2^{(0)})|_{\Sigma}+O_{H^N(\Sigma)}(h^N),\quad h\partial_{\nu}\widetilde{e}_2^{(1)}|_{\Sigma}=O_{H^N(\Sigma)}(h^N)
\end{align}
locally near $x_0\in\Sigma$. We then take a cutoff $\chi$, such that $\chi\equiv 1$  on  $\Sigma$, and with support sufficiently close to $\Sigma$ so that $\widetilde{e}$ is defined on the support of $\chi$  (i.e.  $\chi$ vanishes  along the bicharacteristics, before the formation of the  caustics). Let $e_2^{(1)}:=\chi \widetilde{e}_2^{(1)}.$ Hence
\begin{equation}
\begin{gathered}
 (h^2\Delta+1)e_2^{(1)}=[h^2\Delta,\chi]\widetilde{e}_2^{(1)}+O_{L^2(\Omega_2)}(h^4)=O_{L^2(\Omega_2)}(h^3),\\
  e_2^{(1)}|_{\Sigma}=\widetilde{e}_2^{(1)}|_{\Sigma},\quad h\partial_{\nu}e_2^{(1)}|_{\Sigma}=h\partial_{\nu}\widetilde{e}_2^{(1)}|_{\Sigma}.
  \end{gathered}
  \end{equation}
 
Again, all the functions and the errors are microlocalized near $(x_0,\xi_0)\in \mathcal{H}_\delta(\Sigma)$. Moreover, from the boundary conditions~\eqref{boundary-cond} which determine the values of the symbols $b^{\pm}$ in the geometric optics construction, we have 
$$ \|e_2^{(1)}\|_{H^1(\Omega_2)}=O(h),\quad \|e_2^{(1)}\|_{L^2(\Omega_2)}=O(h^2),\quad \partial_{\nu}e_2^{(1)}|_{\Sigma}=O_{L^2(\Sigma)}(h^{N-1}).
$$
 The geometric optics constructions in the appendix are local, but using a partition of unity of $\Sigma$, we choose a finite cutoff functions $(\chi_j)_{j=1}^M$ to replace $\chi$ and modify the function $e_2^{(1)}$ by
$$  e_2^{(1)}:=\sum_{j=1}^M \chi_j\widetilde{e}_{2,j}^{(1)},
$$
where $\widetilde{e}_{2,j}^{(1)}$ is the corresponding geometric optics near supp$(\chi_j)$.

Next we define $g_2^{(1)}=h^{-2}\cdot(h^2\Delta+1)e_2^{(1)}$.
We now have 
 \begin{equation}\label{1approximation}
\begin{cases}
& h\Delta(u_1^{(1)}+v_1^{(1)})-iv_1^{(1)}=hg_1^{(1)}, \qquad g_1 = O_{L^2( \Omega_1)}(h)\\
&u_1^{(1)}+ihv_1^{(1)}= 0\\
& h\Delta u_2^{(1)}-iv_2^{(1)}=h g_2^{(1)}, \qquad g_2^{(1)} = O_{L^2( \Omega_1)} (h)\\
& u_2^{(1)}=-ih v_2^{(1)}\\
&\partial_{\nu}u_2^{(1)}|_{\Sigma}=\big(\partial_{\nu}u_1^{(1)}+\partial_{\nu}v_1^{(1)} \big)|_{\Sigma} +O_{H^{N}(\Sigma)}(h^N)\\
& \big(u_2^{(1)}-u_1^{(1)}\big)|_{\Sigma}=O_{H^N(\Sigma)}(h^N),\quad \big(v_2^{(1)}-v_1^{(1)}\big)|_{\Sigma}=O_{H^{N}(\Sigma)}(h^{N-1}),
\end{cases}
\end{equation}
It now remains to eliminate completely the errors in the last boundary condition in~\eqref{1approximation}. For this we just use  the trace operators. Recall that if $s> \frac 3 2$, the map 
$$ \Gamma: u \in H^s( \Omega_1) \mapsto ( u \mid_\Sigma, \partial_\nu u \mid_{\Sigma}) \in H^{s- 1/2}( \Sigma)\times H^{s- 3/2}(\Sigma)
$$
is continuous surjective and admits a bounded right inverse.
As a consequence, if $N$ is large enough,  there exists $e_2^{(2)}\in H^{N- \frac 3 2}$ (supported near $\Sigma$) such that 
$$ \|e_2^{(2)} \|_{ H^{N- \frac 3 2}(\Omega_2)} = O(h^N), \quad e_2^{(2)}\mid_\Sigma = (u_1^{(1)}-u_2^{(1)}) \mid_{\Sigma}, \quad \partial_{\nu}e_2^{(2)}|_{\Sigma}=\big(\partial_{\nu}u_1^{(1)}-\partial_{\nu}v_1^{(1)}\big)\mid_{\Sigma}  - \partial_\nu u_2^{(1)} \mid_{\Sigma} 
$$

Choosing now 
$$ u_2^{(2)}=u_2^{(1)}+ e_2^{(2)}, \qquad v_2^{(2)}=v_2^{(1)}+ i h^{-1} e_2^{(2)}, \qquad  g_2^{(2)} = g_2^{(1)} +h^{-1} ( h^2 \Delta + 1) e_2^{(2)}
$$
and keeping the other terms identical
$$ u_1^{(2)}= u_1^{(0)}, \quad v_1^{(2)}= v_1^{(0)},\quad g_1^{(2)} = g_1^{(1)},
$$
we get (if $N$ is large enough)
\begin{equation}\label{2approximation}
\begin{cases}
& h\Delta(u_1^{(2)}+v_1^{(2)})-iv_1^{(2)}=hg_1^{(2)}, \qquad g_1 = O_{L^2( \Omega_1)}(h)\\
&u_1^{(2)}+ihv_1^{(2)}=0\\
& h\Delta u_2^{(2)}-iv_2^{(2)}= h g_2^{(2)}, \qquad g_2^{(2)} = O_{L^2( \Omega_1)} (h)\\
& u_2^{(2)}=-ih v_2^{(2)}\\
&\partial_{\nu}u_2^{(2)}|_{\Sigma}=\big(\partial_{\nu}u_1^{(2)}+\partial_{\nu}v_1^{(2)} \big)|_{\Sigma} \\
& \big(u_2^{(2)}-u_1^{(2)}\big)|_{\Sigma}=0,\quad \big(v_2^{(2)}-v_1^{(2)}\big)|_{\Sigma}=0
\end{cases}
\end{equation}
This ends the proof of the construction of quasi-modes in Theorem~\ref{thm:resolvent}. \qed 

\section{Appendix: Technical ingredients }

\subsection{Elliptic problem with mixed Dirichlet Neumann data}

Let $U\subset \R^d$ be a bounded domain with smooth boundary. For $F\in C^{\infty}(\ov{U})$, we denote by
$$ \gamma^0(F)=F|_{\partial U},\; \gamma^1(F)=(\partial_{\nu}F)|_{\partial U}
$$
the Dirichlet and Neumann trace, with respectively. From the trace theorem, we know that
$$ \gamma^0: H^s(U)\rightarrow H^{s-\frac{1}{2}}(U)
$$
is bounded and surjective. 
Let 
$$ \mathcal{H}^1_{0} ( \Omega_1) = \{ v \in H^1( \Omega); v\mid_{\partial\Omega_1 \setminus \Sigma} =0\},
$$

We prove the following existence result of the mixed Dirichlet-Neumann boundary value problem:
\begin{prop}\label{pb:mixed}
	For any $F \in H^{-\frac 1 2} ( \Sigma)$, the boundary value problem (note that $\partial\Omega_1=\Sigma\cup\partial\Omega $ and $\Sigma,\partial\Omega$ are separated)
	
	\begin{align}
	 (\hbar ^2 \Delta -i) w&=0, \label{syst-1}\\
	  \partial_\nu w \mid_\Sigma = F, \qquad & w \mid_{\partial \Omega_1 \setminus \Sigma} =0 \label{syst-2}
	  \end{align}
	
	 admits a unique solution $w\in \mathcal{H}^1_0(\Omega_1)$ satisfying 
	$$ \Bigl( \hbar \| \nabla_x w\|_{L^2( \Omega_1)}  +  \|  w\|_{L^2( \Omega_1)} \Bigr) \leq C\hbar \| F\|_{H^{-\frac 1 2} ( \Sigma)}.	$$ 
	Furthermore,  if $F\in H^{\frac 1 2} ( \Sigma)$, then  $w\in {H}^2(\Omega_1)$ and
	$$  \| \nabla^2_x w\|_{L^2( \Omega_1)}   \leq C\Bigl( \| F\|_{H^{\frac 1 2} ( \Sigma)} + \hbar^{-1}\| F\|_{H^{- \frac 1 2} ( \Omega_1)}\Bigr).	$$ 
	
\end{prop}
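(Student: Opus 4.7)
The plan is to pose the problem in variational form on the Hilbert space $V = \mathcal{H}^1_0(\Omega_1)$ equipped with the semiclassical norm $\|w\|_V^2 = \hbar^2 \|\nabla w\|_{L^2}^2 + \|w\|_{L^2}^2$, and apply Lax--Milgram. Multiplying \eqref{syst-1} by $\bar\varphi$ with $\varphi \in V$, integrating by parts (the boundary contribution on $\partial\Omega_1 \setminus \Sigma$ vanishes because $\varphi$ does, and on $\Sigma$ it produces the Neumann datum), one is led to look for $w \in V$ satisfying
\begin{equation*}
a(w,\varphi) := \hbar^2 \int_{\Omega_1} \nabla w \cdot \nabla \bar\varphi\, dx + i \int_{\Omega_1} w\, \bar\varphi\, dx \; =\; \hbar^2 \langle F,\,\varphi\rvert_\Sigma \rangle_{H^{-\frac12},H^{\frac12}} =: \ell(\varphi),
\end{equation*}
for every $\varphi \in V$. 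A solution of this variational problem is, by the integration by parts computation run backwards, a distributional solution of \eqref{syst-1}--\eqref{syst-2} since $\partial_\nu w\rvert_\Sigma$ is recovered from the $\Sigma$-boundary term.

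The sesquilinear form $a$ is obviously continuous on $V\times V$, and coercive in the sense $|a(w,w)| = (\hbar^4\|\nabla w\|^4 + \|w\|^4)^{1/2} \geq \tfrac{1}{\sqrt 2}\|w\|_V^2$ (equivalently, $\mathrm{Re}\,(e^{-i\pi/4}a(w,w)) = \tfrac{1}{\sqrt 2}\|w\|_V^2$), so no use of Poincar\'e inequality or of the Dirichlet portion of the boundary is needed. The linear form $\ell$ is continuous on $V$: the usual trace inequality $\|\varphi\rvert_\Sigma\|_{H^{1/2}(\Sigma)} \leq C \|\varphi\|_{H^1(\Omega_1)}$, together with the crude bound $\|\varphi\|_{H^1(\Omega_1)} \leq \hbar^{-1}\|\varphi\|_V$ (valid for $\hbar\leq 1$), yields
\begin{equation*}
|\ell(\varphi)| \;\leq\; \hbar^2 \|F\|_{H^{-\frac 1 2}(\Sigma)} \|\varphi\rvert_\Sigma\|_{H^{\frac 1 2}(\Sigma)} \;\leq\; C\hbar\, \|F\|_{H^{-\frac 1 2}(\Sigma)}\,\|\varphi\|_V.
\end{equation*}
Lax--Milgram therefore delivers a unique $w \in V$ with $\|w\|_V \leq C \hbar \|F\|_{H^{-1/2}(\Sigma)}$, which is exactly the first estimate of the proposition.

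For the higher regularity, once $F \in H^{1/2}(\Sigma)$ is assumed I invoke the standard elliptic $H^2$ estimate for the mixed Dirichlet--Neumann problem on $\Omega_1$ (here it is essential that $\Sigma$ and $\partial\Omega$ are disjoint, as the paper assumes, so no corner singularities occur):
\begin{equation*}
\|w\|_{H^2(\Omega_1)} \;\leq\; C\bigl(\|\Delta w\|_{L^2(\Omega_1)} + \|F\|_{H^{\frac 1 2}(\Sigma)} + \|w\rvert_{\partial\Omega_1 \setminus \Sigma}\|_{H^{\frac 3 2}}\bigr).
\end{equation*}
The Dirichlet trace vanishes, and the equation $(\hbar^2\Delta -i)w = 0$ gives $\|\Delta w\|_{L^2} = \hbar^{-2}\|w\|_{L^2} \leq \hbar^{-1}\|F\|_{H^{-1/2}(\Sigma)}$ by the previous step. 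Combining these yields the stated bound on $\|\nabla^2 w\|_{L^2}$.

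The one point requiring care is the $\hbar$-dependence: the trace inequality is applied without any semiclassical scaling, which is what produces the factor $\hbar^{-1}$ and hence the $\hbar\cdot\hbar^{-1} = 1$ that makes the final bound on $\|w\|_V$ proportional to $\hbar\|F\|_{H^{-1/2}}$ rather than $\|F\|_{H^{-1/2}}$ alone. I do not expect a genuine obstacle; the elliptic regularity step relies on the disjointness of $\Sigma$ and $\partial\Omega$, which is a standing assumption of the paper.
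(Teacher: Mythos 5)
Your proof is correct and follows essentially the same route as the paper's: you set up the identical variational formulation on $\mathcal{H}^1_0(\Omega_1)$, apply Lax--Milgram (you spell out the coercivity via the rotated real part, which the paper leaves implicit), obtain the semiclassical a priori bound from the continuity constant of the right-hand side, and then invoke standard elliptic $H^2$-regularity together with $\Delta w = i\hbar^{-2}w$ to get the second estimate, just as the paper does.
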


\begin{proof}
We just sketch the proof which is a variation around very classical ideas.  Multiplying~\eqref{syst-1} by $\overline{\varphi}$ vanishing on $\partial\Omega_1 \setminus \Sigma$ and integrating by parts using Greens formula, we get 
$$ 0 = \int_{\Omega_1 } (\hbar ^2 \Delta -i) w\overline{\varphi} (x) dx =  \int_{\Omega_1 } -\hbar ^2  \nabla_x w \nabla_x \overline{\varphi} -iw \overline{\varphi} (x) dx + \int_\Sigma \hbar^2 \partial_\nu w \overline{\varphi} (x) d \sigma
$$ 
As a consequence, if 
 the function $w$ satisifes~\eqref{syst-1} ~\eqref{syst-2} if an only if 
\begin{equation}\label{Lax-Mil}
\forall v \in \mathcal{H}^1_{0} ( \Omega_1), \qquad Q(w, v ) :=  \int_{\Omega_1 } \hbar ^2  \nabla_x w \nabla_x \overline{v}+  iw \overline{v} (x) dx = T_F (v) := \int_{\Sigma} \hbar^2 F \overline{v} (x) d \sigma.
\end{equation}

From the trace theorem, the map 
$$ v \in \mathcal{H}^1_{0}( \Omega_1) \mapsto v \mid_{\Sigma} \in H^{\frac 1 2} ( \Sigma)$$
is continuous and hence for any $F\in H^{- \frac 1 2 } ( \Sigma)$, the map 
$$ v \mapsto T_F(v) \in \mathbb{C}$$ is a continuous antilinear form on $\mathcal{H}^1_{0}(\Omega_1).$

The existence of a unique solution to~\eqref{Lax-Mil} (and consequently the solution to~\eqref{syst-1}, ~\eqref{syst-2}) now follows from Lax-Milgram Theorem. 
Applying~\eqref{Lax-Mil} to $v = w$, we get 
$$ \| h \nabla_x w\|_{L^2( \Omega_1)} ^2 +\|  w\|_{L^2( \Omega_1)} ^2  \leq 2 |T_F(w)| \leq C \hbar^2 \| F\|_{H^{- \frac 1 2}( \Sigma)} \| w\|_{H^1(\Omega_1)},$$
which implies 
$$ \| w\| _{H^1( \Omega_1)} \leq C \| F\|_{H^{- \frac 1 2} ( \Sigma)}, $$ 
and using again~\eqref{Lax-Mil} with $v=w$, 
$$ \| w\| ^2_{L^2( \Omega_1)} \leq  \hbar^2 \| \nabla_x w\| _{L^2( \Omega_1)}  +\hbar^2 | T_F (w)| \leq C \hbar^2  \| F\|^2_{H^{- \frac 1 2} ( \Sigma)}.
$$ 
This proves the first part in Proposition~\ref{pb:mixed}. The proof of the second part is standard elliptic regularity results. Indeed,
we have 
$$ \Delta w = i \hbar^{-2} w, \qquad  \partial_\nu w \mid_\Sigma = F \in H^{\frac 1 2} ( \Sigma), \qquad  w \mid_{\partial \Omega_1 \setminus \Sigma} =0,$$
and we deduce by standard elliptic regularity results,
$$ \| w \|_{H^2( \Omega_1)} \leq C \Bigl(\hbar^{-2} \| w\|_{L^2( \Omega_1)} + \| F\|_{H^{\frac 1 2 } ( \Sigma)}\Bigr) \leq C \Bigl( \hbar^{-1} \| F\|_{H^{- \frac 1 2} ( \Sigma)} + \| F\|_{H^{\frac 1 2 } ( \Sigma)}\Bigr)
$$

This completes the proof of Proposition \ref{pb:mixed}.
\end{proof}

\subsection{Estimates for some operators}
\begin{lem}\label{technical1}
If $b(x,\xi)\in S^{-m}$ ($m\geq 0$) is compactly supported in $x\in\R^n$, then for any $s\in\R$, 
	$$\mathrm{Op}_h(b)=\mathcal{O}(h^{-\theta}): H^{s}(\R^n)\rightarrow H^{s+\theta}(\R^n),\quad \forall \theta\in[0,m].
	$$
\end{lem}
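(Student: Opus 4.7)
The strategy is to reduce the semiclassical statement to a classical one via rescaling. Setting $\tilde b(x,\eta):=b(x,h\eta)$ and performing the change of variables $\xi=h\eta$ in the integral defining the semiclassical quantization gives
$$ \mathrm{Op}_h(b)u(x)=\frac{1}{(2\pi)^n}\iint e^{i(x-y)\cdot\eta}\tilde b(x,\eta)u(y)\,dy\,d\eta=\tilde b(x,D)u(x),$$
where $\tilde b(x,D)$ is the standard (non-semiclassical) Kohn--Nirenberg quantization. The lemma will follow once one shows that $\tilde b\in h^{-\theta}S^{-\theta}_{1,0}$ uniformly in $h\in(0,1]$, because the classical continuity theorem for $S^{-\theta}_{1,0}$ symbols then gives $\tilde b(x,D):H^s\to H^{s+\theta}$ with norm bounded by finitely many $S^{-\theta}_{1,0}$ seminorms of $\tilde b$, hence by $Ch^{-\theta}$.

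The core computation is the verification of these symbol bounds. For any multi-indices $\alpha,\beta$ one has
$$ |\partial^\alpha_x\partial^\beta_\eta\tilde b(x,\eta)|=h^{|\beta|}\bigl|(\partial^\alpha_x\partial^\beta_\xi b)(x,h\eta)\bigr|\leq C_{\alpha,\beta}\,h^{|\beta|}\langle h\eta\rangle^{-m-|\beta|},$$
using only the $S^{-m}$ seminorms of $b$; compact support in $x$ is used precisely to absorb the $x$-derivatives without any polynomial growth. I would then split into two frequency regimes. For $|\eta|\leq h^{-1}$ one has $\langle h\eta\rangle\sim 1$ and $h\langle\eta\rangle\leq C$, so
$$ h^{|\beta|}\langle h\eta\rangle^{-m-|\beta|}\leq C\,h^{|\beta|}=C\,h^{-\theta}(h\langle\eta\rangle)^{\theta+|\beta|}\langle\eta\rangle^{-\theta-|\beta|}\leq C\,h^{-\theta}\langle\eta\rangle^{-\theta-|\beta|}.$$
For $|\eta|\geq h^{-1}$ one has $\langle h\eta\rangle\sim h\langle\eta\rangle$ and $h\langle\eta\rangle\geq 1$, so using $\theta\leq m$,
$$ h^{|\beta|}\langle h\eta\rangle^{-m-|\beta|}\sim h^{-m}\langle\eta\rangle^{-m-|\beta|}=h^{-\theta}(h\langle\eta\rangle)^{\theta-m}\langle\eta\rangle^{-\theta-|\beta|}\leq C\,h^{-\theta}\langle\eta\rangle^{-\theta-|\beta|}.$$
Combining the two regimes yields $\tilde b\in h^{-\theta}S^{-\theta}_{1,0}$ uniformly in $h$.

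The only remaining step is to invoke classical pseudodifferential $L^2$-boundedness (Calder\'on--Vaillancourt, plus composition with $\langle D\rangle^s$ to reach general Sobolev indices), which delivers the claimed $O(h^{-\theta})$ bound. No genuine obstacle is expected here; the mildly delicate point is only the case analysis in the frequency split above. An alternative, essentially equivalent route is to prove directly the two endpoint cases $\theta=0$ (which is uniform $L^2$-boundedness, since $b\in S^0$ already gives $\tilde b\in S^0$ uniformly in $h$) and $\theta=m$ (the computation above specialized to $\theta=m$), and then obtain the intermediate values of $\theta$ by complex interpolation between $H^s\to H^s$ and $H^s\to H^{s+m}$.
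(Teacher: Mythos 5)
Your proof is correct, but it follows a genuinely different route from the paper's. The paper works directly on the Fourier side: it writes the Schwartz kernel of $\langle D\rangle^{s}\mathrm{Op}_h(b)\langle D\rangle^{-s}$ (resp.\ $\langle D\rangle^{s+m}\mathrm{Op}_h(b)\langle D\rangle^{-s}$) in terms of the partial Fourier transform $\widehat b(\zeta,\eta)$, verifies the Schur conditions using rapid decay in $\zeta$ together with the key inequality $\langle h\eta\rangle^{-m}\le h^{-m}\langle\eta\rangle^{-m}$, and then gets the intermediate $\theta\in(0,m)$ by interpolation between the endpoints $\theta=0$ and $\theta=m$. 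You instead rescale $\mathrm{Op}_h(b)=\tilde b(x,D)$, prove the uniform symbol inclusion $\tilde b\in h^{-\theta}S^{-\theta}_{1,0}$ for every $\theta\in[0,m]$ by a two-regime frequency split, and then simply quote the classical $S^{-\theta}_{1,0}$ Sobolev continuity theorem (Calder\'on--Vaillancourt together with composition with $\langle D\rangle^s$). Your route is arguably cleaner in that it avoids interpolation entirely and reduces to a black-box classical result, at the cost of invoking that result; the paper's route is more self-contained (only Schur's test and two elementary convolution estimates) and isolates explicitly where the compact support in $x$ is used, namely to make $\widehat b(\zeta,\eta)$ absolutely integrable in $\zeta$ with rapid decay, which is what drives the Schur bound. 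Both arguments are sound; your frequency-split estimates are verified correctly, and the remark about the endpoint-plus-interpolation alternative is precisely what the paper does.
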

\begin{proof}
First we show that $\mathrm{Op}_h(b)$ is bounded from $H^s$ to $H^s$. It is equivalent to show that the operator $T_h:=\langle D_x\rangle^s\mathrm{Op}_h(b)\langle D_x\rangle^{-s}$ is bounded (independent of $h$) from $L^2$ to $L^2$. By definition, we have
\begin{align*}
\widehat{(T_hf)}(\xi)=\frac{1}{(2\pi)^d}\int_{\R^n}\langle\xi\rangle^s\widehat{b}(\xi-\eta,h\eta)\langle\eta\rangle^{-s}\widehat{f}(\eta)d\eta,
\end{align*}
where $\widehat{b}(\zeta,\eta)=(\mathcal{F}_{x\rightarrow\zeta}a)(\zeta,\eta)$ is a well-defined function. Thus $\widehat{T_hf}$ can be viewed as an operator acting on $\widehat{f}\in L^2(\R_{\xi}^d)$ with Schwartz kernel 
$$ K_h(\xi,\eta):=\frac{1}{(2\pi)^d}\langle\xi\rangle^s\langle\eta\rangle^{-s}\widehat{b}(\xi-\eta,h\eta).
$$ 
By Schur's test, to check the boundedness of this operator, it suffices to check that 
$$ \sup_{\xi,h}\int_{\R^n}|K_h(\xi,\eta)|d\eta<\infty,\quad \sup_{\eta,h}\int_{\R^n}|K_h(\xi,\eta)|d\xi<\infty.
$$ 
Since $K_h(\xi,\eta)$ is rapidly decaying in $\langle\xi-\eta\rangle$, these conditions can be simply verified by the elementary convolution inequalities:
\begin{align}\label{convolution1}
 \int_{\R^n} \frac{1}{\langle\eta\rangle^s\langle\xi-\eta\rangle^M}d\eta\leq C_M\langle\xi\rangle^{-s},\quad \forall M>d , s\geq 0,
\end{align}
and
\begin{align}\label{convolution2} \int_{\R^n}\frac{\langle\eta\rangle^{\sigma}}{\langle\xi-\eta\rangle^M}d\eta \leq C_{M,\sigma}\langle\xi\rangle^{\sigma},\quad \forall M>d+\sigma,\sigma\geq 0.
\end{align}
By interpolation, to finish the proof, it suffices to estimate the operator bound of $\mathrm{Op}_h(b)$ from  $H^s$ to $H^{s+m}$. Similarly, we need to check that the kernel
$$ G_h(\xi,\eta)=h^{m}\langle\xi\rangle^{s+m}\widehat{b}(\xi-\eta,h\eta)\langle\eta\rangle^{-s}
$$
satisfies the conditions for Schur's test.
First note that for any $\alpha\in\N^n$,
$$ (i(\xi-\eta))^{\alpha}\widehat{b}(\xi-\eta,\eta)=\frac{1}{(2\pi)^d}\int_{\R^n}(\partial_x^{\alpha}b)(x,\eta)e^{-ix\cdot(\xi-\eta)}dx,
$$
thus $\widehat{b}(\xi-\eta,h\eta)=O\big(\langle\xi-\eta\rangle^{-M}\langle h\eta\rangle^{-m}\big)$ for any $M\in\N$. Note that
$$\langle hm\rangle^{-m}\sim (1+h|\eta|)^{-m}\leq h^{-m}\langle\eta\rangle^{-m}.$$
This implies that
$$ |G_h(\xi,\eta)|\leq C_M \langle\xi\rangle^{s+m}\langle\eta\rangle^{-(s+m)}\langle\xi-\eta\rangle^{-M}.
$$
Now the boundeness of the  integration $\int G_h(\xi,\eta)d\eta$ or $\int G_h(\xi,\eta)d\xi$ follows from  the same convolution inequalities \eqref{convolution1} and \eqref{convolution2}. This completes the proof of Lemma \ref{technical1}.
\end{proof}

\begin{lem}\label{symbolic}
Let $a\in S^0(\R^{2n}), b\in S^0(\R^{2n})$ be two symbols with compact support in the $x$ variable. Then for any $N\in\N, N\geq 2n$, 
\begin{align*}
 &\Big\|\mathrm{Op}(a)\mathrm{Op}(b)-\sum_{|\alpha|\leq N}\frac{1}{i^{|\alpha|}\alpha !}\mathrm{Op}\big(\partial_{\xi}^{\alpha}a\partial_x^{\alpha}b \big) \Big\|_{\mathcal{L}(H^s\rightarrow H^s)}\\ \leq &C_{N}\sum_{|\beta|\leq K(n)}\sup_{|\alpha|=N+1}\sup_{(x,\xi)\in\R^{2n}}\iint_{\R^{2n}}\big|\partial_{x,\xi}^{\beta}\partial_z^{\alpha}\partial_{\zeta}^{\alpha}A(x,z,\xi,\zeta)\big|dz d\zeta, 
\end{align*} 
where
$$ A(x,x,\xi,\zeta)=a(x,\xi+\zeta)b(x+z,\xi).
$$
\end{lem}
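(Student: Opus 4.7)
The plan is to derive the desired expansion from the standard composition formula for pseudo-differential operators and then control the remainder by an oscillatory-integral estimate that brings out exactly the seminorms appearing on the right-hand side. First I would write $\mathrm{Op}(a)\mathrm{Op}(b) = \mathrm{Op}(c)$, where a change of variables $\zeta = \eta - \xi$, $z = y - x$ in the double Fourier representation of the composition gives
\begin{equation*}
c(x,\xi) = \frac{1}{(2\pi)^n} \iint e^{-iz\zeta} A(x,z,\xi,\zeta)\, dz\, d\zeta, \qquad A(x,z,\xi,\zeta) = a(x,\xi+\zeta)\, b(x+z,\xi),
\end{equation*}
understood as an oscillatory integral. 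Next I would Taylor-expand $A$ in $\zeta$ at $\zeta = 0$ up to order $N$,
\begin{equation*}
A(x,z,\xi,\zeta) = \sum_{|\alpha|\leq N} \frac{\zeta^\alpha}{\alpha!}(\partial_\zeta^\alpha A)(x,z,\xi,0) + (N{+}1)\!\!\sum_{|\alpha|=N+1}\!\!\frac{\zeta^\alpha}{\alpha!}\int_0^1\!(1-t)^N (\partial_\zeta^\alpha A)(x,z,\xi,t\zeta)\, dt.
\end{equation*}

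The polynomial terms are then identified with those in the statement. Since $(\partial_\zeta^\alpha A)(x,z,\xi,0) = (\partial_\xi^\alpha a)(x,\xi)\, b(x+z,\xi)$ and $\zeta^\alpha e^{-iz\zeta} = i^{|\alpha|}\partial_z^\alpha e^{-iz\zeta}$, an integration by parts in $z$ together with the Fourier identity $\iint e^{-iz\zeta}\, dz\, d\zeta = (2\pi)^n\delta_{z=0}$ collapses each term to $(-i)^{|\alpha|}(\alpha!)^{-1}(\partial_\xi^\alpha a)(x,\xi)(\partial_x^\alpha b)(x,\xi)$, which is exactly the summand $\frac{1}{i^{|\alpha|}\alpha!}\partial_\xi^\alpha a\,\partial_x^\alpha b$ in the statement.

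For the remainder, the same integration by parts in $z$ (now applied to $\zeta^\alpha$ with $|\alpha|=N+1$) lands $\partial_z^\alpha$ on $(\partial_\zeta^\alpha A)(x,z,\xi,t\zeta)$, producing a remainder symbol
\begin{equation*}
r_N(x,\xi) = \sum_{|\alpha|=N+1}\frac{(N+1)(-i)^{|\alpha|}}{\alpha!}\int_0^1(1-t)^N\left[\frac{1}{(2\pi)^n}\iint e^{-iz\zeta}(\partial_z^\alpha\partial_\zeta^\alpha A)(x,z,\xi,t\zeta)\, dz\, d\zeta\right]dt,
\end{equation*}
so that $\mathrm{Op}(a)\mathrm{Op}(b) - \sum_{|\alpha|\leq N}(i^{|\alpha|}\alpha!)^{-1}\mathrm{Op}(\partial_\xi^\alpha a\,\partial_x^\alpha b) = \mathrm{Op}(r_N)$. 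To conclude I would invoke a Calder\'on--Vaillancourt type criterion (or Schur's test, in the spirit of Lemma~\ref{technical1}): differentiating $r_N$ in $(x,\xi)$ at most $K(n)$ times and making the oscillatory integral absolutely convergent by additional integrations by parts against $\langle z\rangle^{-2M}(1-\Delta_\zeta)^M$ and $\langle\zeta\rangle^{-2M}(1-\Delta_z)^M$ yields a pointwise bound of exactly the form $\sum_{|\beta|\leq K(n)}\iint|\partial_{x,\xi}^\beta\partial_z^\alpha\partial_\zeta^\alpha A|\, dz\, d\zeta$, and hence an $H^s\to H^s$ bound of the same order.

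The main technical point is that last step: $A$ is not a priori integrable in $(z,\zeta)$, so one has to justify the $z$-integrations by parts in the oscillatory-integral sense and keep careful track of the constant $K(n)$ of $(x,\xi)$-derivatives needed for the final $L^2\to L^2$ bound. The compact support of $a$ and $b$ in $x$ gives the spatial localisation for free; the decay in $(z,\zeta)$ required to apply Schur comes entirely from the hypothesised $L^1_{z,\zeta}$ control of the derivatives $\partial_z^\alpha\partial_\zeta^\alpha A$ with $|\alpha|=N+1$, which is exactly why that specific combination of derivatives appears on the right-hand side of the lemma.
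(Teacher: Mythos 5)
Your overall plan — expand the composition symbol around $(z,\zeta)=(0,0)$, identify the polynomial terms via the $\zeta^\alpha\,e^{-iz\zeta}=i^{|\alpha|}\partial_z^\alpha e^{-iz\zeta}$ trick and the Fourier inversion, then bound the remainder by Calder\'on--Vaillancourt/Schur — is the right one and matches the paper's strategy. However, there is a genuine gap in the remainder estimate, and it comes precisely from your choice to Taylor-expand in $\zeta$ only rather than in $(z,\zeta)$ jointly.

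After your integration by parts, the remainder symbol reads
\begin{equation*}
r_N(x,\xi) = C_{N,\alpha}\sum_{|\alpha|=N+1}\int_0^1(1-t)^N\,dt\iint e^{-iz\zeta}\,(\partial_z^\alpha\partial_\zeta^\alpha A)(x,z,\xi,t\zeta)\,dz\,d\zeta .
\end{equation*}
The dilation $t\zeta$ in the last argument is the problem. Bounding the inner integral naively by the $L^1_{z,\zeta}$ norm of the integrand gives $t^{-n}\iint|\partial_z^\alpha\partial_\zeta^\alpha A|\,dz\,d\zeta$, and $\int_0^1(1-t)^N t^{-n}\,dt$ diverges. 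The ``additional integrations by parts against $\langle z\rangle^{-2M}(1-\Delta_\zeta)^M$ and $\langle\zeta\rangle^{-2M}(1-\Delta_z)^M$'' you propose do not resolve this: they introduce extra $z$- and $\zeta$-derivatives of $A$, which are \emph{not} among the quantities $\partial_{x,\xi}^\beta\partial_z^\alpha\partial_\zeta^\alpha A$ ($|\alpha|=N{+}1$, $|\beta|\leq K(n)$) appearing on the right-hand side of the lemma. So you cannot close the estimate in the stated seminorms. A telling symptom is that your argument never uses the hypothesis $N\geq 2n$.

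The paper's proof avoids this by Taylor-expanding in the \emph{pair} $(z,\zeta)$ about $(0,0)$. The Lagrange remainder then lives at $(x,tz,\xi,t\zeta)$; the derivatives produced by the $z^{\alpha_1}\zeta^{\alpha_2}\,e^{-iz\zeta}=i^{N+1}\partial_z^{\alpha_2}\partial_\zeta^{\alpha_1}e^{-iz\zeta}$ trick combine with the Taylor derivatives to give $(\partial_z^\alpha\partial_\zeta^\alpha A)(x,tz,\xi,t\zeta)$, $|\alpha|=N{+}1$, with an explicit factor $t^{N+1}$. After the change of variables $(z,\zeta)\mapsto(z/t,\zeta/t)$ one lands on
\begin{equation*}
r_N(x,\xi)=\sum_{|\alpha|=N+1}\frac{i^{N+1}}{N!}\int_0^1(1-t)^N t^{\,N+1-2n}\,dt\iint (\partial_z^\alpha\partial_\zeta^\alpha A)(x,z,\xi,\zeta)\,e^{-i t^{-2}z\cdot\zeta}\,dz\,d\zeta ,
\end{equation*}
where the dangerous dilation has been traded for the harmless rapid oscillation $e^{-it^{-2}z\cdot\zeta}$ (of modulus one) and the $t$-weight $t^{N+1-2n}$, which is integrable precisely because $N\geq 2n$. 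Differentiating in $(x,\xi)$ then bounds the symbol, and hence the operator norm by Calder\'on--Vaillancourt, directly by the seminorms in the lemma. I would encourage you to redo the Taylor expansion jointly in $(z,\zeta)$; the identification of the polynomial part and the rest of your argument then go through as you wrote them.
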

\begin{proof}
The symbol of the operator $$\mathrm{Op}(a)\mathrm{Op}(b)-\sum_{|\alpha|\leq N}\frac{1}{i^{|\alpha|}\alpha !}\mathrm{Op}\big(\partial_{\xi}^{\alpha}a\partial_x^{\alpha}b \big)$$
 is given by
$$ r_N(x,\xi):=\frac{1}{N!}\iint _{\R^{2n}}\int_0^1(1-t)^N\sum_{|\alpha_1|+|\alpha_2|=N+1}(\partial_y^{\alpha_1}\partial_{\eta}^{\alpha_2}A)(x,tz,\xi,t\zeta)z^{\alpha_1}\zeta^{\alpha_2}\mathrm{e}^{-iz\cdot\zeta}dzd\zeta dt,
$$
with
$$ A(x,z,\xi,\zeta)=a(x,\xi+\zeta)b(x+z,\xi).
$$
Using the identity
$$ z^{\alpha_1}\zeta^{\alpha_2}\mathrm{e}^{-iz\cdot\zeta}=i^{N+1}\partial_{z}^{\alpha_2}\partial_{\zeta}^{\alpha_1}(\mathrm{e}^{-iz\cdot\zeta})
$$ 
and doing the integration by part, we have
\begin{align*} r_N(x,\xi)=&\sum_{|\alpha|=N+1}\frac{i^{N+1}}{N!}\int_0^1(1-t)^Nt^{N+1}dt\iint_{\R^{2n}}(\partial_z^{\alpha}\partial_{\zeta}^{\alpha}A)(x,tz,\xi,t\zeta)\mathrm{e}^{-iz\cdot\zeta}dzd\zeta\\
=&\sum_{|\alpha|=N+1}\frac{i^{N+1}}{N!}\int_0^1(1-t)^Nt^{N+1-2n}dt\iint_{\R^{2n}}(\partial_z^{\alpha}\partial_{\zeta}^{\alpha}A)(x,z,\xi,\zeta)\mathrm{e}^{-it^{-2}z\cdot\zeta}dzd\zeta
\end{align*}
Hence the integral converges absolutely.
Viewing $r_N(x,\xi)$ as a symbol of order $0$, we obtain the desired bound, thanks to the Caldr\'on-Vaillancourt theorem. 
\end{proof}

\subsection{Special sequence of eigenfunctions of a disc}
First we recall that
$$ J_m(z)=\Big(\frac{z}{2}\Big)^m\sum_{k=0}^{\infty}\frac{(-1)^k\big(\frac{z}{2}\big)^{2k}}{k!(m+k)!}
$$
are the Bessel functions satisfying the Bessel differential equation:
$$ z^2J_m''(z)+zJ_m'(z)+(z^2-m^2)J_m(z)=0.
$$
By definition, one has
\begin{equation}\label{recurrence}
J_{m+1}(z)+J_{m-1}(z)=\frac{2m}{z}J_m(z),\quad J_{m-1}(z)-J_{m+1}(z)=2J_m'(z).
\end{equation}

Denote by $\lambda_{m,n}$ the $n$-th zero of $J_m(z)$. It is well known that
$$ \lambda_{m,1}<\lambda_{m,2}<\cdots< \lambda_{m,n}<\cdots
$$
and the functions 
$$ \varphi_{m,n}(r,\theta)=J_m(\lambda_{m,n}r)\mathrm{e}^{im\theta}
$$
form an orthogonal sequence of eigenfunctions of $\Delta_{\mathbb{D}}$, associated with eigenvalues $\{\lambda_{m,n}^2:m\in\Z, n\in\N \}$. We will chose a special sequence
$$ J_{\alpha n}(\lambda_{\alpha n,n}r)\mathrm{e}^{i\alpha n\theta}
$$
for some $\alpha\in\N$, to be fixed later. Let us recall some facts about the zeros of Bessel functions:
\begin{prop}[\cite{E84}]\label{zeros}
There exists a continuous function $\iota:[-1,\infty)$, such that $$\lambda_{\alpha n,n}<n\iota(\alpha),\text{and } \lim_{n\rightarrow\infty}\frac{\lambda_{\alpha n,n}}{n}=\iota(\alpha).
$$
Moreover, there exists $0<\beta_1<\beta_2$, such that for all $\alpha\geq 1$,
$$ 1+\beta_1\alpha^{-\frac{2}{3}}<\frac{\iota(\alpha)}{\alpha}\leq 1+\beta_2\alpha^{-\frac{2}{3}}.
$$
\end{prop}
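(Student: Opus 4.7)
The plan is to obtain Proposition~\ref{zeros} from the classical Debye (uniform WKB) asymptotic for Bessel functions of large order. The starting point is the Liouville–Green quantization condition for the zeros of $J_\nu$ in the oscillatory regime: there is an asymptotic identity of the form
\begin{equation*}
\int_{\nu}^{\lambda_{\nu,n}}\sqrt{1-\nu^2/t^2}\,dt \;=\; \bigl(n-\tfrac14\bigr)\pi + O\!\bigl(\nu^{-1}\bigr),
\end{equation*}
valid uniformly for $\lambda_{\nu,n}\geq \nu(1+c)$ for any fixed $c>0$. This follows either from Olver's uniform asymptotic expansion of $J_\nu$ in terms of Airy functions and the known spacing of Airy zeros, or more directly from the phase integral of the WKB solutions of Bessel's equation. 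I would cite Olver's \emph{Asymptotics and Special Functions} for the rigorous uniform statement and move on.

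Next I would set $\nu=\alpha n$, $\lambda_{\alpha n,n}=\beta_n n$, and divide the quantization identity by $n$. The change of variables $t=ns$ gives
\begin{equation*}
\int_{\alpha}^{\beta_n}\sqrt{1-\alpha^2/s^2}\,ds \;=\; \pi - \tfrac{\pi}{4n} + O(n^{-2}),
\end{equation*}
so as $n\to\infty$ the sequence $\beta_n$ converges to the unique $\beta=\iota(\alpha)>\alpha$ solving the implicit relation $F(\alpha,\beta):=\int_{\alpha}^{\beta}\sqrt{1-\alpha^2/s^2}\,ds=\pi$. Since $\partial_\beta F>0$ on the relevant region, the implicit function theorem gives that $\iota$ is continuous (in fact $C^\infty$) on $[-1,\infty)$ (extended by continuity at $\alpha=0$, where $\iota(0)$ is just the solution of $\beta=\pi$, consistent with the classical $\lambda_{0,n}\sim n\pi$). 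The strict inequality $\lambda_{\alpha n,n}<n\iota(\alpha)$ is obtained by keeping track of the sign of the $-\pi/(4n)$ correction together with a monotonicity argument: the left-hand side of the quantization identity is strictly increasing in the upper endpoint, and the correction term forces $\beta_n<\iota(\alpha)$ for all $n$ large; the finitely many small $n$ can be handled by separate monotonicity of $\lambda_{\nu,n}$ in $\nu$ (Sturm comparison).

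For the asymptotic $\iota(\alpha)/\alpha$ as $\alpha\to\infty$, substitute $s=\alpha/\sin\theta$ in $F(\alpha,\iota(\alpha))=\pi$. The integral evaluates explicitly and the relation becomes
\begin{equation*}
\cot\theta_0+\theta_0-\tfrac{\pi}{2} \;=\; \tfrac{\pi}{\alpha},\qquad \sin\theta_0=\alpha/\iota(\alpha).
\end{equation*}
Writing $\theta_0=\pi/2-\varepsilon$ with $\varepsilon\to 0^+$ as $\alpha\to\infty$, Taylor expansion gives $\tan\varepsilon-\varepsilon=\varepsilon^3/3+O(\varepsilon^5)$, so $\varepsilon^3/3=\pi/\alpha+O(\varepsilon^5)$, whence $\varepsilon=(3\pi/\alpha)^{1/3}(1+o(1))$. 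Since $\iota(\alpha)/\alpha=1/\cos\varepsilon=1+\varepsilon^2/2+O(\varepsilon^4)$, one finds $\iota(\alpha)/\alpha-1=\tfrac12(3\pi/\alpha)^{2/3}(1+o(1))$, which gives the required two-sided bound with $\beta_1,\beta_2$ any constants bracketing $\tfrac12(3\pi)^{2/3}$ for $\alpha$ above a threshold; a finite number of small $\alpha\geq 1$ is handled by compactness and continuity of $\iota/\alpha-1$ bounded away from $0$ and from $\infty$ on bounded intervals.

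The main obstacle is justifying the uniformity of the Debye/Olver remainder in the coupled limit $\nu=\alpha n\to\infty$, $n\to\infty$ at the interior (non-transition) regime $\lambda>\nu$, and upgrading the resulting convergence into the \emph{strict} inequality $\lambda_{\alpha n,n}<n\iota(\alpha)$ valid for \emph{all} $n$. The transition regime $\lambda\approx \nu$ (where Airy asymptotics are needed) does not affect us because $\iota(\alpha)>\alpha$, but care is needed to exclude small $n$, for which one falls back on the monotonicity of $\nu\mapsto \lambda_{\nu,n}$. In practice I would simply quote Elbert's paper~\cite{E84} (or the equivalent Olver estimates) for both the convergence and the sharp $\alpha^{-2/3}$ bounds, rather than reproduce the uniform asymptotics in full.
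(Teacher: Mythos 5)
The paper offers no proof of this proposition at all: it is stated as a black-box citation to Elbert~\cite{E84} and never revisited. Your proposal therefore does strictly more than the paper, since you reconstruct where the result comes from via a Debye/Olver WKB quantization condition and a phase-integral computation, and you also verify the explicit constant $\tfrac12(3\pi)^{2/3}\approx 2.23$ for the leading coefficient of $\iota(\alpha)/\alpha-1$, which is a useful sanity check (it is larger than $|a_1|2^{-1/3}\approx 1.86$ governing $j_{\nu,1}-\nu$, so the strict inequality is at least consistent at $n=1$). You end, as the paper does, by delegating the rigorous uniform-asymptotics bookkeeping to~\cite{E84}, so in substance the two are the same.

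The one genuine soft spot in your sketch is the claim that $\lambda_{\alpha n,n}<n\,\iota(\alpha)$ holds for \emph{all} $n$, not merely for $n$ large. Your WKB estimate gives $\int_\alpha^{\beta_n}\sqrt{1-\alpha^2/s^2}\,ds=\pi-\tfrac{\pi}{4n}+O(n^{-2})$, so the sign of the $-\pi/(4n)$ term only forces $\beta_n<\iota(\alpha)$ once $n$ exceeds a threshold depending on the (unspecified) constant in the $O(n^{-2})$ remainder; the finitely many small $n$ are not covered. The patch you suggest, ``monotonicity of $\lambda_{\nu,n}$ in $\nu$ (Sturm comparison),'' does not obviously close this gap: $\nu\mapsto j_{\nu,n}$ being increasing in $\nu$ says nothing directly about the diagonal sequence $n\mapsto j_{\alpha n,n}/n$. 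What one actually needs (and what Elbert proves, via a differential inequality for $j_{\nu,\kappa}$ as a function of a continuous rank $\kappa$) is a monotonicity statement along the diagonal $\nu=\alpha n$ itself, showing that $j_{\alpha n,n}/n$ increases to its limit $\iota(\alpha)$. Since the paper too punts entirely to~\cite{E84} for this, the right move is simply to state that the strict inequality for all $n$ is exactly the content of Elbert's monotonicity result, rather than to suggest an alternative (and incomplete) Sturm-type argument.
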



Thanks to this proposition, we have:
\begin{lem}\label{energynorm}
Fix $\alpha\in\N$, large enough and let
$$ w_n:=\frac{\varphi_{\alpha n,n}}{\lambda_{\alpha n,n}\|\varphi_{\alpha n,n}\|_{L^2(\mathbb{D})}}.
$$
Then we have
$$ \|(\partial_{\nu}w_n)|_{\partial\mathbb{D}}\|_{L^2(\partial\D)}=O(1),\quad  \mathrm{WF}_h(\partial_{\nu}w_h|_{\Sigma})\subset\mathcal{H}_{\delta}(\partial\mathbb{D}):=\{\delta<r_0<1-\delta \}
$$ 
where $h=(h_n)_{n\in\N}, h_n=\lambda_{\alpha n,n}^{-1}\sim (\iota(\alpha)n)^{-1}$ and the semiclassical wave-front set is taken for the sequence $(w_n)_{n\in\N}$, with a little abuse of the notation.
\end{lem}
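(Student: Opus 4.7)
The plan is to compute $(\partial_\nu w_n)|_{\partial \mathbb{D}}$ in closed form, recognize it as a pure Fourier mode in the angular variable $\theta$, and then read off its semi-classical wave-front set from the asymptotics of Bessel zeros in Proposition~\ref{zeros}.

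First, I would obtain an explicit formula for the trace. Since $\partial_\nu = \partial_r$ at $r=1$, differentiation gives $\partial_r \varphi_{\alpha n, n}|_{r=1} = \lambda_{\alpha n, n} J_{\alpha n}'(\lambda_{\alpha n, n}) e^{i \alpha n \theta}$. The recurrences~\eqref{recurrence} together with $J_{\alpha n}(\lambda_{\alpha n, n}) = 0$ give $J_{\alpha n}'(\lambda_{\alpha n, n}) = J_{\alpha n - 1}(\lambda_{\alpha n, n})$, which is nonzero (zeros of consecutive Bessel functions interlace). The classical Lommel integral $\int_0^\lambda u J_m(u)^2\,du = (\lambda^2/2) J_m'(\lambda)^2$ valid when $J_m(\lambda)=0$ yields $\|\varphi_{\alpha n, n}\|_{L^2(\mathbb{D})}^2 = \pi J_{\alpha n-1}(\lambda_{\alpha n, n})^2$. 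Substituting into the definition of $w_n$ produces
$$
(\partial_\nu w_n)|_{\partial \mathbb{D}}(\theta) = \frac{\operatorname{sgn}\bigl(J_{\alpha n - 1}(\lambda_{\alpha n, n})\bigr)}{\sqrt{\pi}}\, e^{i \alpha n \theta},
$$
from which $\|(\partial_\nu w_n)|_{\partial \mathbb{D}}\|_{L^2(\partial \mathbb{D})} = \sqrt{2} = O(1)$ is immediate, establishing the first claim.

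Next, I would identify the semi-classical wave-front set on $\partial \mathbb{D} \simeq S^1$, parametrized by arc-length $\theta$. In the local coordinates $(y, x') = (1-r, \theta)$ of Section~\ref{geometry}, the induced Euclidean metric gives $|\xi'|_g^2 = \xi'^2$ at $y=0$, and hence $r_0(\theta, \xi') = 1 - \xi'^2$. For any tangential symbol $a \in C_c^\infty(T^* S^1)$, Fourier inversion gives the exact identity $\mathrm{Op}_{h_n}(a)\bigl(e^{i \alpha n \theta}\bigr)(\theta) = a(\theta, h_n \alpha n)\, e^{i \alpha n \theta}$. By Proposition~\ref{zeros}, $h_n \alpha n = \alpha n/\lambda_{\alpha n, n} \to \alpha/\iota(\alpha)$ as $n \to \infty$, so any symbol $a$ vanishing in a neighborhood of the level $\{\xi' = \alpha/\iota(\alpha)\}$ makes $a(\theta, h_n \alpha n) \to 0$ uniformly in $\theta$, and hence $\mathrm{Op}_{h_n}(a)\bigl((\partial_\nu w_n)|_{\partial \mathbb{D}}\bigr) \to 0$ in $L^2$. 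Therefore
$$
\mathrm{WF}_h\bigl((\partial_\nu w_n)|_{\partial \mathbb{D}}\bigr) \subset \bigl\{ (\theta, \xi') \in T^* \partial \mathbb{D} : \xi' = \alpha/\iota(\alpha) \bigr\}.
$$

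Finally, the two-sided bound of Proposition~\ref{zeros} gives
$$
\Bigl(\frac{\alpha}{\iota(\alpha)}\Bigr)^2 \in \bigl[ (1 + \beta_2 \alpha^{-2/3})^{-2},\, (1 + \beta_1 \alpha^{-2/3})^{-2}\bigr] \subset (0, 1).
$$
For $\alpha$ fixed sufficiently large, the right endpoint is bounded strictly below $1$ by a quantity of order $\alpha^{-2/3}$ and the left endpoint is bounded away from $0$; one then picks $\delta \in (0, 1/2)$ (depending on $\alpha$) so that $\delta < 1 - (\alpha/\iota(\alpha))^2 < 1 - \delta$, which places the wave-front set from the previous step inside $\mathcal{H}_\delta(\partial \mathbb{D})$. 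The argument is essentially direct computation, the only genuine input being the classical asymptotics in Proposition~\ref{zeros}; I do not anticipate any serious obstacle. The main point of care is justifying the exact action of $\mathrm{Op}_{h_n}$ on a pure Fourier mode of $S^1$, which is standard but requires either a quantization on the torus or a partition-of-unity argument to transport the computation from $\mathbb{R}$ via charts.
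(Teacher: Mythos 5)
Your proof is correct and takes a genuinely different, more explicit route than the paper's. The paper argues indirectly: it notes that $\partial_\theta w_n = i\alpha n\, w_n$ together with $|\alpha n|/\lambda_{\alpha n,n}\le 1-\delta(\alpha)$ shows $w_n=\chi(h_n\partial_\theta)w_n$ for a cutoff $\chi$ supported in $|s|<1-\epsilon_0$, so $\WFh(w_n)$ near $\partial\mathbb{D}$ lies in the hyperbolic region, and then invokes a hyperbolic trace estimate combined with the normalization $\|\nabla w_n\|_{L^2(\mathbb{D})}=1$ (which follows from the eigenvalue relation and the definition of $w_n$) to deduce $\|\partial_\nu w_n|_{\partial\mathbb{D}}\|_{L^2}=O(1)$. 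You instead produce a closed form: the recurrences~\eqref{recurrence} at a Bessel zero give $J'_{\alpha n}(\lambda_{\alpha n,n})=J_{\alpha n-1}(\lambda_{\alpha n,n})$, the Lommel integral gives $\|\varphi_{\alpha n,n}\|^2_{L^2(\mathbb{D})}=\pi J_{\alpha n-1}(\lambda_{\alpha n,n})^2$, and the normalization cancels everything so that $\partial_\nu w_n|_{\partial\mathbb{D}}=\pm\pi^{-1/2}e^{i\alpha n\theta}$ exactly, with $L^2$-norm $\sqrt 2$. This is cleaner and eliminates any appeal to trace theorems. The wave-front set conclusion is then immediate since the trace is a single Fourier mode with $h_n\cdot\alpha n\to\alpha/\iota(\alpha)\in(0,1)$ strictly, by Proposition~\ref{zeros}; any tangential symbol vanishing near $\{\xi'=\alpha/\iota(\alpha)\}$ annihilates $e^{i\alpha n\theta}$ exactly for $n$ large. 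What the paper's route buys is greater generality (it would adapt to quasi-modes on an ellipse, say, where no closed form is available, and the paper hints at this); what your route buys is transparency and an exact constant. One minor note: you should phrase the wave-front-set inclusion slightly more carefully since $h_n\alpha n$ is only asymptotically $\alpha/\iota(\alpha)$, but as you observe the symbol vanishes identically on $h_n\alpha n$ for $n$ large, so the conclusion holds. Overall this is a valid and in places sharper argument than the published one.
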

\begin{proof}
To simplify the notation, we write $m=\alpha n$ and $\iota:=\iota(\alpha).$ From Proposition \ref{zeros}, we have
$$ 1+\beta_1\alpha^{-\frac{2}{3}}-o(1)<\frac{\iota}{\alpha}-o(1)=\frac{\lambda_{m,n}}{m}<\frac{\iota}{\alpha}\leq 1+\beta_2\alpha^{-\frac{2}{3}},\text{ as }n\rightarrow\infty.
$$
Note that at $r=1$, $\partial_{\nu}=\partial_r$ and $|\nabla w|^2=|\partial_r w|^2+\frac{1}{r^2}|\partial_{\theta}w|^2$. The hyperbolicity at the boundary is essentially due to the fact that
$$ \partial_{\theta}w_n=imw_n
$$
and 
\begin{equation}\label{i}
\frac{|m|}{\lambda_{m,n}}=\frac{\alpha}{\iota}+o(1)\leq 1-\delta(\alpha)
\end{equation} for $n\gg 1$. Let $0<\epsilon_0<\delta(\alpha)$, $\chi\in C^{\infty}(\R)$ such that $\chi(s)\equiv 0$ if $|s|>1-\epsilon_0$. From \eqref{i} we have $w_n=\chi(h_n\partial_{\theta})w_n.$ Since $\partial_{\theta}^2$ is just the Laplace operator on $L^2(\partial\mathbb{D})$, we have, near $\partial\mathbb{D}$, $\mathrm{WF}_h(w_n)$ is contained in $r>\epsilon_0>0$, thus $w_n$ is microlocalized near $\mathcal{H}(\Sigma)$. The estimate $\|\partial_{r}w_n|_{r=1}\|_{L^2(\partial\mathbb{D})}=O(1)$ then follows from the hyperbolicity and the fact that $\|\nabla w_n\|_{H^1(\mathbb{D})}=1$. This completes the proof of Lemma \ref{energynorm}.  
\end{proof}

\begin{center}
	\begin{tikzpicture}[scale=0.6]
\draw (0,0) circle [radius=2.828427];
\draw[blue] (2,2) rectangle (-2,-2); 
\draw[rotate around={2:(0,0)},blue] (2,2) rectangle (-2,-2); 
\draw[rotate around={4:(0,0)},blue] (2,2) rectangle (-2,-2);
\draw[rotate around={6:(0,0)},blue] (2,2) rectangle (-2,-2); 
\draw[rotate around={8:(0,0)},blue] (2,2) rectangle (-2,-2); 
\draw[rotate around={10:(0,0)},blue] (2,2) rectangle (-2,-2); 
\draw[rotate around={12:(0,0)},blue] (2,2) rectangle (-2,-2); 
\draw[rotate around={14:(0,0)},blue] (2,2) rectangle (-2,-2); 
\draw[rotate around={16:(0,0)},blue] (2,2) rectangle (-2,-2); 
\draw[rotate around={18:(0,0)},blue] (2,2) rectangle (-2,-2); 
\draw[rotate around={20:(0,0)},blue] (2,2) rectangle (-2,-2); 
\draw[rotate around={22:(0,0)},blue] (2,2) rectangle (-2,-2);  
\draw[rotate around={24:(0,0)},blue] (2,2) rectangle (-2,-2); 
\draw[rotate around={26:(0,0)},blue] (2,2) rectangle (-2,-2); \draw[rotate around={28:(0,0)},blue] (2,2) rectangle (-2,-2); \draw[rotate around={30:(0,0)},blue] (2,2) rectangle (-2,-2); \draw[rotate around={32:(0,0)},blue] (2,2) rectangle (-2,-2); \draw[rotate around={34:(0,0)},blue] (2,2) rectangle (-2,-2); \draw[rotate around={36:(0,0)},blue] (2,2) rectangle (-2,-2); \draw[rotate around={38:(0,0)},blue] (2,2) rectangle (-2,-2); \draw[rotate around={40:(0,0)},blue] (2,2) rectangle (-2,-2); \draw[rotate around={42:(0,0)},blue] (2,2) rectangle (-2,-2); 
\draw[rotate around={44:(0,0)},blue] (2,2) rectangle (-2,-2); \draw[rotate around={46:(0,0)},blue] (2,2) rectangle (-2,-2); \draw[rotate around={48:(0,0)},blue] (2,2) rectangle (-2,-2); \draw[rotate around={50:(0,0)},blue] (2,2) rectangle (-2,-2); \draw[rotate around={52:(0,0)},blue] (2,2) rectangle (-2,-2); \draw[rotate around={54:(0,0)},blue] (2,2) rectangle (-2,-2); \draw[rotate around={56:(0,0)},blue] (2,2) rectangle (-2,-2); \draw[rotate around={58:(0,0)},blue] (2,2) rectangle (-2,-2); \draw[rotate around={60:(0,0)},blue] (2,2) rectangle (-2,-2); \draw[rotate around={62:(0,0)},blue] (2,2) rectangle (-2,-2); \draw[rotate around={64:(0,0)},blue] (2,2) rectangle (-2,-2); \draw[rotate around={68:(0,0)},blue] (2,2) rectangle (-2,-2);
\draw[black] (5,-4) node{Concentration of the eigenfunctions $\varphi_{\alpha n,n}$ as $n\rightarrow\infty$ };
\draw (8,0) circle [radius=2.828427];
\draw[green!70!black] (8,2.828427)--(5.55051, -1.41421)--
(10.4494897,-1.41421)--(8,2.828427);
\draw[rotate around={2:(8,0)} ,green!70!black] (8,2.828427)--(5.55051, -1.41421)--
(10.4494897,-1.41421)--(8,2.828427);
\draw[rotate around={4:(8,0)} ,green!70!black] (8,2.828427)--(5.55051, -1.41421)--
(10.4494897,-1.41421)--(8,2.828427);\draw[rotate around={6:(8,0)} ,green!70!black] (8,2.828427)--(5.55051, -1.41421)--
(10.4494897,-1.41421)--(8,2.828427);\draw[rotate around={8:(8,0)} ,green!70!black] (8,2.828427)--(5.55051, -1.41421)--
(10.4494897,-1.41421)--(8,2.828427);\draw[rotate around={10:(8,0)} ,green!70!black] (8,2.828427)--(5.55051, -1.41421)--
(10.4494897,-1.41421)--(8,2.828427);\draw[rotate around={12:(8,0)} ,green!70!black] (8,2.828427)--(5.55051, -1.41421)--
(10.4494897,-1.41421)--(8,2.828427);
\draw[rotate around={14:(8,0)} ,green!70!black] (8,2.828427)--(5.55051, -1.41421)--
(10.4494897,-1.41421)--(8,2.828427);\draw[rotate around={16:(8,0)} ,green!70!black] (8,2.828427)--(5.55051, -1.41421)--
(10.4494897,-1.41421)--(8,2.828427);\draw[rotate around={18:(8,0)} ,green!70!black] (8,2.828427)--(5.55051, -1.41421)--
(10.4494897,-1.41421)--(8,2.828427);\draw[rotate around={20:(8,0)} ,green!70!black] (8,2.828427)--(5.55051, -1.41421)--
(10.4494897,-1.41421)--(8,2.828427);\draw[rotate around={22:(8,0)} ,green!70!black] (8,2.828427)--(5.55051, -1.41421)--
(10.4494897,-1.41421)--(8,2.828427);\draw[rotate around={24:(8,0)} ,green!70!black] (8,2.828427)--(5.55051, -1.41421)--
(10.4494897,-1.41421)--(8,2.828427);\draw[rotate around={26:(8,0)} ,green!70!black] (8,2.828427)--(5.55051, -1.41421)--
(10.4494897,-1.41421)--(8,2.828427);\draw[rotate around={28:(8,0)} ,green!70!black] (8,2.828427)--(5.55051, -1.41421)--
(10.4494897,-1.41421)--(8,2.828427);\draw[rotate around={30:(8,0)} ,green!70!black] (8,2.828427)--(5.55051, -1.41421)--
(10.4494897,-1.41421)--(8,2.828427);\draw[rotate around={32:(8,0)} ,green!70!black] (8,2.828427)--(5.55051, -1.41421)--
(10.4494897,-1.41421)--(8,2.828427);\draw[rotate around={34:(8,0)} ,green!70!black] (8,2.828427)--(5.55051, -1.41421)--
(10.4494897,-1.41421)--(8,2.828427);\draw[rotate around={36:(8,0)} ,green!70!black] (8,2.828427)--(5.55051, -1.41421)--
(10.4494897,-1.41421)--(8,2.828427);\draw[rotate around={38:(8,0)} ,green!70!black] (8,2.828427)--(5.55051, -1.41421)--
(10.4494897,-1.41421)--(8,2.828427);\draw[rotate around={40:(8,0)} ,green!70!black] (8,2.828427)--(5.55051, -1.41421)--
(10.4494897,-1.41421)--(8,2.828427);\draw[rotate around={42:(8,0)} ,green!70!black] (8,2.828427)--(5.55051, -1.41421)--
(10.4494897,-1.41421)--(8,2.828427);\draw[rotate around={42:(8,0)} ,green!70!black] (8,2.828427)--(5.55051, -1.41421)--
(10.4494897,-1.41421)--(8,2.828427);\draw[rotate around={44:(8,0)} ,green!70!black] (8,2.828427)--(5.55051, -1.41421)--
(10.4494897,-1.41421)--(8,2.828427);\draw[rotate around={46:(8,0)} ,green!70!black] (8,2.828427)--(5.55051, -1.41421)--
(10.4494897,-1.41421)--(8,2.828427);\draw[rotate around={48:(8,0)} ,green!70!black] (8,2.828427)--(5.55051, -1.41421)--
(10.4494897,-1.41421)--(8,2.828427);\draw[rotate around={50:(8,0)} ,green!70!black] (8,2.828427)--(5.55051, -1.41421)--
(10.4494897,-1.41421)--(8,2.828427);
\draw[rotate around={52:(8,0)} ,green!70!black] (8,2.828427)--(5.55051, -1.41421)--
(10.4494897,-1.41421)--(8,2.828427);\draw[rotate around={54:(8,0)} ,green!70!black] (8,2.828427)--(5.55051, -1.41421)--
(10.4494897,-1.41421)--(8,2.828427);\draw[rotate around={56:(8,0)} ,green!70!black] (8,2.828427)--(5.55051, -1.41421)--
(10.4494897,-1.41421)--(8,2.828427);\draw[rotate around={58:(8,0)} ,green!70!black] (8,2.828427)--(5.55051, -1.41421)--
(10.4494897,-1.41421)--(8,2.828427);\draw[rotate around={60:(8,0)} ,green!70!black] (8,2.828427)--(5.55051, -1.41421)--
(10.4494897,-1.41421)--(8,2.828427);
\draw[rotate around={62:(8,0)} ,green!70!black] (8,2.828427)--(5.55051, -1.41421)--
(10.4494897,-1.41421)--(8,2.828427);\draw[rotate around={64:(8,0)} ,green!70!black] (8,2.828427)--(5.55051, -1.41421)--
(10.4494897,-1.41421)--(8,2.828427);\draw[rotate around={66:(8,0)} ,green!70!black] (8,2.828427)--(5.55051, -1.41421)--
(10.4494897,-1.41421)--(8,2.828427);\draw[rotate around={68:(8,0)} ,green!70!black] (8,2.828427)--(5.55051, -1.41421)--
(10.4494897,-1.41421)--(8,2.828427);\draw[rotate around={70:(8,0)} ,green!70!black] (8,2.828427)--(5.55051, -1.41421)--
(10.4494897,-1.41421)--(8,2.828427);
\draw[rotate around={72:(8,0)} ,green!70!black] (8,2.828427)--(5.55051, -1.41421)--
(10.4494897,-1.41421)--(8,2.828427);\draw[rotate around={74:(8,0)} ,green!70!black] (8,2.828427)--(5.55051, -1.41421)--
(10.4494897,-1.41421)--(8,2.828427);\draw[rotate around={76:(8,0)} ,green!70!black] (8,2.828427)--(5.55051, -1.41421)--
(10.4494897,-1.41421)--(8,2.828427);\draw[rotate around={78:(8,0)} ,green!70!black] (8,2.828427)--(5.55051, -1.41421)--
(10.4494897,-1.41421)--(8,2.828427);\draw[rotate around={80:(8,0)} ,green!70!black] (8,2.828427)--(5.55051, -1.41421)--
(10.4494897,-1.41421)--(8,2.828427);
\draw[rotate around={82:(8,0)} ,green!70!black] (8,2.828427)--(5.55051, -1.41421)--
(10.4494897,-1.41421)--(8,2.828427);\draw[rotate around={84:(8,0)} ,green!70!black] (8,2.828427)--(5.55051, -1.41421)--
(10.4494897,-1.41421)--(8,2.828427);\draw[rotate around={86:(8,0)} ,green!70!black] (8,2.828427)--(5.55051, -1.41421)--
(10.4494897,-1.41421)--(8,2.828427);\draw[rotate around={88:(8,0)} ,green!70!black] (8,2.828427)--(5.55051, -1.41421)--
(10.4494897,-1.41421)--(8,2.828427);

	\end{tikzpicture}
\end{center}

\subsection{Geometric optics construction}
In this part we recall the geometric optics construction for the hyperbolic boundary value problem. In the tubular neighborhood of the interface $\Sigma$, we use the geodesic normal coordinate $x=(y,x')$, such
$$ \Delta=\frac{1}{\kappa}\partial_y(\kappa\partial_y)+\frac{1}{\kappa}\partial_i(g_0^{ij}\kappa\partial_j),
$$ 
where $\kappa=\sqrt{\det(g_0)}$ and $\partial_j=\partial_{x_j'}$. The semiclassical operator
$$ P_{h}=h^2\Delta_{g_0}+1=h^2\partial_y^2+h^2g_0^{ij}\partial_i\partial_j+1+\frac{h}{\kappa}(\partial_y\kappa) h\partial_y+\frac{h}{\kappa}\partial_i(g_0^{ij}\kappa)h\partial_j.
$$ 
Let $f_0^{\pm}\in L^2(\R_{x'}^{d-1})$ such that $\mathrm{WF}_h(f_0^{\pm})$ lies in a neighborhood of $(y=0,x_0';\eta=0,\xi_0')$, such that
$$ r_0(0,x_0',\xi_0')\geq c_0>0.
$$ Denote by $\theta^{\pm}(\xi)=\mathcal{F}_h(\chi f_0^{\pm})(\xi)$, where $\chi\in C_c^{\infty}(\R_{x'}^{d-1})$, supported near $x_0'$.

Consider the semi-classical Fourier integral operators $U^{\pm}$, represented by
$$ U^{\pm}(\chi f_0^{\pm})(y,x')=\frac{1}{(2\pi h)^{d-1}}\int_{\R^{d-1}} \mathrm{e}^{\frac{i}{h}\varphi^{\pm}(y,x',\xi')}b^{\pm}(y,x',\xi')\theta^{\pm}(\xi')d\xi'.
$$
We have
\begin{align*}
P_h(U^{\pm}(\chi f_0^{\pm}))=\frac{1}{(2\pi h)^{d-1}}\int_{\R^{d-1}} (h^2\Delta_g+1)(\mathrm{e}^{\frac{i\varphi^{\pm}}{h}} b^{\pm})\theta^{\pm}(\xi')d\xi'.
\end{align*}
Observing that
\begin{align*}
(h^2\Delta_{g_0}+1)(\mathrm{e}^{\frac{i\varphi^{\pm}}{h}} b^{\pm})=&(1-|\nabla_{g_0}\varphi^{\pm}|^2)b^{\pm}\mathrm{e}^{\frac{i\varphi^{\pm}}{h}}
+ih(2\nabla_{g_0}\varphi^{\pm}\cdot\nabla_{g_0}b^{\pm}+\Delta_{g_0}\varphi^{\pm}b^{\pm}  )\mathrm{e}^{\frac{i\varphi^{\pm}}{h}}\\
+& h^2(\Delta_{g_0}b^{\pm})\mathrm{e}^{\frac{i\varphi^{\pm}}{h}}.
\end{align*}
Near $\mathrm{WF}_h(f_0)$ and for small $y$, we can solve the eikonal equation
\begin{equation}\label{eikonal}  1-|\nabla_{g_0}\varphi^{\pm}|^2=0,\quad \varphi^{\pm}|_{y=0}(x',\xi')=x'\cdot\xi'.
\end{equation}
Note that
$ |\nabla_{g_0}\varphi^{\pm}|^2=|\partial_y\varphi^{\pm}|^2+g_0^{jk}\partial_j\varphi^{\pm}\partial_k\varphi^{\pm}.
$ Near $(x_0',\xi_0')\in\mathcal{H}(\Sigma)$, for each fixed $\xi'$, we find a Lagrangian submanifold of $T^*\Sigma$, locally of the form
$$ \mathcal{L}_{0,\xi'}:=\{(x',\xi=\partial_{x'}\varphi_0(x,\xi')): \varphi_0(x',\xi')=x'\cdot\xi' \}.
$$
At each point $(x',\xi=\xi')\in\mathcal{L}_{0,\xi'}$, there are two distinct roots $\eta^{\pm}$ of the equation
$$ \eta^2+g_0^{jk}\xi_j'\xi_k'=1,
$$
and each root determines a flow $\Phi_y^{\pm}$ of the bicharacteristics $p=\eta^2-r(y,x',\xi')$ on $\{p=0\}$. Then we can define the Lagrangian
$ \mathcal{L}_{y,\xi'}^{\pm}:=\exp(\Phi_y^{\pm})(\mathcal{L}_{0,\xi'}) 
$ locally, which is again a Lagrangian of $T^*\Sigma$ (viewing $y$ as a parameter) and can be written locally as $\mathcal{L}_{y,\xi'}^{\pm}=\{(x',\partial_{x'}\varphi^{\pm})\}$. Then $\varphi^{\pm}$ is the desired solutions of \eqref{eikonal} with the property
$$ \partial_y\varphi^{+}+\partial_y\varphi^{-}=0,\text{ at }y=0.
$$
Next we set
$$ b^{\pm}(y,x',\xi')=\sum_{j=0}^Nh^jb_j^{\pm}(y,x',\xi'),
$$
with coefficients $b_j$ 
solving the transport equations
\begin{equation}\label{transport}
\begin{split} 
 &2\partial_y\varphi^{\pm}\partial_y b_0^{\pm}+g_0^{jk}\partial_j\varphi^{\pm}\partial_kb_0^{\pm}+(\Delta_{g_0}\varphi^{\pm})b_0^{\pm}=0,\\
 &2i\partial_y\varphi^{\pm}\partial_y b_j^{\pm}+ig_0^{jk}\partial_j\varphi^{\pm}\partial_kb_j^{\pm}+i(\Delta_{g_0}\varphi^{\pm})b_{j}^{\pm}+\Delta_{g_0}b_{j-1}^{\pm}=0,\; 1\leq j\leq N.
 \end{split}
\end{equation}
Then 
$$ P_h(\chi f_0^{\pm})=\frac{h^{N+2}}{(2\pi h)^{d-1}}\int_{\R^{d-1}}\mathrm{e}^{\frac{i\varphi^{\pm}}{h}}\Delta_{g_0}b_N^{\pm}(y,x',\xi')\theta^{\pm}(\xi')d\xi'=O_{L^2}(h^{N+2}).
$$
To determine the datum $b_j^{\pm}|_{y=0}$, we need the boundary conditions. Note that 
 the approximate quasi-mode is given by
$$ u_h=\sum_{\pm} U^{\pm}(\chi f_0^{\pm})
$$ 
and we want to determine $f_0^{\pm}$.

Denote by  $B_h^{\pm}=\mathrm{Op}_h(b^{\pm})$ and $B_h^{0,\pm}=\mathrm{Op}_h(b^{\pm}|_{y=0})$,
then the Dirichlet trace is given by
$$ B_h^{0,+}(\chi f_0^{+})+B_h^{0,-}(\chi f_0^{-}),
$$
and the Neumann trace is given by 
$$ \sum_{\pm}\pm\mathrm{Op}_h(\sqrt{r_0}b^{\pm}|_{y=0} )(\chi f_0^{\pm})+h\sum_{\pm}\mathrm{Op}_h(\partial_yb^{\pm}|_{y=0} )|_{y=0}(\chi f_0^{\pm}).
$$
Now we choose $b_0^{+}|_{y=0}=b_0^-|_{y=0}=\chi(x')\psi(\xi')$ localized near $(x_0',\xi_0')$ and $b_j^{\pm}|_{y=0}=0$ for all $1\leq j\leq N$. Then the symbol (matrix-valued)
\begin{align*}
\Theta=\Theta_0+h
\left(\begin{matrix}
0  &0\\
\partial_yb_0^+ &\partial_yb_0^-
\end{matrix}
\right)|_{y=0}
\end{align*}
with
$$\Theta_0:=\left( 
\begin{matrix}
b_0^+  &b_0^-\\
\sqrt{r_0}b_0^+ &-\sqrt{r_0}b_0^-
\end{matrix}\right)|_{y=0}
$$
is invertible. For such an elliptic system, we can construct a symbol (matrix-valued) $\Upsilon$, such that
$$ \mathrm{Op}_h(\Theta)\mathrm{Op}_h(\Upsilon)=\mathrm{Id}+\mathcal{O}_{H^s\rightarrow H^{s+m}}(h^{N+1-m}).
$$
In particular, for a given Dirichlet trace $\sigma_{Dir}$ and Neumann trace $\sigma_{Neu}$ with wave front sets located near $(x_0',\xi_0')$, we find
$$\binom{\chi f_0^+}{\chi f_0^-}=\chi\mathrm{Op}_h(\Upsilon)\chi\binom{\sigma_{Dir}}{\sigma_{Neu}}.
$$
Then microlocally near $(x_0',\xi_0')\in\mathcal{H}(\Sigma)$, $u_h$ satisfies
$$ (h^2\Delta_{g_0}+1)u_h=O_{L^2}(h^N),\quad u|_{h=0}=\sigma_{Dir}+O_{H^{\frac{1}{2}}}(h^{N}),\; h\partial_yu_h|_{y=0}=\sigma_{Neu}+O_{H^{-\frac{1}{2}}}(h^N),
$$
and microlocally near $(x_0',\xi_0')$, $u_h=O_{L^2}(1)$,$\mathrm{WF}_h(u_h)$ lies in a small neighborhood of $(x_0',\xi_0')$. 
Finally, due to the microlocalisation in the hyperbolic region, we can exchange in the error terms powers of $h$ against derivatives, leading to 
$$ (h^2\Delta_{g_0}+1)u_h=O_{H^{k}}(h^{N-k}),\; u|_{h=0}=\sigma_{Dir}+O_{H^{\frac{1}{2} +k }}(h^{N-k }),\; h\partial_yu_h|_{y=0}=\sigma_{Neu}+O_{H^{-\frac{1}{2} + k }}(h^{N-k}).
$$




\end{document}